\documentclass[11pt]{amsart}
\usepackage {enumerate}
\usepackage{setspace}
\usepackage{caption}
\usepackage{mathrsfs}
\usepackage{hyperref}
\usepackage{esint}
\usepackage{amssymb}
\usepackage{amsmath}
\usepackage{stmaryrd}
\usepackage{graphicx}
\usepackage{enumitem}
\usepackage{comment}
\usepackage{xcolor}
\usepackage{lipsum}

\usepackage{geometry}    
\newtheorem{theorem}{Theorem}[section]
\newtheorem{lemma}[theorem]{Lemma}
\newtheorem{question}[theorem]{Question}
\newtheorem{corollary}[theorem]{Corollary}
\newtheorem{proposition}[theorem]{Proposition}

\numberwithin{equation}{section}

\theoremstyle {definition}
\newtheorem{definition}[theorem]{Definition}
\newtheorem{remark}[theorem]{Remark}

\DeclareMathOperator{\Ric}{Ric}
\DeclareMathOperator{\Rm}{Rm}
\DeclareMathOperator{\tr}{tr}
\DeclareMathOperator{\Div}{div}

\DeclareMathOperator{\diag}{diag}

\DeclareMathOperator{\Span}{span}
\DeclareMathOperator{\secf}{\mathrm {I\!I}}

\begin{document}
\title[Metric extension problem]{The metric extension problem under positive and negative curvature conditions}

\author{Wenlong Wang}
\address{School of Mathematical Sciences and LPMC, Nankai University, Tianjin, 300071, People's Republic of China}
\email{wangwl@nankai.edu.cn}
\author{Jintian Zhu}
\address{Institute for Theoretical Sciences, Westlake University, 600 Dunyu Road, 310030, Hangzhou, Zhejiang, People's Republic of China}
\email{zhujintian@westlake.edu.cn}

\begin{abstract}
We first prove that every smooth boundary metric on a compact manifold extends to a metric with any prescribed positive lower bound for Ricci curvature, whereas extensions under stronger positive \(k^{\mathrm{th}}\)-intermediate Ricci curvature conditions may fail due to local obstructions. For negative sectional curvature, we identify a global obstruction to extension. We then consider the class of compact manifolds defined by the existence of a metric with negative sectional curvature and boundary index at most one. On every manifold in this class, we prove that any smooth boundary metric extends to a metric with negative sectional curvature and strictly convex umbilical boundary. We further show that every such initial metric admits a complete asymptotically hyperbolic isometric extension with the same curvature condition and any prescribed conformal infinity. This class of manifolds is closed under boundary connected sums. As a geometric application of these extension results, every smooth metric on \(\mathbb S^n\) admits a strictly convex isometric embedding into \(\mathbb R^{n+1}\) equipped with a complete metric of negative sectional curvature. The proofs combine neck constructions with corner smoothing for upper curvature bounds.

\end{abstract}

\subjclass[2020]{Primary 53C21; Secondary 53C20}

\maketitle

\tableofcontents

\section{Introduction}

A central problem in Riemannian geometry is to construct metrics on a manifold with prescribed curvature properties, or to identify topological obstructions to their existence. For complete Riemannian manifolds without boundary, global curvature conditions often impose strong geometric and topological restrictions, as illustrated by the Bonnet--Myers, splitting, soul, and Cartan--Hadamard theorems.

In sharp contrast, for compact manifolds with boundary, when no boundary conditions are imposed, the situation is remarkably flexible. By a result of Gromov \cite{Gro1969} based on his $h$-principle, every open $n$-manifold $(n\geq 2)$ admits (possibly incomplete) Riemannian metrics of positive as well as metrics of negative sectional curvature. Streil \cite{Streil2017} strengthened this result by proving the existence of metrics with sectional curvature in any prescribed open interval.

This flexibility suggests imposing boundary conditions on compact manifolds with boundary. A natural choice is to prescribe the boundary metric together with either the full second fundamental form or only part of it, such as the mean curvature. Finding Riemannian metrics on a fixed compact manifold with boundary that realize the prescribed boundary data and satisfy a global curvature condition is referred to as the \emph{extension problem}. A less restrictive variant is the \emph{fill-in problem}, in which the boundary data are fixed while the interior topology is allowed to vary; see \cite[Section 5$\frac{5}{7}$]{Gro1996} for motivation.

In recent years, Gromov has highlighted the extension and fill-in problems in the setting of nonnegative scalar curvature; see \cite{Gro2019} and \cite[Section 3.12]{Gro2023}. In addition to their intrinsic geometric interest, these problems are closely intertwined with quasi-local mass and positive mass theorems; see, for example, \cite{BrendleH,CLSZ,FHH,J2013,MM2017,ST2002,ST2007,SWW2022,Wang Y}.

Against this background, it is natural to begin with the following basic {\it metric extension problem}:
\begin{question}\label{metric extension problem}
Let $M$ be a compact manifold with boundary, and let $\gamma$ be a smooth Riemannian metric on $\partial M$. Does $\gamma$ always extend to a smooth Riemannian metric $g$ on $M$ satisfying a prescribed curvature condition? 
\end{question}

This question may be viewed as asking to what extent the \(h\)-principle-type flexibility exhibited by curvature conditions on open manifolds persists when the boundary metric is prescribed. In the case of positive scalar curvature, the problem was posed by Gromov and answered affirmatively by Shi, Wei, and the first-named author \cite{SWW2022}:

\begin{theorem}[Shi--Wang--Wei \cite{SWW2022}]\label{PSCextension}
Let $M$ be a compact manifold with boundary. Then every smooth Riemannian metric $\gamma$ on $\partial M$ can be extended to a smooth Riemannian metric $g$ on $M$ with positive scalar curvature. 
\end{theorem}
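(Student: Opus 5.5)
The idea is to build the metric in three stages: a collar near $\partial M$ realizing $\gamma$ with positive scalar curvature, a product metric with positive scalar curvature on the rest of $M$, and a transition region joining the two.

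\emph{Collar.} Fix a collar $\partial M\times[0,1]\subset M$ with $\partial M=\{t=0\}$. On this collar take a warped metric
\[
g_0 = dt^2 + f(t)^2\gamma ,
\]
where $f(0)=1$, so that $g_0$ restricts to $\gamma$ on the boundary. For $n=\dim M$, the scalar curvature of this warped product is
\[
R = f^{-2}R_\gamma - 2(n-1)\frac{f''}{f} - (n-1)(n-2)\frac{(f')^2}{f^2}.
\]
We choose $f$ strictly concave, for instance $f(t)=1-At^2$ with $A$ large, shrinking quickly toward a small value. Then $-f''/f$ is large and positive and dominates both $R_\gamma$ (bounded, since $\partial M$ is compact) and the gradient term. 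This needs $|f'|$ to stay comparable to $\sqrt{|f''|\,f}$, which holds on a short interval. The result is $R>0$ on $\partial M\times[0,t_0]$.

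\emph{Interior.} Next we reduce to a cylindrical end. At $t=t_0$ we want to glue to a product metric $dt^2+\epsilon^2\gamma'$ whose cross-section $\epsilon^2\gamma'$ has positive scalar curvature. In general $\partial M$ need not admit a psc metric, so a pure product will not work. Instead we use the fact that any metric on a compact manifold becomes arbitrarily scalar-positive after adding a Gaussian-type bend in extra dimensions. The cleaner route, following Gromov--Lawson style surgery, is to observe that $M$ with a collar removed is a compact manifold with boundary, and an open manifold's double admits psc locally. More concretely:
\begin{enumerate}
\item Use Gromov's $h$-principle (cited above) to put a metric $h$ of positive sectional (hence scalar) curvature on the open manifold $M\setminus\partial M$.
\item Compactness gives a psc metric on $M\setminus(\partial M\times[0,t_0))$, with some boundary metric $h|_{t=t_0}$.
\item Interpolate between $f(t_0)^2\gamma$ and the slice metrics of $h$ by a path of metrics $\gamma_s$ on $\partial M$, stretched over a long $t$-interval so that $s$-derivatives are small. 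Keep a strongly concave warping factor throughout, so that the $-f''/f$ term absorbs the bounded negative contributions from $R_{\gamma_s}$ and from the small $s$-derivatives.
\end{enumerate}
This is the standard flexibility of psc under concave warping, the same mechanism as in the collar step.

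\emph{Main obstacle.} The gluing in step 3 is the hard part. The concave warping must persist long enough to dominate curvature, but concavity forces $f$ toward zero in finite time. The fix I would try first is to scale the interior metric $h$ down by a tiny constant, making it very psc with a very small boundary metric. Then $f$ only needs to decrease to that small size, which a concave profile reaches within bounded length, and the interpolation is done at that small scale. To finish, the corners are smoothed so that the pieces match to all orders; for example, by choosing $f$ to be flat near $t_0$ and absorbing the loss into positivity margins.
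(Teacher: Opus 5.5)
\textbf{The gap is at the junction in step 3.} Your collar computation is correct, and taking a psc metric $h$ on the interior is fine. But step 3 cannot be repaired within your scheme, because your neck runs in the wrong direction.

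In the collar $dt^2+f(t)^2\gamma$ with $f'<0$ on $(0,t_0]$, consider the slice $\{t=s\}$ with the normal $\partial_t$ pointing into $\{t\ge s\}$. Its second fundamental form is $-ff'\gamma>0$. So if your construction closed up, then for every compact $M$ the region $\{t\ge s\}\cong M$ would carry a psc metric with strictly convex, in particular strictly mean-convex, boundary. This is false in general:
\begin{itemize}
\item For an annulus, Gauss--Bonnet gives $\int K\,dA+\int k_g\,ds=2\pi\chi=0$, which is impossible with $K>0$ and $k_g>0$.
\item For $M=T^n\setminus B^n$ with $n\ge 3$, doubling and smoothing the corner by the mean-curvature criterion (Miao, Gromov--Lawson) would give a psc metric on $T^n\#T^n$, which admits none.
\end{itemize}

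Locally, the failure sits exactly at the corner. At the inner end of your neck the slices are small and, seen from the neck side, totally geodesic or concave, since $f'\le 0$ there. Gluing therefore requires $\partial M_{t_0}$ to be mean-convex in $h$. Nothing gives you that, and scaling $h$ down does not change the sign of its boundary mean curvature.

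A jump of the wrong sign cannot be "absorbed into positivity margins". Mollifying $dt^2+\gamma(t)$ over a layer of width $\delta$ contributes a term of order $\delta^{-1}\tr(\secf^++\secf^-)$ to the scalar curvature, which is hugely negative when the sum is negative. Matching to all orders instead would force the neck to end in $h$'s own normal-coordinate family. The obstruction above shows that no psc transition can achieve this in general.

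\textbf{The missing idea is to reverse the neck,} as in the boundary-replacement argument of the paper.
\begin{itemize}
\item Scale the interior psc metric \emph{up}, not down, until its boundary metric $\gamma_0$ satisfies $\gamma_0>\gamma$. The sign of scalar curvature is scale invariant, so for psc no Nash--Kuiper stretching is needed.
\item Attach $\partial M\times[0,1]$ with the metric $u(t)^2dt^2+\gamma_t$, where $\gamma_t=(1-t)\gamma_0+t\gamma$. Its slices shrink toward the new boundary $\{t=1\}$.
\item Choose $u$ small with $u'\le -C(u+u^3)$, e.g.\ $u=\varepsilon e^{-2Ct}$. Then the positive term $u'u^{-3}\tr_{\gamma_t}\gamma_t'$ dominates and gives psc (even $\Ric>0$, by Lemma~\ref{Lem:PRicciCcobordismlm}).
\item At the junction, $\secf^+=-\frac{1}{2u(0)}\gamma_0'$ is as large and positive as you like. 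Hence $\secf^++\secf^->0$ regardless of the boundary geometry of the interior metric, and Miao's or Perelman's smoothing applies.
\end{itemize}

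This also removes your worry that concavity drives $f$ to zero. In arclength $\tau=\int u\,dt$ the neck is very short, and its slices grow toward the interior, so concavity only forces the slope to decrease. The price is that the final boundary is concave, which the obstruction above shows is unavoidable in general.
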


Using this theorem, Miao \cite{Miao2021} ruled out nonnegative scalar curvature fill-ins with sufficiently large boundary mean curvature, thereby resolving the non-fill-in problem formulated by Gromov (see \cite[Question $ \mathrm A_1$]{Gro2019}). Gromov further strengthened the extension theorem by allowing any prescribed lower bound on scalar curvature and extending the result to manifolds with corners (see \cite[Section 3.12]{Gro2023}).

Motivated by the extension theorem for positive scalar curvature and its applications, we study the metric extension problem for the following curvature conditions:
\begin{itemize}[left=0.5cm]
    \item positive Ricci curvature;
    \item positive $k^{\rm th}$-intermediate Ricci curvature;
    \item negative sectional curvature and negative curvature operator.
\end{itemize}

\subsection{Metric extension problem for positive curvature conditions}
Our first result shows that every boundary metric admits an extension with positive Ricci curvature. In fact, one can even prescribe an arbitrary positive lower bound:
\begin{theorem}\label{Thm:main 1}
Let \(M^n\) be a compact manifold with boundary, where \(n\geq2\). Given any \(K>0\), every smooth Riemannian metric \(\gamma\) on \(\partial M\) can be extended to a smooth Riemannian metric \(g\) on \(M\) such that \(\Ric^g>Kg\).
\end{theorem}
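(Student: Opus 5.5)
We split the problem in two. First we use Gromov's $h$-principle result (or Streil's refinement) to get a metric $g_0$ on $M$ with $\Ric^{g_0}>Kg_0$; its boundary metric $g_0|_{\partial M}$ need not equal $\gamma$. Then we attach a collar $\partial M\times[0,1]$ carrying a metric with $\Ric>K$ that interpolates from $\gamma$ at the outer end to $g_0|_{\partial M}$ at the inner end. Finally we glue this collar to $(M,g_0)$. Since $M\cup(\partial M\times[0,1])\cong M$, this gives the extension. There is a small subtlety: Gromov's theorem needs an open manifold, so we apply it to $M\cup(\partial M\times[0,\epsilon))$, rescale so that $\Ric>K$ holds on a compact piece diffeomorphic to $M$, and restrict.

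The collar metric will be a warped-type product
\[
h=dt^2+\phi(t)^2\,\sigma_t ,\qquad t\in[0,L],
\]
where $\sigma_t$ is a path of metrics on $\partial M$ from $\gamma$ (at $t=0$, with $\phi(0)=1$) to a rescaled copy of $g_0|_{\partial M}$. Take $\phi$ to be a piece of $\sin$-type profile, e.g.\ $\phi(t)=\lambda\sin(\sqrt{c}\,(t+t_0))$ with $c$ large, so that $\phi''/\phi=-c$. The Ricci formulas for $dt^2+\phi^2\sigma$ then give
\[
\Ric(\partial_t,\partial_t)\approx -(n-1)\phi''/\phi=(n-1)c,
\]
while the tangential Ricci curvature is
\[
\Ric^{\sigma}/\phi^2-\bigl(\phi''/\phi+(n-2)\phi'^2/\phi^2\bigr).
\]
We make $\phi$ shrink toward a tip, so that $\Ric^\sigma/\phi^2$ may be negative but is multiplied by $1/\phi^2$. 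This is only controllable if $\phi'^2$ is not large compared with $|\Ric^\sigma|$.

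This reverses, so the cleaner construction is the cone-tip trick: choose $\phi$ so that $\phi'^2<1$ is \emph{not} needed. We put the large positive term into $-\phi''/\phi$. With $\phi$ concave and $\phi'$ close to $0$ (a slowly varying, very concave profile), we get $-\phi''/\phi\gg |\Ric^{\sigma}|\phi^{-2}+(n-2)\phi'^2/\phi^2$, so every Ricci component exceeds $K$. The $t$-derivatives of $\sigma_t$ add error terms of order $|\dot\sigma|,|\ddot\sigma|$. We keep these negligible by making the interpolation in $\sigma$ very slow relative to the concavity. The concavity can be sustained only over a $t$-interval of length about $\pi/\sqrt c$, so we trade the size of $c$ against the interval length. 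The key normalization is to first scale everything so that $\gamma$ and $g_0|_{\partial M}$ have huge diameter (curvatures of $\sigma$ tiny), which gives room. At the end we rescale by a constant; that multiplies $\Ric$ by a factor, and we fix this by choosing the initial $K$ bigger. The boundary metric is pinned down by first rescaling $g_0$ to match the scale of $\gamma$ on the inner side.

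The main obstacle is the gluing at the inner end. There the collar must match $(M,g_0)$ to second order, or at least glue with a favorable corner. Under Ricci lower bounds, the gluing criterion (Perelman-type smoothing) requires the second fundamental forms to satisfy $\secf_{\text{collar}}\ge -\secf_{g_0}$ in the right orientation. The collar's inner boundary has second fundamental form $\phi'/\phi\,\sigma$, which we can make arbitrarily convex or concave by choosing $\phi'$ there. So we choose $\phi'$ large at the junction (a short transition) so that the mean-convexity condition dominates the fixed $\secf_{g_0}$. We then invoke a corner-smoothing lemma for $\Ric>K$ across a hypersurface where the sum of the second fundamental forms is positive definite. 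I expect that balancing this large $\phi'$ at the junction against the requirement $\phi'$ small in the bulk of the collar is the delicate quantitative step. I would handle it with a two-stage profile: a short, sharply bending segment near the junction, where the $-\phi''/\phi$ term is very large, followed by the slow interpolation region.
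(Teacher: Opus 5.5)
\textbf{There is a genuine gap.} Nothing in your plan makes the boundary metric of the initial metric $g_0$ large compared with $\gamma$, and no collar $dt^2+\phi(t)^2\sigma_t$ with $\Ric>K$ and a Perelman-gluable junction can bridge the two without this.

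To see why, let $s$ be arclength measured outward from the junction, and let $H=\partial_s\log(\text{volume density of the slices})$. Then the collar's second fundamental form at the junction, taken with respect to the normal pointing into the collar, has trace $-H(0)$. Tracing the Riccati equation gives
\[
H'=-|S|^2-\Ric(\partial_s,\partial_s)\le -K .
\]
Perelman's condition $\secf^++\secf_{g_0}>0$ forces $H(0)<\tr\secf_{g_0}$. Hence, along each normal line, the logarithm of the volume density grows by at most $\int_0^\infty\bigl(H(0)-Ks\bigr)_+\,ds\le(\tr\secf_{g_0})_+^2/(2K)$. In particular, wherever $\tr\secf_{g_0}\le 0$, the slices strictly shrink outward. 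So the volume density of $\gamma$ is at most that of $g_0|_{\partial M}$ times a constant depending only on $g_0$ and $K$. This fails for large $\gamma$ unless $g_0$ is chosen depending on $\gamma$.

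Rescaling cannot repair this. The problem is invariant under $(g,\gamma,K)\mapsto(\mu^2g,\mu^2\gamma,\mu^{-2}K)$, so scaling everything to huge diameter gains nothing. Enlarging $g_0$ alone lowers its Ricci bound to $K/\mu^2$. Starting with a bigger $K$ just yields another $h$-principle metric whose boundary metric is again uncontrolled.

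The missing ingredient is Lemma~\ref{lem:large-boundary-metric}. Take Streil's metric $\bar g$ with $\Ric>K\bar g$ on an open extension $N$. Use Gromov's $C^1$-fibration theorem (Nash--Kuiper stretching) along the increasing path $F_0^*\bar g+t\gamma$ to isotope $\partial M$ inside $N$ to an embedding whose induced metric exceeds $\gamma$. Then apply isotopy extension and pull back. This gives $g_-$ with $\Ric^{g_-}>Kg_-$ and $g_-|_{\partial M}>\gamma$ as tensors.

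\textbf{The two-stage profile is also internally inconsistent.} Take $t$ increasing toward $M$. A junction convex from the collar side needs $\phi'(L)/\phi(L)$ large and positive. If $\phi$ is concave, then $\phi'\ge\phi'(L)$ on all of $[0,L]$, so $\phi'$ cannot be near $0$ in the bulk. Conversely, a transition from $\phi'\approx 0$ to large $\phi'$ requires $\phi''>0$, and there $\Ric(\partial_t,\partial_t)\approx-(n-1)\phi''/\phi$ is very \emph{negative}, not very positive as you claim. Moreover, the concave piece has length $O(c^{-1/2})$, so $\ddot\sigma$ is of order $c$ times the relative change from $\gamma$ to $g_0|_{\partial M}$. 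That is comparable to the main term, so no slow interpolation is available.

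Once $\gamma_0:=g_-|_{\partial M}>\gamma$, the paper avoids all of this with the quasi-spherical neck $u^2dt^2+\gamma_t$, where $\gamma_t=(1-t)\gamma_0+t\gamma$ and hence $\gamma_t'\le-\lambda\gamma_t$. The term $\frac{u'}{2u^3}\gamma_t'$ is positive definite of size $\gtrsim\lambda(-u'/u)u^{-2}$. With $u=\varepsilon e^{-2Ct}$ it dominates all error terms, which are $O(u^{-2})$ with fixed constants, without any slowness. Meanwhile $\secf^+=-\gamma_0'/(2u(0))\ge\frac{\lambda}{2u(0)}\gamma_0$ can be made as large as Perelman's gluing requires.
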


In general, one cannot further require $\partial M$ to be strictly convex. For instance, a compact $3$-manifold with nonnegative Ricci curvature and strictly convex boundary must be diffeomorphic to the $3$-ball (see \cite{FL}).

The passage from positive scalar to positive Ricci curvature naturally raises the question whether similar extension results hold for stronger positive curvature conditions. However, our results show that, within the hierarchy of \(k^{\rm th}\)-intermediate Ricci curvatures, positive Ricci curvature is in general the strongest condition for which such an extension theorem can hold. We recall the definition of \(k^{\rm th}\)-intermediate Ricci curvature:

\begin{definition}\label{k-Ricci}
Let $(M^n,g)$ be a Riemannian manifold. For a point $p\in M$ and an orthonormal set $\{v_0;v_1,\ldots,v_k\}\subset T_pM$, the $k^{\rm th}$-intermediate Ricci curvature of $g$ at $p$ is defined by
$$\Ric^g_k(v_0;v_1,\ldots,v_k)=\sum_{i=1}^k\sec^g(v_0\wedge v_i),$$
where $\sec^g$ denotes the sectional curvature of $g$. For $\sigma\in\mathbb R$, we write $\Ric^g_k>\sigma$ if $\Ric^g_k(v_0;v_1,\ldots,v_k)>\sigma$ for all $p\in M$ and all orthonormal sets $\{v_0;v_1,\ldots,v_k\}\subset T_pM$. The condition $\Ric^g_k<\sigma$ is defined analogously. 
\end{definition}

The intermediate Ricci curvatures interpolate between sectional and Ricci curvature: $
\Ric^g_1\left(v_0;v_1\right)=\sec^g\left(v_0\wedge v_1\right)$, $\Ric^g_{n-1}\left(v_0;v_1,\ldots,v_{n-1}\right)=\Ric^g\left(v_0,v_0\right)$.
Moreover, $\Ric^g_k>0$ implies $\Ric^g_l>0$ for all $l \geq k$. For a comprehensive list of references, see Mouill\'e’s online bibliography \cite{Mou2022}.

Beyond the Ricci case, extensions with $\Ric_k>0$ can fail due to purely {\it local} obstructions, such as insufficient positivity of the boundary curvature. This is made precise in the following theorem: 

\begin{theorem}\label{Thm: obstruction Ric_k}
Let $M$ be an $n$-dimensional compact manifold with boundary, and let $\sigma$ be a constant. Then the following hold:
\begin{itemize}[left=0.5cm]
\item[(i)]  For $n\geq 4$ and $k\leq n-3$, if $\gamma$ is a smooth metric on $\partial M$ with $\Ric_k^\gamma\leq \sigma$ at some point, then there is no smooth metric $g$ on any collar neighborhood of $\partial M$ such that $\Ric_k^g>\sigma$ and $g|_{\partial M}=\gamma$.
\item[(ii)] For $n\geq 3$, if $\gamma$ is a smooth metric on $\partial M$ with $\Ric^\gamma\leq \sigma$, and there exists a smooth metric $g$ on a collar neighborhood of $\partial M$ such that $\Ric_{n-2}^g>\sigma$ and $g|_{\partial M}=\gamma$, then the boundary tangent bundle $T(\partial M)$ splits as a direct sum of two nontrivial subbundles.
\end{itemize}
\end{theorem}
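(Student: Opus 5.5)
The argument rests on the Gauss equation at a boundary point. Let $\nu$ be the unit normal and $h$ the second fundamental form of $\partial M$ in $(M,g)$. For orthonormal $e_i,e_j$ tangent to $\partial M$,
\[
\sec^g(e_i\wedge e_j)=\sec^\gamma(e_i\wedge e_j)-h(e_i,e_i)h(e_j,e_j)+h(e_i,e_j)^2 .
\]
Fix a boundary point $p$. The idea is to choose the orthonormal frame $\{v_0;v_1,\dots,v_k\}\subset T_p\partial M$ so that the extrinsic correction in $\Ric_k^g$ is nonpositive. Then $\Ric_k^g(v_0;\dots)\le \Ric_k^\gamma(v_0;\dots)$ on the relevant frame, and this contradicts the hypotheses.

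\textbf{Part (i).} Diagonalize $h$ at $p$ with eigenvectors $e_1,\dots,e_{n-1}$ and eigenvalues $\lambda_1,\dots,\lambda_{n-1}$. On a frame of eigenvectors the formula reads $\Ric_k^g(e_0;e_1,\dots,e_k)=\Ric_k^\gamma(e_0;\dots)-\lambda_0\sum_{i=1}^k\lambda_i$. The difficulty is that the hypothesis gives a frame $\{w_0;\dots,w_k\}$ with $\Ric_k^\gamma\le\sigma$ that need not be adapted to $h$.

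The plan is to use the freedom $k\le n-3$. The subspace $W=\Span\{w_0,\dots,w_k\}$ has dimension $k+1\le n-2$, while $T_p\partial M$ has dimension $n-1$. I would like to modify the frame, for example by choosing $w_0$ as an isotropic direction of $h$ restricted to a suitable subspace, so that the correction term
\[
\sum_{i}\bigl(h(w_0,w_i)^2-h(w_0,w_0)h(w_i,w_i)\bigr)
\]
is $\le 0$ without changing $\Ric_k^\gamma$. Changing the frame does change $\Ric_k^\gamma$, so this alone is not enough.

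A cleaner alternative is to average over frames. Fix $w_0$ and average $\Ric_k$ over $k$-tuples from the orthogonal complement $V$ of $w_0$ inside an $(m+1)$-dimensional subspace containing $W$, with $m\ge k+1$ using the extra dimensions. This is where I would expect the real subtlety, since averaging turns the condition into a Ricci-type one and loses minimality.

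So I would instead argue by contradiction directly. Suppose $\Ric_k^g>\sigma\ge\Ric_k^\gamma(w_0;w_1,\dots,w_k)$. Then the extrinsic term
\[
E(w_0;w_i)=\sum_i\bigl(h(w_0,w_i)^2-h(w_0,w_0)h(w_i,w_i)\bigr)
\]
must be $>0$. Applying the same reasoning with the unit normal $\nu$ used as one of the directions gives information about $R(\nu,\cdot,\cdot,\nu)$, but that term is unconstrained.

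The key step is then to exploit the two spare tangent directions $u_1,u_2\perp W$, which exist because $k+1\le n-3<n-1$. Replacing $w_0$ by $\cos\theta\, w_0+\sin\theta\, u$ and using continuity over the sphere of the span, I would show that either the intrinsic minimum is attained on a frame with nonpositive correction, or the positivity of the correction forces $h(w_0,w_0)$ and $h(u,u)$ to have opposite signs in a way that is incompatible. Concretely, I would minimize $\Ric_k^\gamma-\sigma$ over the frame bundle and use the first and second variation in the spare directions. I expect this to be the main obstacle.

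\textbf{Part (ii).} Now $\Ric^\gamma\le\sigma$ everywhere and $\Ric_{n-2}^g>\sigma$. Taking $v_0=e$ tangent and $v_1,\dots,v_{n-2}$ an orthonormal basis of $e^\perp\cap T\partial M$, Gauss gives
\[
\Ric_{n-2}^g=\Ric^\gamma(e,e)-H\,h(e,e)+|h(e,\cdot)|^2 .
\]
Hence $|h(e,\cdot)|^2-H\,h(e,e)>0$ for every unit tangent $e$. Evaluating at an eigenvector $e_j$ gives $\lambda_j(\lambda_j-H)>0$, so $\lambda_j\notin[\min(0,H),\max(0,H)]$. In particular $H\ne 0$, say $H>0$ after choosing the orientation.

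Each eigenvalue then lies either in $(-\infty,0)$ or in $(H,\infty)$. Both classes must be nonempty: if all $\lambda_j>H$ then $\sum_j\lambda_j>(n-1)H>H$, and if all $\lambda_j<0$ then $H<0$, each a contradiction. Since $H$ is continuous and nonvanishing, the sums of the eigenspaces of $h$ with eigenvalues $<0$ and with eigenvalues $>H$ form continuous subbundles of constant rank, because no eigenvalue ever crosses the gap $[0,H]$. This gives the splitting of $T(\partial M)$ into two nontrivial subbundles.
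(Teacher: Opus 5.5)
Your argument for (i) has a genuine gap, and it comes from reading the hypothesis existentially. In the paper, ``$\Ric_k^\gamma\le\sigma$ at $p$'' is the pointwise version of the condition in Definition~\ref{k-Ricci}: it means $\Ric_k^\gamma(v_0;v_1,\dots,v_k)\le\sigma$ for \emph{every} orthonormal set in $T_p\partial M$, not for one given frame $\{w_0;\dots,w_k\}$. Under your reading the statement is false, so no variational scheme in the spare directions can close it. For a counterexample, take the round $\mathbb S^n$, where $\Ric_k^g\equiv k$. Let $\partial M$ be an equator perturbed near $p$ so that its principal curvatures at $p$ are $a$ on $v_0$ and $-a$ on $v_1,\dots,v_k$, with $a>0$. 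By Gauss, $\Ric_k^\gamma(v_0;v_1,\dots,v_k)=k(1-a^2)=:\sigma<k$.

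With the correct reading, your opening idea is all that is needed. It suffices to find principal directions $v_0,\dots,v_k$ with principal curvatures $\mu_0,\dots,\mu_k$ satisfying $\mu_0(\mu_1+\dots+\mu_k)\ge0$. On that frame Gauss gives $\Ric_k^g=\Ric_k^\gamma-\mu_0(\mu_1+\dots+\mu_k)\le\sigma$, a contradiction. The paper orders $\lambda_1\ge\dots\ge\lambda_{n-1}$ and tests the two frames $(v_1;v_2,\dots,v_{k+1})$ and $(v_{n-1};v_2,\dots,v_{k+1})$. This is exactly where $k\le n-3$ enters: $k+1\le n-2$ keeps $v_{n-1}$ out of $\{v_2,\dots,v_{k+1}\}$. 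If the extension existed, then $\lambda_1S<0$ and $\lambda_{n-1}S<0$ with $S=\lambda_2+\dots+\lambda_{k+1}$. So $\lambda_1$ and $\lambda_{n-1}$, and hence all $\lambda_i$, would be nonzero of one sign, which forces $\lambda_1S>0$.

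For (ii), your Gauss computation and the inequality $\lambda_j(\lambda_j-H)>0$ are exactly the paper's key inequality $\lambda_j\sum_{i\ne j}\lambda_i<0$. However, the step ``in particular $H\neq0$'' is wrong: when $H=0$ the inequality only says $\lambda_j\neq0$. The Clifford torus bounding a solid torus in the round $\mathbb S^3$, with $\sigma=0$, satisfies all the hypotheses (principal curvatures $\pm1$, $\Ric^\gamma=0$, $\sec^g=1$) and has $H\equiv0$. Your orientation normalization and your nontriviality argument both rely on $H>0$, so they must be replaced.

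The repair is the paper's argument. All you need is that no principal curvature ever vanishes. Then the index of $h$ is locally constant, and the positive and negative eigenspaces of the shape operator give the splitting. Both summands are nontrivial: if all $\lambda_j$ had the same sign, then $\lambda_j(\lambda_j-H)=-\lambda_j\sum_{i\ne j}\lambda_i<0$.
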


Applying Theorem \ref{Thm: obstruction Ric_k} to suitable boundary metrics, we obtain the following concrete non-extension results:
\begin{corollary}\label{Cor: even sphere}
The following hold:
\begin{itemize}[left=0.5cm]
\item[(i)] For any compact $n$-manifold $M$ with boundary, $n\geq 4$, there exists a smooth metric $\gamma$ on $\partial M$ that cannot be extended to a smooth metric on $M$ with positive $(n-3)^{\rm th}$-intermediate Ricci curvature.
\item[(ii)] For every odd $n\geq 5$, there exists a smooth metric $\gamma$ on $\mathbb S^{n-1}$ that cannot be extended to a smooth metric on $\mathbb D^n$ with positive $(n-2)^{\rm th}$-intermediate Ricci curvature.
\end{itemize}
\end{corollary}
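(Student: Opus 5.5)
Both parts follow from Theorem \ref{Thm: obstruction Ric_k} once we exhibit suitable boundary metrics; nothing beyond the local obstructions is needed.

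\emph{Part (i).} Here $n\ge4$ and $k=n-3\ge1$, so $\partial M$ has dimension $n-1\ge3$. We take $\sigma=0$ and construct a metric $\gamma$ on $\partial M$ with $\Ric_{n-3}^\gamma\le 0$ at some point. The construction is local. Choose a point $p\in\partial M$ and a coordinate ball $B$ around it. Let $\gamma$ be any smooth metric on $\partial M$ that agrees with the flat Euclidean metric on a smaller ball inside $B$; a partition of unity gluing the flat metric on $B$ to an arbitrary metric elsewhere produces such a $\gamma$. At $p$ every sectional curvature vanishes, so $\Ric_{n-3}^\gamma=0\le0$ there. Note that $k=n-3\le(n-1)-1$, so an orthonormal set $\{v_0;v_1,\dots,v_k\}$ fits inside $T_p\partial M$ and the quantity is defined.

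Suppose $g$ extends $\gamma$ with $\Ric_{n-3}^g>0$. Then the restriction of $g$ to a collar neighborhood of $\partial M$ contradicts Theorem \ref{Thm: obstruction Ric_k}(i). Hence no such extension exists.

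\emph{Part (ii).} Here $n$ is odd and $n\ge5$, so $\partial\mathbb D^n=\mathbb S^{n-1}$ is an even-dimensional sphere of dimension at least $4$. We again take $\sigma=0$ and want a metric $\gamma$ on $\mathbb S^{n-1}$ with $\Ric^\gamma\le0$. The statement of Theorem \ref{Thm: obstruction Ric_k}(ii) is ambiguous on one point: it may require $\Ric^\gamma\le\sigma$ at a single point (or in a single direction), or it may require it everywhere. A closed sphere admits no metric with $\Ric\le0$ everywhere, so only the pointwise reading is usable, and I expect that is the intended one, as in part (i).

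Under the pointwise reading, we choose $\gamma$ to be flat near a point exactly as in part (i). If $\gamma$ extended to a metric on $\mathbb D^n$ with $\Ric_{n-2}>0$, then Theorem \ref{Thm: obstruction Ric_k}(ii) would force $T\mathbb S^{n-1}$ to split as a sum of two nontrivial subbundles. It remains to rule out such a splitting for even spheres, and this is the main topological step. Suppose $T\mathbb S^{2m}=E_1\oplus E_2$ with both summands nontrivial. Then the Euler class satisfies
\[
e(T\mathbb S^{2m})=e(E_1)\smile e(E_2).
\]
If either summand has odd rank, its rational Euler class vanishes, and the Euler class is defined whenever the bundle is orientable. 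The spheres $\mathbb S^{2m}$ with $m\ge2$ are simply connected, so every real vector bundle over them is orientable. If both summands have even positive rank, then $e(E_i)$ lies in $H^{r_i}$ with $0<r_i<2m$, and this group vanishes. In either case
\[
e(T\mathbb S^{2m})=0 ,
\]
which contradicts $\chi(\mathbb S^{2m})=2\ne0$.

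The hard step is therefore this Euler class argument, together with confirming that the pointwise hypothesis in Theorem \ref{Thm: obstruction Ric_k}(ii) is indeed what the theorem allows. If the theorem in fact requires $\Ric^\gamma\le0$ everywhere, the construction above breaks down, since no metric on $\mathbb S^{n-1}$ satisfies that. In that case I would still take $\sigma=0$ and look for a metric with $\Ric^\gamma\le0$ in a suitable weaker sense, though I do not see how to do this.
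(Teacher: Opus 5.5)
Part (i) is correct and is essentially the paper's argument. The paper makes $\gamma$ hyperbolic near a point, so $\Ric_{n-3}^\gamma<0$ there. Your locally flat choice gives $\Ric_{n-3}^\gamma=0\le 0$, which the non-strict hypothesis of Theorem \ref{Thm: obstruction Ric_k}(i) allows just as well. Your Euler class argument in part (ii) is also correct. The vanishing of $H^{r_i}(\mathbb S^{2m};\mathbb Z)$ for $0<r_i<2m$ already handles every rank, so the separate odd-rank discussion is unnecessary.

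Part (ii) has a genuine gap, and it comes from a false claim. It is not true that a sphere admits no metric with $\Ric\le 0$ everywhere. That statement holds for sectional curvature (Cartan--Hadamard), but not for Ricci curvature. Lohkamp's theorem \cite{Loh1994}, which the paper uses here, gives every closed manifold of dimension at least $3$ a metric with $\Ric<0$. In particular $\mathbb S^{n-1}$ carries a metric $\gamma$ with $\Ric^\gamma<0=\sigma$ everywhere.

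The hypothesis of Theorem \ref{Thm: obstruction Ric_k}(ii) is meant globally; compare the explicit phrase ``at some point'' in (i). The pointwise reading you adopt cannot work. The proof derives $\lambda_i\sum_{j\neq i}\lambda_j<0$, and hence $\lambda_i\neq 0$, only at points where $\Ric^\gamma\le\sigma$. It needs this at every point of $\partial M$, so that the shape operator is invertible everywhere. Only then are the numbers of positive and negative principal curvatures constant on the connected boundary, and only then do the corresponding eigenspaces form genuine subbundles $\xi_\pm$ of $T(\partial M)$. At a single point you get only a splitting of one tangent space, which carries no topological information, so your locally flat $\gamma$ gives no contradiction.

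The repair is to take $\gamma$ to be a negative-Ricci metric from Lohkamp's theorem. Then Theorem \ref{Thm: obstruction Ric_k}(ii) with $\sigma=0$, together with your Euler class argument, completes the proof exactly as in the paper.
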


\subsection{Metric extension problem for negative curvature conditions}
By a result of Gromov \cite{Gro1969}, every compact $n$-manifold with boundary, $n\geq 2$, admits a metric with negative sectional curvature. For Ricci curvature, Gao and Yau \cite{GY} proved that every $3$-manifold of finite topological type admits a complete metric with negative Ricci curvature. Lohkamp \cite{Loh1994} extended this result to all manifolds without boundary of dimension \(n\geq 3\). He also proved that any metric on the boundary of a compact \(n\)-manifold, \(n\geq 3\), extends to a metric with negative Ricci curvature for which the boundary is totally geodesic. This naturally raises the question of whether prescribing the boundary metric introduces any obstruction to an extension with negative sectional curvature.

Beyond negative sectional curvature, we also consider negative curvature operator, which may impose stronger topological constraints. For instance, Aravinda and Farrell \cite{AF05} showed that there exist closed manifolds that admit metrics with negative sectional curvature but no metric with nonpositive curvature operator.  

As for positive sectional curvature, the metric extension problem for negative sectional curvature is not always solvable.

\begin{proposition}\label{Prop: global obstruction}
Let \(M=(\mathbb S^1\times\mathbb D^2)\#(\mathbb S^2\times\mathbb S^1)\),
where the connected sum is taken in the interior. Then no flat metric on
\(\partial M\cong\mathbb T^2\) can be extended to a smooth metric on \(M\)
with nonpositive sectional curvature.
\end{proposition}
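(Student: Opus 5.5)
Suppose, for contradiction, that $g$ is a smooth metric on $M$ with $\sec^g\le 0$ whose restriction to $\partial M\cong\mathbb T^2$ is flat. The plan is to glue a flat piece onto the boundary so that we obtain a complete nonpositively curved manifold, and then apply Cartan--Hadamard. The boundary, however, is not assumed convex, so the gluing must be done with care.

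\textbf{Step 1: attach a collar.} Glue to $\partial M$ the half-infinite collar $[0,\infty)\times\mathbb T^2$ carrying a warped product metric
$$dt^2+\phi(t)^2\,\gamma,$$
in which $\gamma$ is the given flat metric. Such a metric has nonpositive sectional curvature whenever $\phi''\ge 0$, since the horizontal curvature is $-(\phi')^2/\phi^2\le0$ because $\gamma$ is flat. The result is complete with $\sec\le0$, but at the seam the second fundamental forms need not match. We therefore invoke the corner-smoothing results for upper curvature bounds developed later in the paper. These require that the glued piece be at least as convex as the outside, i.e. $\secf_{\text{collar}}\ge -\secf_{M}$ in the appropriate sign convention.

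\textbf{Step 2: choose the collar and smooth.} A second fundamental form $\secf_M$ of arbitrary shape cannot be matched by a warped product, which is umbilic. So first deform $g$ near $\partial M$ inside a thin collar, keeping $\sec\le0$ and keeping the boundary metric fixed, so that $\partial M$ becomes strictly convex and umbilic. One way to do this is to write $g=dr^2+g_r$ near the boundary and interpolate to $dr^2+e^{2\lambda r}\gamma$ with $\lambda$ large. Taking the warped product on the other side with $\phi(0)=1$ and $\phi'(0)=\lambda$, one can then glue smoothly, and $\phi(t)=e^{\lambda t}$ gives a complete end.

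This deformation is the main obstacle: one must keep $\sec\le 0$ during the interpolation. I would first try to make the mean convexity large, which dominates the curvature terms in the Gauss and Riccati equations, by the standard trick $h=dr^2+\psi(r)^2 g_r$ with $\psi$ rapidly convex. Failing that, I would rely on the paper's corner-smoothing result for upper curvature bounds.

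\textbf{Step 3: topological contradiction.} The resulting complete manifold $N$ has $\sec\le0$ and is diffeomorphic to the interior of $M$. By Cartan--Hadamard, $N$ is aspherical, so $\pi_2(N)=0$. On the other hand,
$$\pi_1(N)=\mathbb Z*\mathbb Z,$$
and by the sphere theorem the essential $\{pt\}\times\mathbb S^2\subset \mathbb S^2\times\mathbb S^1$ gives $\pi_2(N)\neq0$; equivalently, the universal cover contains infinitely many lifts of nonseparating spheres. This contradicts asphericity.

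As a second route, one can use Preissman's theorem. The boundary torus is incompressible in the $\mathbb S^1\times\mathbb D^2$ summand only partially, since the meridian dies. Hence a $\mathbb Z^2$ subgroup does not exist, which also rules out the flat cusp picture. The asphericity argument, however, suffices.
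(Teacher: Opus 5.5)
Your Step 2 has a genuine gap, and it carries the whole difficulty. As you set it up, the deformation never uses that $\gamma$ is flat: you arrange $g=dr^2+e^{2\lambda r}\gamma$ near $\partial M$, with $\lambda$ large, while keeping $\sec\le0$. In that generality the claim is false. By Gromov's $h$-principle, $M$ carries metrics with $\sec<0$ and some boundary metric $\gamma'$ (see the remark after Theorem~\ref{Thm:main 2}). Apply your Step 2 to such a metric and continue with the end $dt^2+e^{2\lambda t}\gamma'$, which has $\sec<0$ once $\lambda^2>\max\sec^{\gamma'}$. This gives a complete metric with $\sec\le0$ on $\mathring M$, and your own Step 3 then yields a contradiction.

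The concrete failure is at concave points of $\partial M$. For $dr^2+\psi(r)^2g_r$, the shape operator of each slice is shifted by $(\psi'/\psi)\,\mathrm{Id}$. Suppose $\partial M$ has principal curvatures $\kappa_1<\kappa_2<0$ (inward normal) at some point, and set $x=\psi'/\psi$. Since the slices are nearly flat, the Gauss equation makes the tangential curvature approximately $-(\kappa_1+x)(\kappa_2+x)$. This is positive for $|\kappa_2|<x<|\kappa_1|$, and $x$ must pass through that range on its way from $0$ to a large value. The same happens for any cutoff interpolation from $g_r$ to $e^{2\lambda r}\gamma$, which is a corner smoothing in disguise.

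Your fallback misreads Theorem~\ref{thm: smoothing sec}. Besides $\secf^{+}+\secf^{-}<0$, it requires the $2$-convexity condition (iii); Kosovski\u{\i}'s Theorem~\ref{Kos2} shows the sign condition alone does not suffice. With $\secf^{+}=-\lambda\gamma$ and $\lambda$ large, condition (iii) on the surface $\partial M$ fails wherever $\partial M$ has negative mean curvature. Moreover, that theorem only yields $\sec\le K+\varepsilon=\varepsilon$, which is useless for the classical Cartan--Hadamard theorem.

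The missing idea is to stay synthetic; this is where flatness actually enters, and it removes the need for both the collar and the smoothing. This is the paper's proof. By the Alexander--Berg--Bishop characterization (Theorem~\ref{ACBoundary}), $(M,d_g)$ is locally $\mathrm{CAT}(0)$. The interior has $\sec\le0$. The outward sectional curvatures of $\partial M$ matter only on the concave part, exactly where your smoothing breaks down, and there they are intrinsic curvatures of $\gamma$, hence $0$. Since $M$ is compact, the generalized Cartan--Hadamard theorem (Theorem~\ref{gcartan-hadamard}) makes the universal cover contractible, so $\pi_2(M)=0$.

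You could instead keep your collar, glued with $\phi'(0)$ large, and apply Kosovski\u{\i}'s criterion. Its condition (iii) is automatic, because $T_p\partial M$ is the only tangent $2$-plane and $\gamma$ is flat. But you would still need the generalized Cartan--Hadamard theorem, so the collar gains nothing.

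Two smaller points about Step 3. Justify $\pi_2(M)\neq0$ by homology rather than the sphere theorem, which runs in the opposite direction. The sphere $\mathbb S^2\times\{pt\}$ meets a loop $\{q\}\times\mathbb S^1$ transversally once, so it is nonzero in $H_2(M;\mathbb Z)$ and hence essential by Hurewicz. Also drop the Preissman route: the absence of a $\mathbb Z^2$ subgroup produces no contradiction.
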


In contrast to the positive sectional curvature case, the obstruction here is \emph{global}. Indeed, for any smooth metric $\gamma$ on $\partial M$, one can always extend $\gamma$ to a smooth metric with negative curvature operator on a small collar neighborhood of $\partial M$ (see Lemma \ref{Lem: NSecCcobordismlm}); however, the topology of $M$ may obstruct a global continuation of the local extension. Nevertheless, this obstruction vanishes for certain manifolds.

We use the following convention and terminology. For a hypersurface with a chosen unit normal $\nu$, its second fundamental form is defined by
\[
\mathrm{II}(X,Y)
=
\langle \nabla_X Y,\nu\rangle
\]
The index of the boundary at a point is defined as the negative index of inertia of the second fundamental form with respect to the inward unit normal. In particular, a convex boundary has index \(0\) at every point. Let $\mathcal C_n$ denote the class of compact $n$-manifolds with boundary that admit a smooth metric with negative sectional curvature such that the boundary has index at most $1$ everywhere. Define \(\mathcal C_n'\) analogously, with negative curvature operator in place of negative sectional curvature. Clearly, \(\mathcal C_n'\subset \mathcal C_n\).

\begin{theorem}\label{Thm:main 2}
Let $M\in \mathcal C_n$ (respectively, $\mathcal C'_n$). Then any smooth metric $\gamma$ on $\partial M$ can be extended to a smooth metric on $M$ with negative sectional curvature (respectively, negative curvature operator) and strictly convex umbilical boundary.
\end{theorem}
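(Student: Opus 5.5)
We start from a metric $g_0$ on $M$ with negative sectional curvature (respectively, negative curvature operator) whose boundary has index at most $1$ everywhere. The goal is to glue a collar onto $\partial M$ that interpolates between the induced metric $g_0|_{\partial M}$ and the prescribed $\gamma$. The plan has three steps.

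\textbf{Step 1: make the boundary strictly convex.} We first push the boundary outward along a negatively curved end. Concretely, near $\partial M$ we write $g_0=dt^2+h_t$ and attach an exterior piece of the form $dt^2+e^{2\lambda t}h_0$ type warping, or, alternatively, a conformal deformation $e^{2f}g_0$ with $f$ depending on the distance to the boundary. The index-one condition is where this choice matters: sectional curvature of a warped product $dt^2+\phi(t)^2h$ involves $-\phi''/\phi$ and $(K_h-\phi'^2)/\phi^2$. When at most one principal direction is concave, a Gauss-equation argument should let a single convexifying deformation dominate that one bad direction while keeping all mixed curvatures negative.

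What I would try first is the following. Modify $g_0$ in a thin collar so that the second fundamental form becomes $\secf+\Lambda\, g$ for large $\Lambda$. This changes the curvature by terms of order $-\Lambda^2$ in tangential planes and $-\Lambda'$ in normal planes. The cross terms between the one concave direction and the convex ones are controlled precisely because only one eigenvalue is negative: a product of two negative eigenvalues, which would create positive curvature via the Gauss equation, never appears. I expect this to be the main obstacle. It amounts to a corner-smoothing or upper-curvature-bound argument showing that a boundary of index $\le 1$ can be deformed to a strictly convex umbilical one while preserving $\sec<0$ (respectively, negativity of the curvature operator). After this step, $\partial M$ is strictly convex, and after a further collar modification it is umbilical with induced metric $\gamma_0$.

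\textbf{Step 2: interpolate the metric.} Next we invoke Lemma~\ref{Lem: NSecCcobordismlm} in its cobordism form. On $\partial M\times[0,1]$ we take the metric
\[
dt^2+\phi(t)^2\,\gamma_t,
\]
where $\gamma_t$ is a path from $\gamma_0$ to a homothetic copy of $\gamma$. Choosing $\phi$ to grow rapidly, with $\phi''/\phi$ large and $\phi'^2/\phi^2$ dominating the intrinsic curvature of $\gamma_t$ and the $t$-derivatives of $\gamma_t$, yields a metric with negative sectional curvature (respectively, negative curvature operator). In this metric both ends are strictly convex umbilical (inward normal at the outer end) and the outer end has induced metric $c\gamma$ for some $c>0$. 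The gluing at the inner end matches Step 1, where the boundary is umbilical and the normal derivatives agree after smoothing the corner in $t$. Because the curvature inequalities are open conditions, a standard smoothing of the $C^1$ gluing with curvature bounded above (the "corner smoothing for upper curvature bounds") is available.

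\textbf{Step 3: fix the constant $c$.} To obtain exactly $\gamma$, we run Step 2 with $\gamma_t$ ending at $c^{-1}$ times the target and then rescale. A more direct option is to choose the path so that $\phi(1)^2\gamma_1=\gamma$, which is possible because $\phi(1)$ is at our disposal after fixing its growth rate. Scaling preserves the sign of the curvature, convexity, and umbilicity. The result is a metric on $M\cup(\text{collar})\cong M$ with the required properties.
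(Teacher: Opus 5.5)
The gap is in Step~1, and it is exactly where the index hypothesis has to be used. Steps 2 and 3 are workable. The warped neck $dt^2+\phi(t)^2\gamma_t$ with $\phi=e^{\Lambda t}$ and $\Lambda$ large has negative curvature operator. Once $\partial M$ is strictly convex, the corner is Lipschitz rather than $C^1$, but it is handled by Theorem~\ref{thm: smoothing sec}, since condition (iii) is then automatic by Proposition~\ref{practical_use}. For umbilicity at the outer end you should take $\gamma_t$ homothetic near $t=1$.

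You leave Step~1 as ``the main obstacle'', and the mechanism you sketch there fails. In a thin collar, the slice second fundamental forms must pass through $\secf+\lambda g$ for $0\le\lambda\le\Lambda$. Take a $2$-plane $P$ spanned by the concave principal direction and a convex one, with principal curvatures $\mu_1<0<\mu_2$ and $\mu_1+\mu_2<0$; index $\le1$ allows this. Planes tangent to the slices receive no contribution from the second normal derivative (your $-\Lambda'$ terms). So by the Gauss equation, up to errors vanishing with the collar width,
\[
\sec(P)\approx \sec^{g_0}(P)+\mu_1\mu_2-(\mu_1+\lambda)(\mu_2+\lambda).
\]
At $\lambda=-\frac12(\mu_1+\mu_2)$ this equals $\sec^{g_0}(P)+\frac14(\mu_1+\mu_2)^2$. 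That can be positive: $\mu_1=-5$, $\mu_2=1$ in a region of curvature $-1$ gives $+3$. The defect is scale invariant, so rescaling does not help. A conformal factor $e^{2f(r)}$ also shifts $\secf$ by a multiple of the metric, and it produces the same term where $|f'|$ passes through $\frac12|\mu_1+\mu_2|$.

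So the danger is not a product of two negative eigenvalues. It is a mixed-sign pair whose product dips during the interpolation. Isotropic convexification would need $\secf$ to be $2$-convex with respect to $g$, and index $\le1$ does not give that.

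The missing idea is an \emph{anisotropic} convexifying direction, together with a smoothing theorem controlled by it. The paper proceeds as follows.

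First it pushes $\partial M$ slightly outward in an extension of $g_0$. By the Riccati equation and $\sec<0$, $\secf_r>\secf_0$, so at most one eigenvalue is nonpositive. A zero eigenvalue alongside the negative one would defeat the next step.

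Next, Lemma~\ref{lem:two-convex-metric} gives a metric $B$ that is locally $\diag(L,1,\dots,1)$ in a principal frame where $\secf^-=\diag(a_1,\dots,a_m)$ and $0<a_2\le\dots\le a_m$. This stretches the concave direction, so the relative eigenvalues become $a_1/L,a_2,\dots,a_m$, and $\secf^-$ is strictly $2$-convex with respect to $B$.

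Then a neck from Lemma~\ref{Lem: NSecCcobordismlm} along $\gamma_0+tB+t^2D$ has inner second fundamental form $-\frac{1}{2u_1(0)}B$. The corner is smoothed by Theorem~\ref{thm: smoothing sec}. Its hypothesis (iii), supplied by Proposition~\ref{practical_use}, is precisely what makes the quantity $(A_t\wedge A_t)(\varphi,\varphi)$ in its proof nondecreasing along the interpolation, which rules out the dip above.

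Only after this neck is the boundary strictly convex. A second, homothetic neck (or your Steps 2--3) then finishes the argument.
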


\begin{remark}
The index bound in Theorem \ref{Thm:main 2} is sharp: it cannot in general be relaxed to $2$. Indeed, let
\(
M=(\mathbb S^1\times\mathbb D^2)\#(\mathbb S^2\times\mathbb S^1).
\)
By Gromov's \(h\)-principle, \(M\) admits a smooth metric with negative sectional curvature. Since \(\partial M\) is two-dimensional, its index is at most \(2\). However, Proposition \ref{Prop: global obstruction} shows that a flat metric on \(\partial M\) cannot be extended to a metric on \(M\) with even nonpositive sectional curvature. 
\end{remark}

A basic class of examples in $\mathcal C'_n$ is given by compact cores of convex cocompact hyperbolic $n$-manifolds. Further examples are provided by the following structural properties of $\mathcal C_n$ and $\mathcal C'_n$.

\begin{theorem}\label{Thm: structure}
The following statements hold:
\begin{itemize}[left=0.5cm]
\item[(i)] The classes $\mathcal C_n$ and $\mathcal C'_n$ are closed under boundary connected sums and attachments of boundary $1$-handles. In particular, $\mathcal C'_n$ contains all $n$-dimensional $1$-handlebodies. 
\item[(ii)] Let $N^k$ be a closed manifold that admits a metric with negative sectional curvature (respectively, negative curvature operator), and let $E$ be a rank $m$ vector bundle over $N$. Then the disk bundle \(D(E)\) belongs to \(\mathcal C_{k+m}\) (respectively, \(\mathcal C'_{k+m}\)).
\end{itemize}
\end{theorem}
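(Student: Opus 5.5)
We treat the two parts of Theorem \ref{Thm: structure} separately. For (ii) we can aim for boundary index \(0\), i.e. a metric on the disk bundle with convex boundary. For (i) we glue two pieces along a small half-ball in their boundaries and smooth the result.

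\textbf{Part (ii).} Fix a metric \(g_N\) on \(N\) with \(\sec<0\) (respectively \(\mathrm{Rm}<0\)). Choose a metric connection on \(E\) and a bundle metric, and form the connection (Sasaki-type) metric on the total space,
\[
g_\varepsilon=\pi^*g_N+\varepsilon^2 h^{\mathrm{fib}},
\]
where \(h^{\mathrm{fib}}\) is the flat Euclidean fiber metric coupled through the horizontal distribution. Restrict it to the disk bundle \(D(E)\) of radius \(1\). By O'Neill's formulas, the curvature of \(g_\varepsilon\) is controlled by three ingredients: the curvature of \(g_N\) on horizontal planes, the vanishing fiber curvature on vertical planes, and mixed terms involving the \(A\)-tensor. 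The \(A\)-tensor is built from the connection curvature \(F\) evaluated at the fiber vector \(v\), so its size is \(O(\varepsilon|v|)\).

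Vertical and mixed planes are only nonpositively curved at this stage. To make them strictly negative, warp the fiber: replace the flat fiber by a hyperbolic-type radial profile \(dr^2+f(r)^2 g_{S^{m-1}}\) with \(f''>0\). Alternatively, use the warped product form \(g_N+dr^2+\ldots\) with a fiber factor \(\cosh\)-type function. In the trivial-bundle case this reduces to a doubly warped product, and the standard warped-product formulas then give \(\sec<0\) provided \(f''/f\) is large compared with the curvature of \(g_N\).

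The boundary \(\{r=1\}\) is then an \(S^{m-1}\)-bundle. Its second fundamental form has the fiber directions convex (\(f'>0\)) and the horizontal directions tangent to level sets of \(r\), so these are either flat or made convex by additionally multiplying \(g_N\) by a factor \(\cosh^2(r)\). This gives boundary index \(0\).

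The main work in (ii) is controlling the connection curvature terms, which is done by taking \(\varepsilon\) small. The scaling is compatible with strict negativity: \(\sec<0\) is an open condition on a compact set, and the warping can be tuned to dominate the \(O(\varepsilon)\) errors. For negative curvature operator we also check the mixed terms of the full operator, not just sectional curvatures. This goes through because a metric of the form \(\cosh^2(r)g_N+dr^2+\sinh^2(r)g_{S}\) is a perturbation of hyperbolic-like model behavior.

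\textbf{Part (i).} Attaching a boundary \(1\)-handle to \(M\) amounts to taking the boundary connected sum of \(M\) with \(\mathbb D^n\), or of \(M\) with itself, along two small half-balls on \(\partial M\). It therefore suffices to build a neck \(\mathbb D^{n-1}\times[0,1]\), joined to \(\partial M\) near two boundary points, that has \(\sec<0\) (respectively \(\mathrm{Rm}<0\)) and boundary index \(\le1\).

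\begin{itemize}[left=0.5cm]
\item First deform the metric near a boundary point \(p\) so that it is nearly hyperbolic there with totally geodesic or convex boundary. This is possible by a local conformal or cutoff modification, which preserves strict curvature inequalities in a small ball after rescaling.
\item Model the neck on a tubular neighborhood of a geodesic segment in hyperbolic space: \(dt^2\cosh^2 s+ds^2+\sinh^2 s\,g_{S^{n-2}}\), cut at \(s\le s_0\). On its side boundary \(\{s=s_0\}\) the second fundamental form is positive in the \(S^{n-2}\) directions and positive in the \(t\) direction as well. So the tube is strictly convex, which is even better than required.
\item Glue the tube ends onto small half-balls around \(p\) and \(q\) on \(\partial M\). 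The gluing creates corners where the tube boundary meets \(\partial M\). These corners are concave in exactly one direction, namely the direction normal to the corner inside the boundary, and this is where the index-\(1\) allowance is used.
\item Smooth the corners with a corner-smoothing lemma for upper curvature bounds, of the kind alluded to in the abstract. A concave corner with a single bad direction smooths to a hypersurface with one negative principal curvature, and the ambient metric can be taken unchanged since we are smoothing the domain.
\end{itemize}

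The key obstacle in (i) is ensuring that the smoothed boundary near the junction has index at most \(1\) rather than \(2\). At points where the old boundary of \(M\) already has index \(1\), we must first use a preliminary local modification to make \(\partial M\) convex near \(p\) and \(q\). Only after that do we attach the neck, so that concavity along the corner is the only negative direction.

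The claim that \(\mathcal C'_n\) contains all \(1\)-handlebodies then follows from two facts. First, \(\mathbb D^n\), a geodesic ball in \(\mathbb H^n\), lies in \(\mathcal C'_n\). Second, the class is closed under attaching \(1\)-handles.
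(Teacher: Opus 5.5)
\textbf{Part (i) has a genuine gap at its first step.} The tube construction only works if the metric is \emph{exactly} hyperbolic on an open set around $p$, including a region just outside $M$. Only then can a hyperbolic tube be attached isometrically, leaving domain corners but no metric corners. For $M_1\#_\partial M_2$ you would also need such exactly hyperbolic pieces to identify isometrically.

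Your justification is a local conformal or cutoff modification that ``preserves strict curvature inequalities in a small ball after rescaling''. This is false. In normal coordinates at $p$ one has $g-g_{\mathbb H}=O(|x|^2)$, with quadratic coefficient determined by $R^g(p)-R^{\mathbb H}$. A cutoff $\chi$ at scale $\rho$ has $|\partial^2\chi|\sim\rho^{-2}$, so the interpolated metric picks up curvature errors comparable to $|R^g(p)-R^{\mathbb H}|$. These errors are independent of $\rho$, and rescaling multiplies curvatures and errors by the same factor. Nothing keeps the result negatively curved, and ``nearly hyperbolic'' would not suffice anyway.

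Without exact hyperbolicity, attaching the tube creates a metric corner along the attaching disk. That disk is a hypersurface with boundary meeting $\partial M$, whereas Theorem~\ref{thm: smoothing sec} treats only closed interior hypersurfaces. It also requires $\secf^++\secf^-<0$ and $2$-convexity, which fail, e.g., for a totally geodesic tube end against a convex $\partial M$. Your preliminary modification making $\partial M$ convex near $p,q$ is likewise unjustified, though you could instead start from Theorem~\ref{Thm:main 2}.

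The paper's missing idea is to create the hyperbolic region at infinity rather than by local surgery. Prescribe a conformal infinity that is flat near $p_i$ and apply Lemma~\ref{Lem: improve conformal}. There the AH end is $dt^2+f_R(t)^2\gamma$ with $f_R=(a+\lambda)e^t$ for $t\ge R$, so it is exactly $\rho^{-2}(d\rho^2+dx^2)$ near $p_i$. One then excises half-balls and identifies the hemispheres by the inversion $I_r$, which is a hyperbolic isometry. The glued metric is smooth with no corners of any kind, and a sublevel set $\{\rho_\#\ge\delta\}$ of the interpolated defining function has strictly convex boundary. The index-one hypothesis enters only through the AH extension theorem.

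\textbf{Part (ii) is close in spirit to the paper, but your error analysis is wrong.} The shared outline is: connection metric, extra negativity in the vertical and mixed directions near the zero section, then a small disk bundle with convex boundary. On the zero section the $A$-tensor vanishes, but its vertical derivative does not. Components such as $\Rm(X\wedge Y,U\wedge V)$ and $\Rm(X\wedge U,Y\wedge V)$, with $X,Y$ horizontal and $U,V$ vertical, are of the size of the connection's curvature. They do not shrink when you shrink the disk bundle or rescale the fibers.

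With the fixed profile $\cosh^2 r\,g_N+dr^2+\sinh^2 r\,g_{S^{m-1}}$, the vertical and mixed curvatures are only about $-1$, which need not dominate these cross terms. You need a large parameter: e.g.\ a profile of curvature $-\kappa^2$ with $\kappa\gg1$, or, as in the paper, the conformal factor $e^{2\lambda r^2}$. Along the zero section, writing $\varphi=\alpha+\beta+\gamma\in\wedge^2TN\oplus(TN\wedge E)\oplus\wedge^2E$, this factor adds $-2\lambda|\beta|^2-4\lambda|\gamma|^2$. Young's inequality then absorbs all bounded cross terms against the fixed negative $\wedge^2TN$ block. Continuity gives the curvature condition on $\{r\le\varepsilon\}$, and the conformal formula for $\secf$ gives strict convexity.
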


In dimension $3$, the situation is much clearer. Since every bivector in dimension $3$ is decomposable, $\mathcal C_3=\mathcal C'_3$. Combining Theorem \ref{Thm:main 2} with Hass's characterization of compact orientable
$3$-manifolds admitting a metric of negative sectional curvature with convex boundary \cite[Section~3.4]{Hass}, we obtain that, for a compact orientable $3$-manifold $M$ with nonempty boundary, $M\in\mathcal C_3$ if and only if
$M$ is irreducible and contains no embedded $\pi_1$-injective torus. 

For such $M$, every smooth metric $\gamma$ on $\partial M$ can in fact be extended to a metric of constant negative sectional curvature with strictly convex boundary. After a suitable rescaling, this follows from the classical theorem of Aleksandrov and Pogorelov \cite{Alex,Pog} when $M$ is a $3$-ball, from Labourie's theorem \cite{Labourie} when $M$ is neither a $3$-ball nor a solid torus, and from Schlenker's treatment \cite{Schlenker} of the remaining solid-torus case.

For a compact Riemannian manifold \((M,g)\) with boundary and negative sectional curvature, it is also natural to ask whether \((M,g)\) admits an isometric extension to a complete open Riemannian manifold of negative sectional curvature that is diffeomorphic to its interior \(\mathring M\). Pigola and Veronelli proved that such an extension exists when $\partial M$ is strictly convex; see \cite[Theorem M]{PV}. Our result relaxes the strict convexity assumption to the requirement that the boundary have index at most one everywhere. In addition, the extension can be chosen to be asymptotically hyperbolic with any prescribed conformal infinity.

\begin{theorem}\label{Thm:conformally compact}
Let \(M\) be a compact manifold with boundary, and let \(g\) be a smooth metric on \(M\) with
negative sectional curvature such that the boundary has index at most \(1\)
everywhere. Then, for every smooth Riemannian metric \(\gamma\) on
\(\partial M\), there exist a complete asymptotically hyperbolic metric
\(\tilde g\) on \(\mathring M\) with negative sectional curvature and
conformal infinity \((\partial M,[\gamma])\), and an isometric embedding
\[
X\colon (M,g)\hookrightarrow (\mathring M,\tilde g).
\]
The same conclusion holds with negative sectional curvature replaced by negative curvature operator throughout.
\end{theorem}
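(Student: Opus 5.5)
We plan to attach to $(M,g)$ a collar $\partial M\times[0,\infty)$ along $\partial M$, carrying a metric that matches $g$ to all orders at $\partial M\times\{0\}$. Since $M\cup_{\partial M}(\partial M\times[0,\infty))$ is diffeomorphic to $\mathring M$, with $M$ sitting inside as a compact subset, this produces both $\tilde g$ and the embedding $X$. The work splits into two steps: a neck step that makes the boundary strictly convex and umbilical, and a warped-product end that supplies the hyperbolic asymptotics.

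\emph{Step 1 (neck and convexification).} We first enlarge $(M,g)$ slightly. Use a collar $\partial M\times[0,\epsilon]$, glued along $\partial M\times\{0\}$, with metric $dt^2+h_t$, where $h_0=g|_{\partial M}$ and the first $t$-derivative of $h_t$ matches the second fundamental form of $\partial M$. The family $h_t$ should be chosen so that three things hold:
\begin{itemize}
\item sectional curvature stays negative (respectively, the curvature operator stays negative);
\item the outer boundary $\partial M\times\{\epsilon\}$ becomes strictly convex, and, after a further stage, umbilical;
\item the outer boundary metric is a large multiple $\lambda^2\gamma'$ of a metric conformal to $\gamma$.
\end{itemize}
The index $\le1$ hypothesis is what allows this. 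At each point only one principal direction has the wrong sign of curvature. In the Gauss-type formula for the collar, the mixed curvatures $\sec(\partial_t\wedge e_i)=-h^{-1}\partial_t^2h$ in the diagonal case can be kept negative while $\partial_t h_t$ is driven positive. Tangential curvatures are given by the Gauss equation: the intrinsic curvature of $h_t$ minus products of principal curvatures. For a $2$-plane containing the single concave direction, that product has the wrong sign, but it is compensated by taking $h_t$ to grow fast, for instance $h_t=e^{2At}(\cdots)$ with $A$ large. This is essentially the mechanism that should underlie Lemma \ref{Lem: NSecCcobordismlm} and Theorem \ref{Thm:main 2}. Indeed, Theorem \ref{Thm:main 2} applied to $M$ with the collar treated as a cobordism already gives an extension with strictly convex umbilical boundary. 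Here, however, we need the extension to contain the given $(M,g)$ isometrically, so we use the collar version rather than the statement as a black box.

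\emph{Step 2 (asymptotically hyperbolic end).} Once the boundary is strictly convex and umbilical, with second fundamental form $\mu\,h$ and induced metric $h=\lambda^2\gamma$ (where $\lambda$ is large and $\gamma$ is the prescribed conformal representative), we attach the end $\partial M\times[\epsilon,\infty)$ with metric
\[
dt^2+f(t)^2\gamma .
\]
Here $f$ is convex, $f$ matches $\lambda$ and $f'/f=\mu$ at $t=\epsilon$, and $f(t)=c\sinh(t)$ (or $ce^{t}$) for $t$ large. For this warped product,
\[
\sec(\partial_t\wedge X)=-\frac{f''}{f},\qquad \sec(X\wedge Y)=\frac{\sec_\gamma-(f')^2}{f^2}.
\]
Both are negative once $f''>0$ and $(f')^2>\max\sec_\gamma$, and the second condition holds if $\lambda$, hence $f'$, is large. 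For the curvature operator version, the warped product with $(f')^2$ dominating $\|\Rm_\gamma\|$ has negative definite curvature operator. The metric $\rho^2\tilde g$, with $\rho=e^{-t}$, extends to $t=\infty$ with boundary metric a multiple of $\gamma$. This gives conformal infinity $[\gamma]$ and sectional curvatures tending to $-1$, so $\tilde g$ is asymptotically hyperbolic; completeness is clear.

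\emph{Main obstacle.} The hard part is Step 1: matching the arbitrary given data of $(M,g)$, namely an index-$\le1$ second fundamental form and an arbitrary boundary metric, to umbilical convex data whose induced metric is conformal to the prescribed $\gamma$, while keeping curvature negative through the transition. Two features make this delicate:
\begin{itemize}
\item Changing the conformal class of the boundary metric from $g|_{\partial M}$ to $[\gamma]$ requires $t$-dependent non-conformal interpolation $h_t=e^{2At}\big((1-\chi(t))h_0+\chi(t)\lambda_0^2\gamma\big)$. Its error terms, including $\chi'$ and $\chi''$ contributions and intrinsic curvature, must all be absorbed by the dominant factor $-A^2$.
\item The gluings produce corners that need smoothing compatible with an upper curvature bound.
\end{itemize}
I would first try to choose $\chi$ varying on a long interval of length $L$, with $A\gg L^{-1}$, so that $\chi'$ and $\chi''$ are small relative to $A$. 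I would then invoke the paper's corner-smoothing for upper curvature bounds at the junctions $t=0$ and $t=\epsilon$.
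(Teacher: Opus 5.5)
\textbf{The gap is in Step 1.} Your Step 2 is fine and is essentially the paper's end construction. The paper uses $f(t)=(a+\lambda)e^t-ae^{-t}$ and needs neither umbilicity nor $C^1$ matching. Once $\partial M_-$ is strictly convex, it chooses $f'(0)$ large so that $\secf^-+\secf^+<0$ and smooths the corner. Hypothesis (iii) of Theorem \ref{thm: smoothing sec} is then automatic because $\secf^->0$.

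Step 1 is the real content of the theorem, and the mechanism you propose there does not work. There are three problems.
\begin{enumerate}
\item If the collar matches $g$ to first order, then $h_0'=2\secf_0$ is pinned. Your ansatz $h_t=e^{2At}(\cdots)$ has $h_0'=2Ah_0+\cdots$, which is incompatible with this. So the growth must be ramped up while the slice second fundamental form travels from $\secf_0$ to a positive definite form. Take a quick isotropic ramp $\secf_t\approx\secf_0+s\,h_0$ with $h_t\approx h_0$. On the plane of principal directions with $\kappa_1<0<\kappa_2$ and $\kappa_1+\kappa_2<0$, the Gauss equation raises the sectional curvature by $\kappa_1\kappa_2-(\kappa_1+s)(\kappa_2+s)$, which reaches $(\kappa_1+\kappa_2)^2/4$. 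This amount is scale invariant and need not be smaller than the available margin $|\sec^g|$.
\item Suppose instead you allow a corner at $t=0$ with $-\secf^+=A\,h_0$ and $A$ large. The eigenvalues of $\secf^-$ relative to $-(\secf^++\secf^-)$ are $\kappa_i/(A-\kappa_i)$. So hypothesis (iii) becomes $A(\kappa_1+\kappa_2)\ge 2\kappa_1\kappa_2$, which is false for large $A$ once $\kappa_1+\kappa_2<0$.
\item Index at most $1$ allows $\secf_0$ to restrict to $\diag(-1,0)$ on some $2$-plane. Then $\secf_0$ is $2$-convex with respect to no metric at all.
\end{enumerate}
In short, isotropic fast growth never actually uses the index hypothesis. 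Your conformal-class interpolation with long intervals addresses a side issue, not this one.

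\textbf{What is missing.}
\begin{enumerate}
\item Push the boundary slightly outward in a smooth extension of $g$. By the Riccati equation $\secf_r'=\secf_r^2-R(\partial_r,\cdot,\cdot,\partial_r)$ and negative curvature, $\secf_r>\secf_0$. Hence the positive index of inertia becomes at least $n-2$, which removes the degenerate case above.
\item Apply Lemma \ref{lem:two-convex-metric} to get an anisotropic metric $B$ for which $\secf_r$ is strictly $2$-convex. Pointwise, $B=\diag(L,1,\dots,1)$ in an eigenbasis, and these local choices are glued by a partition of unity.
\item Rescale $g$ by a small constant so that its boundary metric $\gamma_0$ is small compared with $\gamma$. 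Then attach the quasi-spherical neck of Lemma \ref{Lem: NSecCcobordismlm} along the monotone path $\gamma_0+tB+t^2D$. Its inner second fundamental form is $-\frac{1}{2u(0)}B$, so Proposition \ref{practical_use} and Theorem \ref{thm: smoothing sec} apply. Its outer boundary is strictly convex. Finally, rescale back.
\end{enumerate}
This also removes your conformal-class difficulty without any long-interval interpolation. The monotone path reaches $\gamma$ directly, and after rescaling the new boundary carries $\lambda^2\gamma$, ready for your Step 2.
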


Here, \emph{asymptotically hyperbolic} (AH) means conformally compact with sectional curvatures tending to $-1$ at infinity; see Subsection \ref{Prescribing conformal infinity} for details. This notion is sometimes called \emph{weakly asymptotically hyperbolic}, to distinguish it from stronger notions of hyperbolic asymptotics; see, for example, \cite{Wang X}.

Interestingly, Theorem \(\ref{Thm:conformally compact}\) also plays a key role in the proof of Theorem \(\ref{Thm: structure}\) (i). By prescribing conformal infinities represented by metrics that are flat near selected points, a slight refinement of the construction makes the corresponding AH extensions exactly hyperbolic near those points. This permits a Maskit-type gluing at conformal infinity; cf. \cite{ChruscielDelay,ILS,MazzeoPacard}. By attaching boundary \(1\)-handles in this way, we obtain complete negatively curved isometric extensions with noncylindrical added regions. This answers Pigola and Veronelli's question \cite[Remark~6.6]{PV} of whether the added region must have cylindrical topology.

Theorems \ref{Thm: structure} (ii) and \ref{Thm:conformally compact} show that every vector bundle over a closed manifold carrying a metric with negative sectional curvature (respectively, negative curvature operator) admits a complete AH metric with the same curvature condition. Anderson \cite{Anderson} previously proved the existence of complete metrics of negative sectional curvature on such bundles. The proof of Theorem \ref{Thm: structure} (ii) requires only a local construction near the zero section, which Theorem \ref{Thm:conformally compact} turns into a global extension, illustrating the local-to-global nature of our method. 

Our final application of the extension method concerns an isometric realization problem. The classical prototype is the Weyl problem: Nirenberg proved that every smooth metric of positive Gauss curvature on $\mathbb S^2$ admits a strictly convex isometric embedding into $\mathbb R^3$ \cite{Nirenberg}. The hyperbolic version, due to Aleksandrov and Pogorelov \cite{Alex,Pog}, gives such an embedding into $\mathbb H^3$ for every smooth metric on $\mathbb S^2$ with Gauss curvature greater than $-1$. For a torus or a higher-genus surface, the hyperbolic extension results stated above \cite{Labourie,Schlenker}, followed by the attachment of the canonical hyperbolic end, yield a strictly convex isometric embedding into a complete handlebody interior of the same genus with constant negative sectional curvature.

We prove a higher-dimensional analogue in which the ambient metric is allowed to have variable negative curvature. Combining the preceding metric-extension theorem with the AH extension theorem gives the following.

\begin{theorem}\label{Thm: realization} Let \(M\in\mathcal C_n\) (respectively, \(\mathcal C'_n\)). For every smooth metric \(\gamma\) on \(\partial M\), there exist an asymptotically hyperbolic metric \(g\) on \(\mathring M\) with negative sectional curvature (respectively, negative curvature operator) and a smooth isometric embedding
\(
X\colon(\partial M,\gamma)\hookrightarrow(\mathring M,g).
\)
Moreover, the second fundamental form of \(X(\partial M)\) with respect to the inward normal is positive definite.
\end{theorem}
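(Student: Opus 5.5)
The plan is to combine Theorem \ref{Thm:main 2} with Theorem \ref{Thm:conformally compact}.

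First, apply Theorem \ref{Thm:main 2} to \(M\in\mathcal C_n\) (respectively \(\mathcal C_n'\)) and the given boundary metric \(\gamma\). This produces a smooth metric \(g_0\) on \(M\) with the following properties:
\begin{itemize}
\item \(g_0\) has negative sectional curvature (respectively, negative curvature operator);
\item \(g_0|_{\partial M}=\gamma\);
\item \(\partial M\) is strictly convex and umbilical.
\end{itemize}
In particular, the second fundamental form of \(\partial M\) with respect to the inward normal is positive definite. So the boundary index is \(0\), and the hypothesis of Theorem \ref{Thm:conformally compact} (index at most \(1\)) holds trivially.

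Second, apply Theorem \ref{Thm:conformally compact} to \((M,g_0)\), choosing any conformal infinity, say \([\gamma]\) itself. This gives a complete asymptotically hyperbolic metric \(g\) on \(\mathring M\) with the same curvature condition, together with an isometric embedding \(Y\colon(M,g_0)\hookrightarrow(\mathring M,g)\). Define \(X:=Y|_{\partial M}\). Since \(Y\) is a Riemannian isometric embedding, \(X^*g=g_0|_{\partial M}=\gamma\), so \(X\) is a smooth isometric embedding of \((\partial M,\gamma)\).

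Third, we verify convexity. Since \(Y\) is an isometry onto its image, it preserves Levi-Civita connections and normals. The inward normal of \(\partial M\) in \(M\) is carried to the normal of \(X(\partial M)\) pointing into \(Y(M)\). Hence the second fundamental form of \(X(\partial M)\) in \((\mathring M,g)\), computed with this normal, equals that of \(\partial M\) in \((M,g_0)\), which is positive definite.

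The only step needing care is the meaning of ``inward normal'' for the hypersurface \(X(\partial M)\subset\mathring M\). It is a closed hypersurface bounding the compact region \(Y(M)\), with complement \(\mathring M\setminus Y(M)\) a collar going out to infinity. The inward normal should therefore be read as the one pointing into the compact side \(Y(M)\), and with this reading the positivity follows directly from the previous step. There is no genuine analytic obstacle: all the work is contained in Theorems \ref{Thm:main 2} and \ref{Thm:conformally compact}.
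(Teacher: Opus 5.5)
Your proposal is correct and follows the paper's proof essentially verbatim: obtain $g_0$ from Theorem~\ref{Thm:main 2} with $g_0|_{\partial M}=\gamma$ and strictly convex boundary, apply Theorem~\ref{Thm:conformally compact} to $(M,g_0)$, and restrict the resulting isometric embedding to $\partial M$. Your extra remarks on why the second fundamental form and the inward normal are preserved under the embedding just make explicit what the paper leaves implicit.
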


As a concrete consequence, every smooth metric on
\(
\mathop{\#}_{j=1}^{k}
\bigl(\mathbb S^{n-1}\times\mathbb S^1\bigr)
\) admits a strictly convex isometric embedding into the interior of an \((n+1)\)-dimensional \(1\)-handlebody of genus \(k\), equipped with an AH metric with negative curvature operator. In general, the ambient manifold cannot be required to be simply connected; see Remark \(\ref{can'tsimconnec}\). For spheres, however, the ambient manifold can be chosen to be simply connected and hyperbolic outside a compact set.

\begin{theorem}\label{Thm: sphererealization}
For every smooth metric \(\gamma\) on \(\mathbb S^n\), there exist a complete metric \(g\) on \(\mathbb R^{n+1}\) with negative curvature operator and a smooth isometric embedding
\(
X\colon(\mathbb S^n,\gamma)\hookrightarrow(\mathbb R^{n+1},g).
\)
Moreover, \(g\) has constant negative sectional curvature outside a compact set, and the second fundamental form of \(X(\mathbb S^n)\) with respect to the inward normal is positive definite.
\end{theorem}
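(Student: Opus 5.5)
We derive Theorem \ref{Thm: sphererealization} from Theorems \ref{Thm:main 2} and \ref{Thm:conformally compact}, applied to $M=\mathbb D^{n+1}$ with the round conformal infinity. The ball lies in $\mathcal C'_{n+1}$: a geodesic ball in hyperbolic space $\mathbb H^{n+1}$ has curvature operator $-\mathrm{Id}$ and strictly convex boundary, so its index is $0$. By Theorem \ref{Thm:main 2}, $\gamma$ extends to a metric $g_0$ on $\mathbb D^{n+1}$ with negative curvature operator and strictly convex umbilical boundary. Its second fundamental form with respect to the inward normal is then $\lambda\gamma$ for a positive function $\lambda$.

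Next apply Theorem \ref{Thm:conformally compact} to $(\mathbb D^{n+1},g_0)$, with conformal infinity the round class $[\gamma_{\mathrm{round}}]$ on $\partial\mathbb D^{n+1}$. The boundary index is $0\le 1$, so the theorem applies. It gives a complete AH metric $\tilde g$ on $\mathring{\mathbb D}^{n+1}\cong\mathbb R^{n+1}$ with negative curvature operator, together with an isometric embedding $X\colon(\mathbb D^{n+1},g_0)\hookrightarrow(\mathbb R^{n+1},\tilde g)$. Restrict $X$ to $\partial\mathbb D^{n+1}=(\mathbb S^n,\gamma)$; this is an isometric embedding of $(\mathbb S^n,\gamma)$. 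Because $X$ is an isometric embedding of the whole Riemannian manifold $(\mathbb D^{n+1},g_0)$, the second fundamental form of $X(\mathbb S^n)$ with respect to the normal pointing into $X(\mathbb D^{n+1})$ equals that of $\partial\mathbb D^{n+1}$ in $g_0$, namely $\lambda\gamma>0$. This gives positive definiteness. I read ``inward'' as pointing into the bounded region $X(\mathbb D^{n+1})$, consistent with the paper's convention.

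It remains to show that $\tilde g$ has constant negative curvature outside a compact set. Asymptotic hyperbolicity alone gives only curvature tending to $-1$, so this is the main obstacle. The paper mentions a refinement: when the prescribed conformal infinity is flat near selected points, the AH extension can be made exactly hyperbolic near those points. I would use the global version of that refinement. The round conformal class on $\mathbb S^n$ is the conformal infinity of $\mathbb H^{n+1}$, so I expect the construction behind Theorem \ref{Thm:conformally compact} can be arranged to agree with the hyperbolic metric outside a compact set.

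Concretely, I would try to redo that construction as follows. Take a large geodesic ball $B_R\subset\mathbb H^{n+1}$, whose boundary is round, umbilic and strictly convex. Build a negatively curved collar, with negative curvature operator, that joins the boundary of $(\mathbb D^{n+1},g_0)$ to $\partial B_R$. Since both boundaries are strictly convex and umbilic and both manifolds are spheres, one can use warped-product necks of the form $dt^2+f(t)^2\gamma_t$, with $\gamma_t$ interpolating from $\gamma$ to a round metric, and $f$ growing fast enough that the curvature stays negative. The corners are then smoothed using the upper-curvature-bound smoothing mentioned in the abstract. Finally attach the complement $\mathbb H^{n+1}\setminus B_R$. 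The resulting metric is exactly hyperbolic outside a compact set and has conformal infinity the round sphere. The delicate point is checking that the interpolation through non-round $\gamma_t$ keeps the curvature operator negative. I would control this by choosing $f$ with $f''/f$ and $(f'/f)^2$ dominating the intrinsic curvature of $\gamma_t$ and the $t$-derivatives of $\gamma_t$. This is the same mechanism used in the collar Lemma \ref{Lem: NSecCcobordismlm}.
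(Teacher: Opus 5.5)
Your first part is correct and matches the paper. $\mathbb D^{n+1}\in\mathcal C'_{n+1}$, Theorem \ref{Thm:main 2} gives $g_0$, and the AH construction with round conformal infinity gives $\tilde g$ on $\mathbb R^{n+1}$. Positivity of the second fundamental form of $X(\mathbb S^n)$ is inherited from $g_0$.

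The gap is in the last step, and it is not where you place it. The non-round interpolation is already handled inside Theorems \ref{Thm:main 2} and \ref{Thm:conformally compact}. The real difficulty is the corner at $\partial B_R$. Suppose your collar ends in round slices $dt^2+f(t)^2\gamma_{\rm round}$ with $f(T)=\sinh R$. Its outer second fundamental form, with normal pointing into the collar, is $f(T)f'(T)\gamma_{\rm round}$. Seen from $\mathbb H^{n+1}\setminus B_R$, the sphere $\partial B_R$ has second fundamental form $-\sinh R\cosh R\,\gamma_{\rm round}$. Condition (ii) of Theorem \ref{thm: smoothing sec} therefore forces
\[
f'(T)<\cosh R=\sqrt{1+f(T)^2},\qquad\text{i.e.}\qquad \frac{f'(T)}{f(T)}<\coth R .
\]
By Theorem \ref{Kos2}, the non-strict version of (ii) is necessary even for the glued metric to be locally $\mathrm{CAT}(0)$. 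At the same time, negative curvature on the round part needs $f''>0$ and $f'>1$. So the collar's outer boundary must be \emph{less} convex than the round sphere of the same size in $\mathbb H^{n+1}$. Your mechanism of making $f$ grow fast, so that $f''/f$ and $(f'/f)^2$ dominate, pushes in exactly the wrong direction. Taking $R$ large does not help, since the constraint reads $f'(T)/f(T)<\coth R\to 1$. For example, the end built in the proof of Theorem \ref{Thm:conformally compact'} satisfies $f'^2-f^2\equiv 4a(a+\lambda)$ with $a$ chosen large, so it cannot be cut off and glued to $\mathbb H^{n+1}\setminus B_R$ for any $R$. Your proposal has no argument reconciling these requirements.

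The missing idea is to use the freedom in the constant: the theorem asks only for \emph{some} constant negative curvature. The paper keeps the round warped end $dt^2+f(t)^2\gamma_{\rm std}$, with $f,f'>0$, produced by the AH construction, and truncates it at $t=1$. It then attaches the exterior of a geodesic ball in the space form of curvature $-\kappa^2$, with radius fixed by $\kappa^{-1}\sinh(\kappa r_\kappa)=f(1)$. That boundary has second fundamental form $-f(1)\sqrt{1+\kappa^2f(1)^2}\,\gamma_{\rm std}$, which dominates $f(1)f'(1)\gamma_{\rm std}$ once $\kappa$ is large. Condition (iii) then follows from Proposition \ref{practical_use}, because the other side is positive definite. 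If you want curvature exactly $-1$, you could instead bend $f$ in the round region with $0<f''<f$. Then $f'^2-f^2$ decreases to $1$ while $f'>1$, and you continue with $f=\sinh(t+t_0)$; but this must be argued.

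There is also a smaller issue. In your redone construction the collar is glued directly to $\partial\mathbb D^{n+1}$. The corner smoothing then alters $g_0$ near $X(\mathbb S^n)$, destroying both the isometric embedding and the convexity you established. You must first extend $g_0$ slightly past the boundary, or, as the paper does, make all modifications on the end of $\tilde g$, away from $X(\mathbb D^{n+1})$.
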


\subsection{Outline of the proof}
The proofs of the main results, Theorems \ref{Thm:main 1} and \ref{Thm:main 2}, build on the boundary-replacement argument of \cite{SWW2022}. We briefly outline the strategy. To solve the metric extension problem for a curvature condition \((\mathrm{C})\), we begin with an initial metric $g_{\rm {ini}}$ on $M$ that satisfies \((\mathrm{C})\) but imposes no {\emph{a~priori}} control on the boundary metric. 

When \((\mathrm{C})\) denotes positive Ricci curvature or negative sectional curvature, the existence of such an initial metric follows from Gromov \cite{Gro1969}; when \((\mathrm{C})\) denotes negative curvature operator, it follows from Streil’s result with pinching constant sufficiently close to \(1\) \cite{Streil2017}. However, in negative-curvature settings we further require the boundary to have index at most $1$ for the subsequent gluing-and-smoothing step; this is precisely why Theorem \ref{Thm:main 2} requires index at most $1$. 

With the initial metric in hand, the goal is to perform a boundary replacement so that \(\partial M\) ultimately carries the prescribed metric \(\gamma\) while \((\mathrm{C})\) is preserved. Following \cite{SWW2022}, we achieve this by attaching an {\it extension neck} along \(\partial M\) to \((M, g_{\mathrm{ini}})\), as illustrated in Figure \ref{Fig: replacing}.

\begin{figure}[htbp]
\centering
\includegraphics[width=6cm]{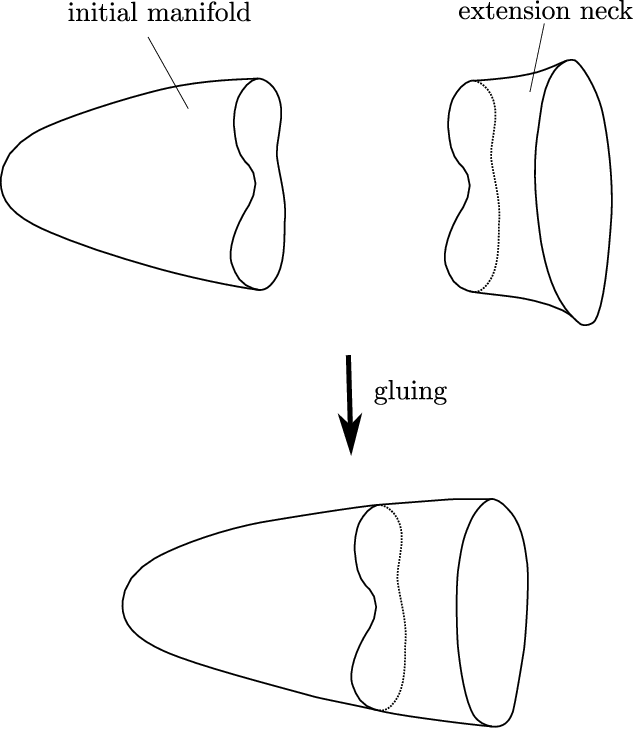}
\caption{The boundary-replacement argument}
\label{Fig: replacing}
\end{figure}

The boundary replacement argument consists of two steps. In the first step, we construct an extension neck satisfying:
\begin{itemize}[left=0.5cm]
\item [(i)]  The neck is diffeomorphic to $\partial M\times [0,1]$.
\item [(ii)] The neck satisfies the curvature condition $(\mathrm C)$.
\item [(iii)] The induced metrics on $\partial M\times\{0\}$ and $\partial M\times\{1\}$ coincide with  $g_{\rm{ini}}|_{\partial M}$ and $\gamma$, respectively. Moreover, the second fundamental form of $\partial M\times \{0\}$ satisfies additional conditions ensuring that $(\mathrm C)$ holds after gluing and smoothing.  
\end{itemize}

Our construction uses the quasi-spherical metric method. In the quasi-spherical ansatz, the curvature condition typically translates into an evolutionary partial differential inequality (PDI) formulation for the lapse function $u$, and the second fundamental form condition along $\partial M\times \{0\}$ is encoded as the initial value of $u$. Compared with the scalar curvature case, which reduces to a single PDI, our setting yields a PDI system for $u$ and is therefore substantially more complex, particularly in the negative curvature operator case.

For positive Ricci curvature, inspired by Gromov's simplification of the proof of Theorem \ref{PSCextension} (Section 3.12 of \cite{Gro2023}), we reduce the corresponding PDI system to a single ordinary differential inequality (ODI) for $u$. For negative curvature operator, control by an analogous but sign-reversed ODI is insufficient. However, the negative-curvature requirement allows us to exploit the Gauss equation for convex neck slices, yielding an additional curvature-control mechanism unavailable in positive-curvature settings. With the Gauss-equation control, we ultimately reduce the PDI system to an ODI supplemented by a smallness condition on $u$. Details are given in Lemmas \ref{Lem:PRicciCcobordismlm} and \ref{Lem: NSecCcobordismlm}, which may be of independent interest.

After gluing the desired extension neck to the initial manifold, one generally obtains a metric with corners; the second step is to smooth these corners while preserving the curvature condition $(\mathrm C)$. Perelman \cite{Per1997} resolved the positive-Ricci case under the condition that the sum of the second fundamental forms from the two sides of the gluing hypersurface is positive. 

For negative sectional curvature, however, the situation is more subtle: merely reversing the sign of this sum is insufficient to preserve negative curvature, even in the weaker Alexandrov sense (see Kosovskiĭ’s criterion \cite{Kos2}, recalled in Theorem \ref{Kos2} below). To preserve negative curvature under smoothing, we prove a smoothing theorem (Theorem \ref{thm: smoothing sec}) under a \(2\)-convexity assumption. This theorem also applies to metrics with negative curvature operator.

\subsection{Organization of the paper} In Section \ref{Sec: 2}, we construct the requisite extension necks with positive Ricci curvature and with negative curvature operator. In Section \ref{Sec: 3}, we establish a corner-smoothing theorem in negative curvature settings. In Section \ref{Sec: 4}, we prove the extension theorem for positive Ricci curvature (Theorem \ref{Thm:main 1}) and a non-extension result for stronger $k^{\rm th}$-intermediate Ricci curvatures (Theorem \ref{Thm: obstruction Ric_k}). In Section \ref{Sec: 5}, we first present a global obstruction to the metric extension problem for negative sectional curvature (Proposition \ref{Prop: global obstruction}) and prove the extension theorem for the classes \(\mathcal C_n\) and \(\mathcal C'_n\) (Theorem \ref{Thm:main 2}). We then construct AH extensions with prescribed conformal infinity and establish structural properties of these classes. Finally, we apply these results to isometric realization problems.

\subsection*{Acknowledgements}
The first-named author thanks Chengjie Yu for helpful conversations and for bringing the paper \cite{Streil2017} to his attention in 2023. The authors are grateful to Yuguang Shi for insightful discussions and constant encouragement. 

\subsection*{Development of the results and AI usage statement}

The authors began this project in May 2023. Preliminary versions of the two main results---Theorem \ref{Thm:main 1} (asserting $\Ric>0$, but not yet the stronger estimate $\Ric>Kg$ for any prescribed $K>0$) and Theorem \ref{Thm:main 2} (under a $2$-convexity hypothesis rather than the present index-at-most-one hypothesis and covering only the negative sectional curvature case)---together with their supporting lemmas, were obtained later that year. The authors subsequently strengthened these results and developed further applications. The first author has presented successive versions of these results at several conferences since 2023, including the 2024 Annual Conference of the Chinese Mathematical Society. 

All mathematical ideas, arguments, and results presented in this manuscript were developed independently by the authors, \textbf{without the use of generative AI}. Following completion of the initial draft, AI tools were used only to polish the language. The authors take full responsibility for the content of the manuscript.

\section{Neck construction}\label{Sec: 2}

Bartnik \cite{Bar1993} introduced the quasi-spherical metric method to construct metrics with prescribed scalar curvature along a foliation by spheres, hence the term ``quasi-spherical".  Shi and Tam \cite{ST2002} further developed this method in their proof of the positivity of the Brown--York mass. Building on this approach, Shi, Wei, and the first-named author \cite{SWW2022} established an extension lemma for positive scalar curvature. Inspired by Gromov's simplification of that proof (see \cite[Section 3.12]{Gro2023}), we observe that the quasi-spherical metric method can also be further developed to construct necks with positive Ricci curvature and necks with negative curvature operator, under suitable boundary conditions.

\subsection{Quasi-spherical metric}\label{subsec: quasi-spherical metric} We begin by reviewing some basic facts about quasi-spherical metrics (see \cite{Bar1993,SW,ST2002}).
\subsubsection{The set-up} 
Let $\Sigma$ be a closed $(n-1)$-dimensional manifold, and let $\mathcal C=\Sigma\times [a,b]$ be a cylinder over $\Sigma$. Assume that $\mathcal C $ is equipped with a smooth base metric $g_{\rm{base}}$ of the form
\[g_{\rm{base}}=dt^2+\gamma_t,\]
where $\{\gamma_t\}_{t\in[a,b]}$ is a smooth family of Riemannian metrics on $\Sigma$. Following Bartnik, we retain the term ``quasi-spherical” for this ansatz even when $\Sigma$ is not a sphere. By a quasi-spherical metric with respect to the base metric $g_{\rm{base}}$, we mean a deformation of $g_{\rm{base}}$ of the form
\begin{equation}\label{qsform}
g=u^2dt^2+\gamma_t,
\end{equation}
where $u$ is a smooth positive function on $\mathcal C$.
In other words, we modify the unit ``lapse function" while keeping the slice metrics $\gamma_t$ fixed.

\subsubsection{Notation and conventions}\label{subsubsec: notation and conventions}
We adopt the following notation and conventions throughout. Let $(M,g)$ be a Riemannian manifold. For its curvature tensor, we use the sign convention that for any $p\in M$ and $X,Y,Z,W\in T_pM$, $$R(X,Y)Z=\nabla_X\nabla_YZ-\nabla_Y\nabla_XZ-\nabla_{[X,Y]}Z$$ and $$R(X,Y,Z,W)=\langle R(X,Y)Z,W\rangle.$$
The Ricci curvature is obtained from the $(0,4)$-curvature tensor by contracting either the first and fourth indices or, equivalently, the second and third indices.

For symmetric bilinear forms $h$ and $k$ on a vector space $V$,
their Kulkarni--Nomizu product $h\owedge k$ is the symmetric
bilinear form on $\wedge^2V$ defined by
\[
\begin{aligned}
(h\owedge k)(u\wedge v,\,w\wedge z)
:={}&h(u,w)k(v,z)+h(v,z)k(u,w)\\
&-h(u,z)k(v,w)-h(v,w)k(u,z),
\end{aligned}
\]
for $u,v,w,z\in V$. We write
\[
h\wedge h:=\frac12\,h\owedge h,
\]
so that
\[
(h\wedge h)(u\wedge v,\,w\wedge z)
=h(u,w)h(v,z)-h(u,z)h(v,w).
\]
In particular, a Riemannian metric $g$ induces the metric
$g\wedge g$ on $\wedge^2TM$.
For convenience, we will sometimes continue to write $\langle\cdot,\cdot\rangle$ for $g\wedge g$ and $|\cdot|$ for the corresponding norm. The curvature tensor $R$ induces a symmetric bilinear form $\Rm$ on $\wedge^{2}TM$ via
\[
\Rm(X\wedge Y,\, Z\wedge W):=R(X,Y,W,Z),
\quad\  p\in M,\ X,Y,Z,W\in T_{p}M.
\]
The curvature operator $\mathcal R$ is defined by
\[
\langle \mathcal R\alpha,\, \beta\rangle=\Rm(\alpha,\beta),
\quad\ p\in M,\ \alpha,\beta\in \wedge^{2}T_{p}M.
\]

For a symmetric bilinear form $h$, we write $h>0$ if it is positive definite. For $K\in\mathbb R$, we write $\mathcal R\leq K$ if all eigenvalues of $\mathcal R$ are at most $K$ pointwise; equivalently,
\[
\Rm\le Kg\wedge g.
\]

We now turn to the quasi-spherical metric setting on $\Sigma\times[a,b]$ with metric $g$ of the form \eqref{qsform}. The same conventions also apply in Section~\ref{Sec: 3} (corner smoothing): there, $u\equiv 1$, and $\{\gamma_t\}$ is smooth on each side of the corner but only Lipschitz across it.

For any $t$-dependent function or tensor field $T$, we write $T':=\partial_t T$. A semicolon followed by an index denotes covariant differentiation. Using the product structure, we identify each slice $\Sigma_t:=\Sigma\times\{t\}$ with $\Sigma$, and freely view objects tangent to $\Sigma_t$ as objects on $\Sigma$, and conversely. Let $\{e_i\}_{i=1}^{n-1}$ be a local frame on $\Sigma$. Via the above identification, we also view it as a local frame on each slice $\Sigma_t$, so that $[e_i,\partial_t]=0$. Set $e_n=u^{-1}\partial_t$. The indices $i,j,k,\ell$ range from $1$ to $n-1$. Indices are raised with the slice metric $\gamma_t$, and $(\gamma'_t)^2$ denotes the $(0,2)$-tensor field on $\Sigma_t$ defined by 
$$(\gamma'_t)^2_{ij}=(\gamma'_t)_{ik}(\gamma'_t)^k_j.$$ Let $\secf_t$ denote the second fundamental form of $\Sigma_t$ induced by $g$ with respect to the unit normal $e_n$, namely, 
$$\secf_t(e_i,e_j)=\langle\nabla^g_{e_i}e_j,e_n\rangle.$$

\subsubsection{Curvature relations}
In this subsection, we compute the curvature of the quasi-spherical metric $g$ in terms of the family $\{\gamma_{t}\}_{t\in[a,b]}$ and the lapse function $u$. For our purposes, we assume that $u$ depends only on $t$. 

\begin{proposition}\label{Lem: curvature relations}
The following relations hold:
\begin{equation}\label{secf}
\secf_t=-\frac{1}{2u}\gamma'_t,
\end{equation}
\begin{equation}\label{tn}
R^g_{nijn}=\frac{u'}{2u^3}\left(\gamma'_t\right)_{ij}+\frac{1}{4u^2}\left(\gamma'_t\right)^2_{ij}-\frac{1}{2u^2}\left(\gamma''_t\right)_{ij},
\end{equation}
\begin{equation}\label{tt}
R^g_{ijkl}=R^{\gamma_t}_{ijkl}-\frac{1}{4u^2}\left[\left(\gamma'_t\right)_{il}\left(\gamma'_t\right)_{jk}-\left(\gamma'_t\right)_{ik}\left(\gamma'_t\right)_{jl}\right],
\end{equation}
\begin{equation}\label{mixed}
R^g_{ijkn}=\frac{1}{2u}\left[\left(\nabla^{\gamma_t}\gamma'_t\right)_{ik;j}-\left(\nabla^{\gamma_t}\gamma'_t\right)_{jk;i}\right].
\end{equation}
\end{proposition}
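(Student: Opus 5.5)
The plan is to compute the Levi-Civita connection of $g=u(t)^2dt^2+\gamma_t$ directly in the frame $\{e_1,\dots,e_{n-1},\partial_t\}$, where $[e_i,\partial_t]=0$. I would then read off the four relations from the Gauss equation, the Codazzi equation and a Riccati-type (radial curvature) equation for the foliation $\{\Sigma_t\}$. Throughout I use the paper's conventions: $R(X,Y)Z=\nabla_X\nabla_YZ-\nabla_Y\nabla_XZ-\nabla_{[X,Y]}Z$, $R(X,Y,Z,W)=\langle R(X,Y)Z,W\rangle$, and $\secf_t(e_i,e_j)=\langle\nabla_{e_i}e_j,e_n\rangle$ with $e_n=u^{-1}\partial_t$.

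\emph{Step 1 (connection and \eqref{secf}).} By the Koszul formula, using $g(e_j,\partial_t)=0$ and $[e_i,\partial_t]=0$, we get $\langle\nabla_{e_i}e_j,\partial_t\rangle=-\tfrac12\partial_t\gamma_t(e_i,e_j)$. Dividing by $u$ gives $\secf_t=-\tfrac1{2u}\gamma'_t$. The same formula gives:
\begin{itemize}
\item $\nabla_{e_i}\partial_t=\nabla_{\partial_t}e_i=\tfrac12(\gamma'_t)_i^{\ k}e_k$;
\item $\langle\nabla_{\partial_t}\partial_t,e_i\rangle=-\tfrac12 e_i(u^2)=0$, because $u$ depends only on $t$;
\item $\langle\nabla_{\partial_t}\partial_t,\partial_t\rangle=uu'$.
\end{itemize}
Hence $\nabla_{e_n}e_n=0$, so the integral curves of $e_n$ are unit-speed geodesics.

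\emph{Step 2 (tangential and mixed components).} Relation \eqref{tt} is the Gauss equation, $R^g_{ijkl}=R^{\gamma_t}_{ijkl}\pm\big(\secf_{il}\secf_{jk}-\secf_{ik}\secf_{jl}\big)$. I would fix the sign against the convention $\Rm(X\wedge Y,Z\wedge W)=R(X,Y,W,Z)$, for instance by checking the round sphere in $\mathbb R^n$. Substituting $\secf=-\gamma'/(2u)$ then produces the factor $\tfrac1{4u^2}$ with the stated sign.

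Relation \eqref{mixed} is the Codazzi equation: $R^g_{ijkn}$ equals, up to sign, $(\nabla^{\gamma_t}\secf)_{jk;i}-(\nabla^{\gamma_t}\secf)_{ik;j}$. Since $u$ is constant on each slice, the factor $-\tfrac1{2u}$ passes through the covariant derivative, which gives \eqref{mixed}.

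\emph{Step 3 (normal components, the main obstacle).} For $R^g_{nijn}$ I would compute $R(\partial_t,e_i)e_j$ directly and pair it with $\partial_t$, then divide by $u^2$. The ingredients are:
\begin{itemize}
\item $\langle\nabla_{e_i}e_j,\partial_t\rangle=-\tfrac12\gamma'_{ij}$;
\item the product rule for $\partial_t\langle\nabla_{e_i}e_j,\partial_t\rangle$;
\item $\nabla_{\partial_t}\partial_t=\tfrac{u'}{u}\partial_t$.
\end{itemize}
The term $\langle\nabla_{\partial_t}\nabla_{e_i}e_j,\partial_t\rangle$ yields $-\tfrac12\gamma''_{ij}$ minus $\langle\nabla_{e_i}e_j,\nabla_{\partial_t}\partial_t\rangle=-\tfrac{u'}{2u}\gamma'_{ij}$. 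The term $\langle\nabla_{e_i}\nabla_{\partial_t}e_j,\partial_t\rangle$ yields $-\tfrac14(\gamma'_t)^2_{ij}$. Collecting these and dividing by $u^2$ should give exactly
\[
R^g_{nijn}=\frac{u'}{2u^3}\gamma'_{ij}+\frac1{4u^2}(\gamma'^2)_{ij}-\frac1{2u^2}\gamma''_{ij}.
\]

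This last bookkeeping is the only delicate point: the sign of the $u'$ term and the coefficient of $(\gamma')^2$ are easy to get wrong. I would therefore verify the result on two test cases:
\begin{itemize}
\item $\gamma_t=t^2\gamma_{\mathbb S^{n-1}}$ with $u=1$, where the left side must vanish (flat space);
\item a reparametrization $t=\phi(s)$ of a product metric, where $u'$ must combine correctly with $\gamma''$.
\end{itemize}
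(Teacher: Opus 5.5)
Your proposal is correct and follows essentially the paper's route: the Koszul formula gives the connection and the second fundamental form formula, the Gauss and Codazzi equations give the tangential and mixed components, and a direct differentiation gives $R^g_{nijn}$. The only cosmetic difference is that you pair $R(\partial_t,e_i)e_j$ with $\partial_t$, while the paper pairs $R(\partial_t,e_i)\partial_t$ with $e_j$; these are equivalent by skew-symmetry in the last two slots, and your Step 3 bookkeeping checks out.

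One point to tighten: with the stated conventions the Codazzi sign is not actually ambiguous. A direct derivation gives $R^g_{ijkn}=(\nabla^{\gamma_t}\secf_t)_{jk;i}-(\nabla^{\gamma_t}\secf_t)_{ik;j}$, and substituting $\secf_t=-\frac{1}{2u}\gamma'_t$ then yields the stated mixed formula exactly. You should state this sign rather than leave it "up to sign".
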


\begin{proof}
In our setting,
$$\left\langle e_i,e_j\right\rangle=\left(\gamma_t\right)_{ij},\quad \left\langle e_i,\partial_t\right\rangle=0,\quad \left\langle \partial_t,\partial_t\right\rangle=u^2.$$
By the Koszul formula, we have
\begin{equation*}
\left\langle\nabla^g_{e_i}\partial_t,e_j\right\rangle=\frac{1}{2}\left(\gamma'_t\right)_{ij},\quad \ \left\langle\nabla^g_{e_i}\partial_t,\partial_t\right\rangle=0,
\end{equation*}
and 
\begin{equation*}
\left\langle\nabla^g_{\partial_t}\partial_t,\partial_t\right\rangle=uu',\quad \  \left\langle\nabla^g_{\partial_t}\partial_t,e_i\right\rangle=0.
\end{equation*}
Therefore,  
\begin{equation*}
\nabla^g_{e_i}\partial_t=\frac{1}{2}\left(\gamma_t\right)^{kl}\left(\gamma'_t\right)_{ik}e_l,\quad\ 
\nabla^g_{\partial_t}\partial_t=\frac{u'}{u}\partial_t.
\end{equation*}
Recalling that $e_n=u^{-1}\partial_t$, we obtain
\begin{equation*}
\secf_t(e_i,e_j)=\frac{1}{u}\langle\nabla^g_{e_i}e_j,\partial_t\rangle=-\frac{1}{u}\langle\nabla^g_{e_i}\partial_t,e_j\rangle=-\frac{1}{2u}\gamma'_t(e_i,e_j).
\end{equation*}
Differentiating $\nabla^g_{e_i}\partial_t$ once more in the $t$-direction, we compute
\begin{equation}\label{sec23}
\begin{split}
\left\langle\nabla^g_{\partial_t}\nabla^g_{e_i}\partial_t,e_j\right\rangle&=\partial_t\left\langle\nabla^g_{e_i}\partial_t,e_j\right\rangle-\left\langle\nabla^g_{e_i}\partial_t,\nabla^g_{\partial_t}e_j\right\rangle\\
&=\frac{1}{2}\left(\gamma''_t\right)_{ij}-\frac{1}{4}\left(\gamma'_t\right)^2_{ij},
\end{split}
\end{equation}
and
\begin{equation}\label{sec24}
\left\langle\nabla^g_{e_i}\nabla^g_{\partial_t}\partial_t,e_j\right\rangle=\frac{u'}{u}\left\langle\nabla^g_{e_i}{\partial_t},e_j\right\rangle=\frac{u'}{2u}\left(\gamma'_t\right)_{ij}.
\end{equation}
Combining \eqref{sec23} and \eqref{sec24}, and noting that $[e_i,\partial_t]=0$, we get
\begin{equation*}
R^g_{nijn}=\frac{u'}{2u^3}\left(\gamma'_t\right)_{ij}-\frac{1}{2u^2}\left(\gamma''_t\right)_{ij}
+\frac{1}{4u^2}\left(\gamma'_t\right)^2_{ij}.
\end{equation*}
By the Gauss equation,
\begin{equation*}
R^g_{ijkl}=R^{\gamma_t}_{ijkl}-\left(\secf_t\right)_{il}\left(\secf_t\right)_{jk}+\left(\secf_t\right)_{ik}\left(\secf_t\right)_{jl}.
\end{equation*}
By the Codazzi equation,
\begin{equation*}
R^g_{ijkn}=\left(\nabla^{\gamma_t}\secf_t\right)_{jk;i}-\left(\nabla^{\gamma_t}\secf_t\right)_{ik;j}.
\end{equation*}
Finally, substituting the relation \eqref{secf} into the Gauss and Codazzi equations, we obtain \eqref{tt} and \eqref{mixed}.
This completes the proof.
\end{proof}

\subsection{Neck construction} In this subsection, we use quasi-spherical metrics to construct extension necks in two settings: positive Ricci curvature and negative curvature operator.

\subsubsection{Necks with positive Ricci curvature}
\begin{lemma}\label{Lem:PRicciCcobordismlm}
Let $\Sigma$ be a closed $(n-1)$-dimensional manifold, and let $\{\gamma_t\}_{t\in[0,1]}$ be a smooth family of Riemannian metrics on $\Sigma$. Assume that $\gamma'_t<0$ for all $t\in[0,1]$. Fix $K>0$. Then there
exists a smooth Riemannian metric $g$ on $\Sigma\times[0, 1]$ with the following properties: 
\begin{itemize}[left=0.5cm]
\item [(1)] the slice metrics are prescribed by $\{\gamma_t\}_{t\in [0,1]}$, namely
$$g|_{\Sigma\times\{t\}}=\gamma_t;$$
\item [(2)] the Ricci curvature satisfies $\Ric^g\geq Kg;$
\item [(3)] the second fundamental form of $\Sigma\times\{0\}$ with respect to the inward unit normal
satisfies $\secf \geq K\gamma_0.$
\end{itemize}
\end{lemma}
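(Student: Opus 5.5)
The plan is to look for $g$ as a quasi-spherical metric $g=u(t)^2dt^2+\gamma_t$ on $\Sigma\times[0,1]$, with a lapse $u$ depending only on $t$, and to reduce conditions (2) and (3) to an ordinary differential inequality for $u$. Property (1) then holds automatically. It is convenient to set $w:=u^{-2}$, so that $u'/u^3=-w'/2$.

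\textbf{Boundary condition (3).} At $t=0$ the inward unit normal is $e_n=u^{-1}\partial_t$. By \eqref{secf},
\[
\secf_0=-\tfrac{1}{2u(0)}\gamma'_0 .
\]
Since $\gamma'_0<0$ and $\Sigma$ is compact, there is $c>0$ with $-\gamma'_t\ge c\gamma_t$ for all $t\in[0,1]$. Hence (3) holds as soon as $u(0)\le c/(2K)$, that is, as soon as $w(0)$ is large.

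\textbf{Curvature in terms of $w$.} Proposition \ref{Lem: curvature relations} gives
\[
R^g_{nijn}=-\tfrac{w'}{4}(\gamma'_t)_{ij}+w\Bigl(\tfrac14(\gamma'_t)^2-\tfrac12\gamma''_t\Bigr)_{ij}.
\]
Let $C$ denote a constant depending only on the family $\{\gamma_t\}$, and not on $u$. Using $-\gamma'_t\ge c\gamma_t$, we get
\[
R^g_{nijn}\ge \tfrac{c}{4}w'\gamma_{ij}-Cw\gamma_{ij}.
\]
Contracting \eqref{tn} and \eqref{tt} in the orthonormal frame $\{e_1,\dots,e_{n-1},e_n\}$ gives the following three estimates.
\begin{itemize}[left=0.5cm]
\item \emph{Normal component.} $\Ric^g(e_n,e_n)\ge \tfrac{c(n-1)}{4}w'-Cw$.
\item \emph{Tangential components.} The tangential Ricci is $\Ric^{\gamma_t}$, plus $w$ times a quadratic expression in $\gamma'_t$, plus $R^g_{nijn}$. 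This yields $\Ric^g|_{T\Sigma}\ge (\tfrac{c}{4}w'-Cw-C)\gamma_t$.
\item \emph{Mixed components.} By \eqref{mixed}, $|\Ric^g(e_i,e_n)|\le C u^{-1}=C\sqrt w$.
\end{itemize}

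\textbf{Choice of $w$.} Set $\lambda:=\tfrac c4 w'-Cw-C-K$. The symmetric form $\Ric^g-Kg$ has diagonal blocks bounded below by $\lambda$ and off-diagonal entries of size $O(\sqrt w)$. It is therefore positive semidefinite provided $\lambda\ge0$ and $\lambda^2\ge C' w$. I would take $w(t)=Ae^{\Lambda t}$. Then
\[
\lambda=\Bigl(\tfrac{c\Lambda}{4}-C\Bigr)w-C-K .
\]
First fix $\Lambda$ large, so that $\tfrac{c\Lambda}4-C\ge \tfrac{c\Lambda}8$. Then choose $A$ large, so that $\lambda\ge \tfrac{c\Lambda}{16}w$ and $(\tfrac{c\Lambda}{16})^2w^2\ge C'w$ for all $w\ge A$, and so that $u(0)=A^{-1/2}\le c/(2K)$. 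Then $u=w^{-1/2}$ is smooth and positive, and (1)–(3) hold.

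\textbf{Main obstacle.} The main obstacle is the off-diagonal term $\Ric^g(e_i,e_n)$. It cannot be sign-controlled; it can only be dominated. The key observation is that it grows only like $\sqrt w$, while the diagonal gain grows like $w'\sim\Lambda w$. So a rapidly growing $w$, meaning a rapidly shrinking lapse, absorbs it through the $2\times2$ block (Cauchy–Schwarz) criterion. I would take care that all constants $C$ are uniform in $t\in[0,1]$, which follows from compactness of $\Sigma\times[0,1]$ and smoothness of the family $\{\gamma_t\}$.
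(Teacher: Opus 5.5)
Your proposal is correct and follows essentially the same route as the paper. Both arguments use the quasi-spherical ansatz with a $t$-dependent lapse, and both take lower bounds on the normal and tangential Ricci blocks that gain from $-u'/u^3$. In both, the mixed term of size $O(1/u)$ is absorbed, by Young's inequality in the paper and by your $2\times 2$ block criterion. Both then choose an exponentially decaying lapse with small $u(0)$; your $w=Ae^{\Lambda t}$ is exactly the paper's $u=\varepsilon e^{-2Ct}$ rewritten via $w=u^{-2}$.

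The only implicit point is that your lower bound on $R^g_{nijn}$ uses $w'\ge 0$. Your exponential choice guarantees this, just as the paper stipulates $u'<0$.
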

\begin{remark}
This lemma strengthens the positive scalar curvature collar lemma in \cite{SWW2022}. In proving the positive scalar curvature extension theorem in \cite{SWW2022} and our positive Ricci curvature extension theorem (Theorem~\ref{Thm:main 1}), it suffices to prescribe only the endpoint metrics $\gamma_0$ and $\gamma_1$ with $\gamma_1<\gamma_0$. In that case, one may simply take the linear interpolation $\gamma_t=(1-t)\gamma_0+t\gamma_1$.
\end{remark}

\begin{proof}[Proof of Lemma \ref{Lem:PRicciCcobordismlm}]
We construct the desired metric $g$ in the quasi-spherical form \eqref{qsform} by choosing a suitable lapse function $u$.  Throughout this proof, $|\cdot|$ denotes the norm with respect to $g$; on tensors tangent to $\Sigma_t$, this agrees with the norm induced by $\gamma_t$.

We first compute the components of $\Ric^g$. Taking the $\gamma_t$-trace of \eqref{tn}, we obtain 
\begin{equation}\label{Riccinn}
\Ric^g_{nn}=\frac{u'}{2u^3}\tr_{\gamma_t}\gamma'_t+\frac{1}{4u^2}\tr_{\gamma_t}\left(\gamma'_t\right)^2-\frac{1}{2u^2}\tr_{\gamma_t} \gamma''_t.
\end{equation}
Contracting \eqref{tt} in the first and fourth indices with respect to $\gamma_t$ and then adding \eqref{tn}, we get 
\begin{equation}\label{Riccitt}
\begin{split}
\Ric^g_{ij}=\Ric^{\gamma_t}_{ij}+\frac{u'}{2u^3}\left(\gamma'_t\right)_{ij}+\frac{1}{2u^2}\left(\gamma'_t\right)^2_{ij}
-\frac{\tr_{\gamma_t}\gamma'_t}{4u^2}\left(\gamma'_t\right)_{ij}-\frac{1}{2u^2}\left(\gamma''_t\right)_{ij}.
\end{split}
\end{equation}
Contracting \eqref{mixed} in the second and third indices with respect to $\gamma_t$ yields
\begin{equation}\label{Riccimixed}
\Ric^g_{in}=\frac{1}{2u}\left(\Div_{\gamma_t}\gamma'_t-d\tr_{\gamma_t}\gamma'_t\right)_i.
\end{equation}

We now estimate $\Ric^g$. There exists $\Lambda>0$ such that, for all $t\in [0,1]$,
\begin{equation}\label{Ricciextensionupbound}
 \gamma'_t\geq-\Lambda\gamma_t,\quad \gamma''_t\leq\Lambda\gamma_t, \quad \Ric^{\gamma_t}\geq -\Lambda\gamma_t,\quad \big|\nabla^{\gamma_t}\gamma'_t\big|\leq\Lambda.
\end{equation}
Since $\gamma'_t<0$ for all $t\in [0,1]$, there exists $\lambda>0$ such that
\begin{equation}\label{Ricciextensionlowbound}
\gamma'_t\leq-\lambda\gamma_t.
\end{equation}
To match the negativity of $\gamma'_t$, we choose $u$ so that $u'<0$. 

Let $v$ be an arbitrary vector at a point of $\Sigma_t$. Decompose $v$ as 
$$v=v_T+se_n,$$
where $v_T$ is tangent to $\Sigma_t$ and $s\in\mathbb R$. Then
\begin{equation}\label{Riccigeneral}
\Ric^g(v,v)=s^2\Ric^g_{nn}+2s\Ric^g(v_T,e_n)+\Ric^g(v_T,v_T).
\end{equation}
By \eqref{Riccinn}, \eqref{Ricciextensionupbound}, and \eqref{Ricciextensionlowbound}, we have
\begin{equation}\label{Riccinnbound}
\Ric^g_{nn}\geq-\frac{\lambda u'}{2u^3}-\frac{C(n)\Lambda}{u^2},
\end{equation}
where $C(n)$ depends only on $n$ and may change from line to line. By \eqref{Riccitt}, \eqref{Ricciextensionupbound}, and \eqref{Ricciextensionlowbound}, we have
\begin{equation}\label{Riccittbound}
\Ric^g(v_T,v_T)\geq \left(-\frac{\lambda u'}{2u^3}-C(n)\frac{\Lambda^2+\Lambda}{u^2}-\Lambda\right)|v_T|^2.
\end{equation}
By \eqref{Riccimixed} and \eqref{Ricciextensionupbound}, we have
\begin{equation}\label{Riccimixedbound}
\left|\Ric^g(v_T,e_n)\right|\leq\frac{C(n)\Lambda }{u}|v_T|.
\end{equation}
Substituting \eqref{Riccinnbound}--\eqref{Riccimixedbound} into \eqref{Riccigeneral} yields
\begin{equation*}
\begin{split}
\Ric^g(v,v)\geq&\,\left(-\frac{\lambda u'}{2u^3}-\frac{C(n)\Lambda}{u^2}\right)s^2-\frac{C(n)\Lambda }{u}|s||v_T|\\
&+ \left(-\frac{\lambda u'}{2u^3}-C(n)\frac{\Lambda^2+\Lambda}{u^2}-\Lambda\right)|v_T|^2.
\end{split}
\end{equation*}
Applying Young’s inequality to the mixed term
$$\frac{\Lambda}{u}|s||v_T|\leq\frac{\Lambda s^2}{u^2}+\Lambda |v_T|^2,$$
we obtain
\begin{equation*}
\Ric^g(v,v)\geq\left(-\frac{\lambda u'}{2u^3}-\frac{C(n)\Lambda}{u^2}\right)s^2+\left(-\frac{\lambda u'}{2u^3}-C(n)\frac{\Lambda^2+\Lambda}{u^2}-C(n)\Lambda\right)|v_T|^2.
\end{equation*}
Since $|v|^2=|v_T|^2+s^2$, there exists a positive constant $C$ depending only on $\lambda$, $\Lambda$, $K$, and $n$ such that if $u$ satisfies
\begin{equation}\label{Rricciode}
u'\leq-C(u^3+u)
\end{equation}
on $[0,1]$, then $\Ric^g\geq Kg$ on $\Sigma\times[0,1]$. 

Finally, consider the second fundamental form of $\Sigma_0$. Since $e_n$ is the inward unit normal along $\Sigma_0$, we have $\secf=\secf_0$. Therefore, by \eqref{secf} and \eqref{Ricciextensionlowbound},
\[
\secf=-\frac{1}{2u(0)}\gamma'_0\geq\frac{\lambda}{2u(0)}\gamma_0.
\]
Hence, if 
\begin{equation}\label{Ricodeninitial value}
u(0)\leq\frac{\lambda}{2K},
\end{equation}
then property $(3)$ holds.

Take $u(t)=\varepsilon e^{-2Ct}$ with $\varepsilon=\min\{\lambda (2K)^{-1},1\}$. Then $u(0)\leq\lambda (2K)^{-1}$, which is precisely \eqref{Ricodeninitial value}. Moreover, $u(t)\le 1$ for $t\in[0,1]$, and thus
\[
u'=-2Cu\le-C(u^3+u),
\]
which verifies \eqref{Rricciode}. This completes the proof.
\end{proof}

\subsubsection{Necks with negative curvature operator} 
\begin{lemma}\label{Lem: NSecCcobordismlm}
Let $\Sigma$ be a closed $(n-1)$-dimensional manifold, and let $\{\gamma_t\}_{t\in[0,1]}$ be a smooth family of Riemannian metrics on $\Sigma$. Assume that $\gamma'_t>0$ for all $t\in[0,1]$. Fix $K>0$. Then there
exists a smooth Riemannian metric $g$ on $\Sigma\times[0, 1]$ with the following properties: 
\begin{itemize}[left=0.5cm]
\item [(1)] the slice metrics are prescribed by $\{\gamma_t\}_{t\in [0,1]}$, namely
$$g|_{\Sigma\times\{t\}}=\gamma_t;$$
\item [(2)] the curvature operator satisfies $\mathcal R^g\leq -K;$
\item [(3)] the second fundamental form of $\Sigma\times\{0\}$ with respect to the inward unit normal
satisfies $\secf\leq -K\gamma_0.$
\end{itemize}
\end{lemma}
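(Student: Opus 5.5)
We take $g$ in the quasi-spherical form \eqref{qsform}, $g=u^2dt^2+\gamma_t$, with a lapse $u=u(t)$ depending only on $t$. Property (1) then holds automatically, and we choose $u$ to force (2) and (3). By compactness there are constants $0<\lambda\le\Lambda$ such that, for all $t\in[0,1]$,
\[
\lambda\gamma_t\le\gamma'_t\le\Lambda\gamma_t,\qquad |\gamma''_t|\le\Lambda,\qquad |\nabla^{\gamma_t}\gamma'_t|\le\Lambda,\qquad \Rm^{\gamma_t}\le\Lambda\,\gamma_t\wedge\gamma_t .
\]

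\textbf{Property (3).} Along $\Sigma_0$ the inward normal is $e_n$. By \eqref{secf},
\[
\secf=-\frac{1}{2u(0)}\gamma'_0\le-\frac{\lambda}{2u(0)}\gamma_0 .
\]
So (3) holds once $u(0)\le\lambda/(2K)$.

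\textbf{Block decomposition of a bivector.} Write a bivector as $\alpha=\beta+w\wedge e_n$, where $\beta\in\wedge^2T\Sigma_t$ and $w\in T\Sigma_t$. Then
\[
\Rm(\alpha,\alpha)=\Rm(\beta,\beta)+2\Rm(\beta,w\wedge e_n)+\Rm(w\wedge e_n,w\wedge e_n),
\]
and each block is computed from Proposition \ref{Lem: curvature relations}.

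\emph{Normal block.} By \eqref{tn} and $u'<0$,
\[
\Rm(w\wedge e_n,w\wedge e_n)=\frac{u'}{2u^3}\gamma'(w,w)+\frac{1}{4u^2}(\gamma')^2(w,w)-\frac{1}{2u^2}\gamma''(w,w)\le\Big(\frac{\lambda u'}{2u^3}+\frac{C\Lambda^2}{u^2}\Big)|w|^2 .
\]

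\emph{Tangential block.} By \eqref{tt}, $\Rm|_{\wedge^2T\Sigma}=\Rm^{\gamma_t}-\frac{1}{4u^2}\gamma'_t\wedge\gamma'_t$. This is the Gauss-equation mechanism. Diagonalize $\gamma'_t$ with respect to $\gamma_t$, with eigenvalues $\mu_i\ge\lambda$. The form $\gamma'\wedge\gamma'$ then has eigenvalues $\mu_i\mu_j\ge\lambda^2$ on $e_i\wedge e_j$, so $\gamma'\wedge\gamma'\ge\lambda^2\gamma\wedge\gamma$. Hence
\[
\Rm(\beta,\beta)\le\Big(\Lambda-\frac{\lambda^2}{4u^2}\Big)|\beta|^2 .
\]

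\emph{Mixed block.} By \eqref{mixed}, $|\Rm(\beta,w\wedge e_n)|\le C\Lambda u^{-1}|\beta||w|$. Young's inequality gives
\[
\frac{2C\Lambda}{u}|\beta||w|\le\frac{\lambda^2}{8u^2}|\beta|^2+\frac{C'\Lambda^2}{\lambda^2}|w|^2 .
\]

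\textbf{Reduction to conditions on $u$.} Combining the three blocks,
\[
\Rm(\alpha,\alpha)\le\Big(\Lambda-\frac{\lambda^2}{8u^2}\Big)|\beta|^2+\Big(\frac{\lambda u'}{2u^3}+\frac{C\Lambda^2}{u^2}+C''\Big)|w|^2 .
\]
Therefore $\mathcal R^g\le-K$ follows from two conditions:
\begin{itemize}[left=0.5cm]
\item[(a)] a smallness condition $u\le u_0(\lambda,\Lambda,K)$, which makes the first coefficient at most $-K$;
\item[(b)] an ODI $u'\le-C(u^3+u)$ with $C$ large depending on $\lambda,\Lambda,K,n$, which makes the second coefficient at most $-K$ (multiply through by $u^3$).
\end{itemize}

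\textbf{Choice of $u$.} As in Lemma \ref{Lem:PRicciCcobordismlm}, take $u(t)=\varepsilon e^{-2Ct}$ with $\varepsilon=\min\{u_0,\lambda/(2K),1\}$. Since $u\le1$, we get $u'=-2Cu\le-C(u^3+u)$, which is (b). Since $u$ is decreasing, $u\le u(0)\le u_0$ throughout, which is (a). Finally $u(0)\le\lambda/(2K)$ gives (3).

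\textbf{Main obstacle.} The delicate step is the tangential block $\wedge^2T\Sigma_t$. There the lapse derivative $u'$ does not appear, so a sign-reversed ODI alone cannot make it negative. The fix is to use the Gauss-equation term $-\frac{1}{4u^2}\gamma'\wedge\gamma'$, which is uniformly negative because $\gamma'_t>0$ (the convexity of the slices), and to keep $u$ uniformly small so that this term dominates both $\Rm^{\gamma_t}$ and the mixed Codazzi terms.
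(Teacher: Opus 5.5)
Your proposal is correct and follows the paper's proof essentially step by step. It uses the same quasi-spherical ansatz with a decreasing lapse $u=u(t)$ and the same splitting $\alpha=\beta+w\wedge e_n$ into tangential, mixed and normal blocks; the Gauss-equation term $-\frac{1}{4u^2}\gamma'_t\wedge\gamma'_t$ handles the tangential block through smallness of $u$, and an exponentially decaying $u$ handles the normal block. The differences are only cosmetic: you absorb the mixed term into part of the Gauss-equation gain $\frac{\lambda^2}{8u^2}|\beta|^2$ rather than into $\Lambda|\beta|^2$, and you write the ODI as $u'\le -C(u^3+u)$ instead of $u'\le -C_2u$ together with $u\le C_1$, which are equivalent once $u\le 1$.
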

This lemma is the sign-reversed analogue of the positive Ricci curvature collar lemma. However, the curvature-operator setting is more delicate. In the Ricci case, the mixed sectional curvatures of planes containing $\partial_t$ can be made sufficiently positive when $-u'$ is sufficiently large relative to $u+u^3$, so that they dominate the remaining contributions and force $\Ric>0$. For the curvature operator, one must also control the sectional curvatures of planes tangent to the slices $\Sigma_t$, and here we exploit the negative contribution in the Gauss equation when the second fundamental form of each $\Sigma_t$ has a definite sign.

\begin{proof}
We construct the desired metric $g$ in the quasi-spherical form
$$
g=u^2dt^2+\gamma_t
$$
by choosing an appropriate lapse function $u$. There exists a constant $\Lambda>0$ such that, for all $t\in [0,1]$,
\begin{equation}\label{secextensionupbound}
 \gamma'_t\leq \Lambda\gamma_t,\quad \gamma''_t\geq-\Lambda\gamma_t, \quad \mathcal R^{\gamma_t}\leq\Lambda,\quad\big|\nabla^{\gamma_t}\gamma'_t\big|\leq\Lambda.
\end{equation}
Since $\gamma'_t>0$ for all $t\in [0,1]$, there exists $\lambda>0$ such that
\begin{equation}\label{secextensionlowbound}
\gamma'_t\geq\lambda\gamma_t.
\end{equation}
To match the positivity of $\gamma'_t$, we choose $u$ so that $u'<0$. 

By \eqref{secextensionlowbound}, for any bivector $\varphi$ tangent to the slice $\Sigma_t$, we have
\begin{equation}\label{bivectorieq}
\left(\gamma'_t\wedge \gamma'_t\right)(\varphi,\varphi)\geq\lambda^2|\varphi|^2.
\end{equation}
Let $\omega$ be an arbitrary bivector at a point of $\Sigma_t$. Decompose $\omega$ as $$\omega=\psi+\beta\wedge e_n,$$ where
$\psi$ is a bivector and $\beta$ is a vector, both tangent to $\Sigma_t$. Then
\begin{equation}\label{curopeexpansion1}
\Rm^{g}\left(\omega, \omega\right)=\Rm^{g}\left(\psi,\psi\right)+2\Rm^{g}\left(\psi,\beta\wedge e_n\right)+\Rm^g\left(\beta\wedge e_n,\beta\wedge e_n\right).
\end{equation}
Using \eqref{tt}, \eqref{secextensionupbound}, and \eqref{bivectorieq}, we derive
\begin{equation}\label{estimate1}
\begin{split}
\Rm^g\left(\psi,\psi\right)&=\Rm^{\gamma_t}\left(\psi,\psi\right)-\frac{1}{4u^2}\left(\gamma'_t\wedge \gamma'_t\right)(\psi,\psi)\\
&\leq\left(\Lambda-\frac{\lambda^2}{4u^2}\right)|\psi|^2.
\end{split}
\end{equation}
Applying \eqref{mixed} and \eqref{secextensionupbound}, we obtain 
\begin{equation}\label{estimate2}
\Rm^{g}(\psi,\beta\wedge e_n)\leq \frac{\Lambda}{u}|\psi||\beta|,
\end{equation}
By \eqref{tn}, \eqref{secextensionupbound}, and \eqref{secextensionlowbound}, it follows that 
\begin{equation}\label{estimate3}
\Rm^g\left(\beta\wedge e_n,\beta\wedge e_n\right)\leq\left(\frac{\lambda u'}{2u^3}+\frac{\Lambda^2+\Lambda}{2u^2}\right)|\beta|^2.
\end{equation}
Substituting \eqref{estimate1}--\eqref{estimate3} into \eqref{curopeexpansion1} yields
\begin{equation*}
\Rm^{g}\left(\omega, \omega\right)\leq \left(\Lambda-\frac{\lambda^2}{4u^2}\right)|\psi|^2+\frac{2\Lambda}{u}|\psi||\beta|+\left(\frac{\lambda u'}{2u^3}+\frac{\Lambda^2+\Lambda}{2u^2}\right)|\beta|^2.
\end{equation*}
Applying Young’s inequality to the mixed term
$$\frac{2\Lambda}{u}|\psi||\beta|\leq\Lambda|\psi|^2+\frac{\Lambda}{u^2}|\beta|^2,$$
we get
\begin{equation*}
\Rm^{g}\left(\omega, \omega\right)\leq \left(2\Lambda-\frac{\lambda^2}{4u^2}\right)|\psi|^2+\left(\frac{\lambda u'}{2u^3}+\frac{\Lambda^2+3\Lambda}{2u^2}\right)|\beta|^2.
\end{equation*}
Since $|\omega|^2=|\psi|^2+|\beta|^2$, there exist constants $C_1,C_2>0$, depending only on $\lambda$, $\Lambda$, $K$, and $n$, such that if $u$ satisfies the following conditions on $[0,1]$:
$${\rm (i)}\ u\leq C_1,\qquad\  {\rm (ii)}\ u'\leq -C_2u,$$
then $\mathcal R^g\leq-K$ on $\Sigma\times [0,1]$. 

It remains to check the second fundamental form of $\Sigma_0$. As in the proof of Lemma~\ref{Lem:PRicciCcobordismlm}, we have
\[
\secf=-\frac{1}{2u(0)}\gamma'_0.
\]
Since $\gamma'_0\ge\lambda\gamma_0$, it follows that
\[
\secf\le -\frac{\lambda}{2u(0)}\gamma_0.
\]
Thus, property $(3)$ holds once $C_1$ in (i) is chosen sufficiently small. 

Finally, set $u=C_1 e^{-C_2t}$. Then $u$ satisfies (i) and (ii), and the proof is complete.
\end{proof}

\section{Corner smoothing}\label{Sec: 3}

Gluing two Riemannian manifolds with boundary along isometric boundary components is a common and useful operation. The resulting metric is generally only Lipschitz across the gluing hypersurface; this singularity is commonly referred to as a ``corner''. Under suitable assumptions on the two manifolds, the glued manifold may satisfy certain curvature bounds in a generalized sense (for example, in the Alexandrov sense). In some situations, one then needs to further smooth the corner while preserving these curvature bounds.

In this section, we first review Perelman’s corner-smoothing theorem for positive Ricci curvature, and then establish corresponding results under upper curvature bounds for sectional curvature and for the curvature operator. We begin with the following definition of a ``corner”:

\begin{definition}
A triple $(M, g, \Gamma)$ is called a {\it Riemannian manifold with a corner along $\Gamma$} if the following conditions are satisfied:
\begin{itemize}[left=0.5cm]
    \item [(i)] $M$ is a smooth manifold (possibly with boundary), and $\Gamma$ is a smoothly embedded, closed, two-sided hypersurface contained in the interior of $M$.
    \item [(ii)] $g$ is a Lipschitz Riemannian metric on $M$, smooth on $M \setminus \Gamma$, and smooth up to $\Gamma$ from each side.
\end{itemize}
\end{definition}

Since $\Gamma$ is two-sided, it admits a tubular neighborhood
\(
\mathcal{U} \cong (-\varepsilon,\varepsilon)\times\Gamma.
\)
Denote the two components of $\mathcal{U}\setminus\Gamma$ by
$\mathcal{U}_+$ and $\mathcal{U}_-$. Let $\secf^\pm$ denote the second
fundamental forms of $\Gamma$ from $\mathcal{U}_\pm$,
with respect to the unit normals pointing into $\mathcal{U}_\pm$.

If the metric has corners along finitely many pairwise disjoint hypersurfaces, one may choose disjoint tubular neighborhoods and perform the smoothing near each hypersurface independently. It therefore suffices to treat a single corner.

\subsection{Corner smoothing under lower curvature bounds}
For positive Ricci curvature, Perelman proved the following corner-smoothing theorem, which plays a key role in our proof of Theorem \ref{Thm:main 1}.
\begin{theorem}[Perelman \cite{Per1997}]\label{Lem: corner smoothing Ricci}
Let $(M,g,\Gamma)$ be a Riemannian manifold with a corner along $\Gamma$. Assume that
\begin{itemize}[left=0.5cm]
\item  [{\rm(i)}] $(M\setminus\Gamma,g)$ has positive Ricci curvature.
\item  [{\rm(ii)}] The sum $\secf^{+}+\secf^{-}$ is positive definite.
\end{itemize}
Then, for each open neighborhood $U$ of $\Gamma$, there exists a smooth metric $\tilde g$ on $M$ with positive Ricci curvature such that $\tilde g$ coincides with $g$ on $M\setminus U$.
\end{theorem}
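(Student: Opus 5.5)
The plan is to reduce to a one-dimensional smoothing problem in Fermi coordinates around $\Gamma$. Fix a tubular neighborhood $\mathcal U\cong(-\varepsilon,\varepsilon)\times\Gamma$ inside $U$, where $r\mapsto (r,x)$ is the unit-speed $g_\pm$-geodesic normal to $\Gamma$ on each side, with $r>0$ in $\mathcal U_+$. In these coordinates
\[
g=dr^2+\gamma_r,
\]
where $\gamma_r$ is smooth for $r\ge0$ and for $r\le0$, continuous at $r=0$, and $\partial_r\gamma_r$ jumps there. The second fundamental forms are encoded by the one-sided derivatives: $\secf^+=-\tfrac12\partial_r\gamma|_{0^+}$ and $\secf^-=\tfrac12\partial_r\gamma|_{0^-}$. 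Hence hypothesis (ii) says exactly that
\[
\partial_r\gamma|_{0^+}-\partial_r\gamma|_{0^-}=-2(\secf^++\secf^-)<0,
\]
so the jump of $\gamma'$ is negative definite. This is the favourable sign for Ricci: formally $\gamma''$ contains a $\delta$-mass $-2(\secf^++\secf^-)\delta_0$, and $-\tfrac12\gamma''$ enters $\Ric_{nn}$ and $\Ric_{ij}$ through the formulas of Proposition \ref{Lem: curvature relations} (with $u\equiv1$) with a positive sign.

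The smoothing will be a mollification of the slice family in $r$ only. Choose a small $\delta>0$ and a smooth family $\tilde\gamma_r$ that agrees with $\gamma_r$ for $|r|\ge\delta$ and, for $|r|<\delta$, has $\tilde\gamma_r$ and $\tilde\gamma_r'$ uniformly $C^0$-close to $\gamma_r$ and to the convex interpolation of its one-sided derivatives. Also require the $\Gamma$-tangential derivatives up to second order to be uniformly bounded, and
\[
\tilde\gamma''_r=-\frac{2}{\delta}\,\phi(r/\delta)\,(\secf^++\secf^-)+O(1),
\]
with $\phi\ge0$ a bump of integral one. A concrete way is to write $\gamma_r=\gamma_0+r\,h_\pm+r^2 q_\pm(r)$ on each side and to replace $r\,h_\pm$ by $\int_0^r(\text{smooth interpolation of }h_-,h_+)$. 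Then set $\tilde g=dr^2+\tilde\gamma_r$ and use \eqref{Riccinn}, \eqref{Riccitt}, \eqref{Riccimixed} with $u\equiv1$. The terms $\Ric^{\tilde\gamma_r}$, $(\tilde\gamma')^2$, $\tr\tilde\gamma'\cdot\tilde\gamma'$ and $\Ric_{in}$ are all $O(1)$ independently of $\delta$, since they involve at most first $r$-derivatives and tangential derivatives. The $\gamma''$ term, by contrast, contributes
\[
\frac1\delta\,\phi\,(\secf^++\secf^-)\ge \frac{c}{\delta}\phi
\]
to both $\Ric_{nn}$ and the tangential block.

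The main obstacle is the region where $\phi(r/\delta)$ is small but $r$ is still inside the gluing interval, together with the $O(1)$ errors there. On that region the original positivity $\Ric^g>0$ from the two sides must be recovered, but the mollified metric differs from $g$ by $O(\delta)$ in $C^1$ and by $O(1)$ in the $r^2$-terms' second derivatives. To handle this, I would choose the interpolation so that $\tilde\gamma$ agrees with $\gamma$ up to $C^2$ outside $|r|\le\delta$. Inside, I would arrange $\tilde\gamma''$ to be a convex combination $\chi\,\gamma''_++(1-\chi)\gamma''_-$ plus the positive $\delta$-spike. Then the Ricci of $\tilde g$ at $r$ is, up to $o(1)$ as $\delta\to0$ (by continuity of all one-sided quantities at $r=0$), a convex combination of the one-sided Ricci tensors at $r=0$, which are positive by compactness of $\Gamma$ (or locally uniformly if $\Gamma$ is noncompact, with $\delta$ varying), plus a nonnegative term. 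Mixed terms $\Ric_{in}$ are handled by the same convex-combination argument, since they are linear in $\tilde\gamma'$ and its tangential derivatives. Taking $\delta$ small then gives $\Ric^{\tilde g}>0$ and $\tilde g=g$ outside $U$.
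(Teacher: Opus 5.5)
Your sign bookkeeping is correct, and on the core of the gluing interval, where $\phi(r/\delta)$ is bounded below, the spike $\delta^{-1}\phi(r/\delta)(\secf^++\secf^-)$ does dominate every $O(1)$ term. The gap is in your last step. On the rest of $\{|r|<\delta\}$ you assert that $\Ric^{\tilde g}$ is, up to $o(1)$, a convex combination of the one-sided Ricci tensors plus a nonnegative term. That would follow if $\Ric$ were affine in $\tilde\gamma'$, but it is quadratic in it: see the terms $\tfrac14\tr(\tilde\gamma')^2$, $\tfrac12(\tilde\gamma')^2$ and $-\tfrac14\tr\tilde\gamma'\,\tilde\gamma'$ in \eqref{Riccinn} and \eqref{Riccitt}. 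For $|r|<\delta$ your $\tilde\gamma'$ is close to $\theta\gamma'(0^+)+(1-\theta)\gamma'(0^-)$ with $\theta=\int_{-1}^{r/\delta}\phi$. So it interpolates between two values that differ by the fixed tensor $-2(\secf^++\secf^-)$. In particular, $\tilde g$ is \emph{not} $O(\delta)$-close to $g$ in $C^1$ there, contrary to what you state. Expanding the square gives, for instance,
\[
\Ric^{\tilde g}_{nn}=\theta\,\Ric^{g_+}_{nn}+(1-\theta)\,\Ric^{g_-}_{nn}-\theta(1-\theta)\bigl|\secf^{+}+\secf^{-}\bigr|^2+\frac{\phi(r/\delta)}{\delta}\,\tr\bigl(\secf^{+}+\secf^{-}\bigr)+O(\delta),
\]
and an analogous indefinite defect of size $\theta(1-\theta)$ appears in the tangential block. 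This defect is $O(1)$, not $o(1)$, and it has the wrong sign in $\Ric_{nn}$. Nothing bounds $|\secf^++\secf^-|^2$ by the one-sided Ricci lower bound; think of a sharp corner between two pieces whose Ricci curvature is only barely positive. So wherever $\phi$ is small but $\theta(1-\theta)$ is not, your $\tilde g$ can have negative Ricci curvature. An interior zero of a general bump is such a place.

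What is missing is a quantitative coupling between spike and interpolation, namely $\theta(1-\theta)\le C\phi$ on $(-1,1)$. This holds if $\phi$ is even and nondecreasing on $[-1,0]$. Then $\theta(s)\le(1+s)\phi(s)\le\phi(s)$ for $s\le 0$, and $1-\theta(s)\le(1-s)\phi(s)\le\phi(s)$ for $s\ge0$. Hence $\theta(1-\theta)\le\phi$, and the defect is absorbed by $c\,\delta^{-1}\phi$ once $\delta$ is small. With that choice, plain mollification of $\gamma_r$ in $r$ works in a single step, cut off away from $\Gamma$ as in Step~2 of the proof of Theorem~\ref{thm: smoothing sec}.

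The paper does not reprove this statement; it cites \cite{Per1997,BWW}. The standard argument there, whose first step the paper also uses for Theorem~\ref{thm: smoothing sec}, avoids the issue. A cubic Hermite interpolation on $[-\delta,\delta]$ has second derivative $-\delta^{-1}(\secf^++\secf^-)+O(1)$ uniformly on the whole interval, so there are no tails, and it yields a $C^1$ metric. Only at the next stage, smoothing that $C^1$ metric, is a convex-combination argument like yours valid. This is because $\tilde\gamma'$ then stays $O(\delta)$-close to a single continuous $\gamma'$, and $\Ric$ is affine in $\gamma''$.

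Separately, your ``concrete'' construction is not smooth as written. First, $\int_0^r$ of the interpolation misses $rh_\pm$ by an $O(\delta)$ constant for $|r|\ge\delta$. Second, keeping $r^2q_\pm(r)$ leaves jumps of $\tilde\gamma''$ (and of all higher derivatives) at $r=0$.
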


Perelman provided a brief sketch, and several detailed proofs have since appeared; see, for example, \cite{BWW}. For any $K\in\mathbb R$, the smoothing can likewise be carried out while preserving $\Ric>K$; see \cite[Corollary B]{RW232}. Moreover, analogous corner-smoothing results preserving lower bounds for other types of curvature have been established. For sectional curvature and curvature operator, see \cite{Kos1} and \cite{Chow,Sch12}, respectively. In the scalar-curvature setting, the condition that $\secf^{+}+\secf^{-}$ be positive can be weakened to require that $\tr_{\Gamma}(\secf^{+}+\secf^{-})$ (i.e., the sum of the mean curvatures) be positive; see \cite{BH,BMN,Miao2002,Sch12}. For $k^{\rm th}$-intermediate Ricci curvature and intermediate curvatures between Ricci and scalar curvature, see \cite{RW232}.

\subsection{Corner smoothing under upper curvature bounds}

In contrast to the case of lower curvature bounds, corner smoothing under upper curvature bounds is more subtle: merely reversing the sign condition on the sum of the second fundamental forms is insufficient to preserve the upper curvature bound. In fact, Kosovskiĭ showed that even preserving the upper curvature bound in the Alexandrov sense (without smoothing the glued metric) requires additional conditions. Here, having curvature bounded above by \(K\) in the Alexandrov sense means that, with respect to the induced length metric, sufficiently small geodesic triangles are no thicker than comparison triangles with the same side lengths in the model surface of constant curvature \(K\). This is also known as the locally \(\mathrm{CAT}(K)\) condition, whereas the \(\mathrm{CAT}(K)\) condition is its global counterpart.

\begin{theorem}[Kosovskiĭ \cite{Kos2}]\label{Kos2}
Let \((M,g,\Gamma)\) be a Riemannian manifold with a corner along \(\Gamma\), and assume that \(M\) has no boundary. Fix \(K\in\mathbb R\). Then \((M,g)\) is locally \(\mathrm{CAT}(K)\) if and only if the following conditions hold:
\begin{itemize}[left=0.5cm]
\item  [{\rm(i)}] $(M\setminus\Gamma,g)$ has sectional curvature no greater than $K$.
\item  [{\rm(ii)}] The sum $\secf^{+}+\secf^{-}$ is negative semidefinite.
\item [{\rm(iii)}] For every $p\in \Gamma$ and every two-plane $\Pi\subset T_p\Gamma$ such that
$\secf^{+}|_{\Pi}$ and $\secf^{-}|_{\Pi}$ are both negative definite, it holds that
$\sec^{\,g|_{\Gamma}}(\Pi)\le K$.
\end{itemize}
\end{theorem}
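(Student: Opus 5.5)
Theorem~\ref{Kos2} is quoted from Kosovskiĭ and is not proved in this paper; we would only indicate how its two directions are established. The natural route is to work in Fermi coordinates $(-\varepsilon,\varepsilon)\times\Gamma$ with $g=dt^2+\gamma_t$, where $\gamma_t$ is smooth for $t\ge0$ and for $t\le0$ but is only Lipschitz at $t=0$. In these coordinates $\gamma'_{0^\pm}$ are determined by $\secf^\pm$ through \eqref{secf} with $u\equiv1$. Up to sign conventions, the jump of $\gamma'$ across $t=0$ is $-2(\secf^++\secf^-)$.

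\emph{Necessity.} Condition (i) is immediate: an open subset of a locally $\mathrm{CAT}(K)$ space is locally $\mathrm{CAT}(K)$, and for smooth metrics this is equivalent to $\sec\le K$. For (ii), suppose $\secf^++\secf^-$ has a positive direction $v\in T_p\Gamma$. Take short geodesics of $\Gamma$ tangent to $v$. The corner then bends inward in that direction, which makes geodesics near $p$ crossing $\Gamma$ non-unique, or equivalently lets one produce a geodesic bigon. That contradicts uniqueness of geodesics in a $\mathrm{CAT}(K)$ ball. For (iii), suppose $\secf^\pm|_\Pi$ are both negative definite. Then small triangles in $\Gamma$ spanned by $\Pi$ are locally geodesic in $M$, because both sides are concave along $\Pi$ and so ambient geodesics are trapped on $\Gamma$. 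The CAT$(K)$ comparison for these triangles then forces $\sec^{g|_\Gamma}(\Pi)\le K$.

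\emph{Sufficiency.} Our plan would be to approximate $g$ by smooth metrics $g_\delta=dt^2+\gamma^\delta_t$, with $\gamma^\delta$ a mollification in $t$, and check that their curvatures satisfy $\sec\le K+o(1)$ on the parts that matter. Curvature upper bounds pass to Gromov--Hausdorff limits of $\mathrm{CAT}(K+o(1))$ balls of uniform size, so the limit $(M,g)$ would be locally $\mathrm{CAT}(K)$. In the mollified region:
\begin{itemize}[left=0.5cm]
\item by \eqref{tn}, $R_{nijn}$ picks up the large term $-\tfrac12(\gamma^\delta)''$, which has a sign given by (ii);
\item the tangential curvature \eqref{tt} stays bounded;
\item the mixed terms \eqref{mixed} are bounded.
\end{itemize}

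\emph{Main obstacle.} A plane spanned by $e_n$ and a tangent vector receives the big favorable term. A plane tangent to $\Gamma$ does not, and its curvature by \eqref{tt} interpolates between the Gauss-equation values computed from $\secf^+$ and $\secf^-$. That is exactly where (iii) is needed. Condition (iii) alone only controls planes on which both forms are negative definite. For the remaining planes, one would have to show that the $\gamma''$ term dominates after mixing with $e_n$, using that a tilted plane picks up the huge negative normal curvature. This mixing argument would be the delicate core of the proof.
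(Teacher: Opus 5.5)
\textbf{There is a genuine gap, in the sufficiency direction.} Under (i)--(iii) alone, the bound $\sec^{g_\delta}\le K+o(1)$ for your mollified metrics is false in general. The planes where it fails are precisely the planes \emph{tangent} to $\Gamma$, so your proposed mixing with $e_n$ cannot help: these planes contain no $e_n$.

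For $\Pi\subset T_p\Gamma$, \eqref{tt} with $u\equiv1$ gives
\[
\sec^{g_\delta}(\Pi)=\sec^{\gamma^\delta_t}(\Pi)-\det\nolimits_\Pi\bigl(\tfrac12(\gamma^\delta_t)'\bigr),
\]
which contains no $\gamma''$ at all. Mollifying in $t$ keeps $\gamma^\delta_t=\gamma(0)+O(\delta)$. Meanwhile $\tfrac12(\gamma^\delta_t)'$ runs, up to $O(\delta)$, through the whole segment $A_s=\secf^-+sB$, $s\in[0,1]$, where $B=-(\secf^++\secf^-)\ge0$. Since $B\ge0$, the map $s\mapsto\det_\Pi A_s$ is a convex quadratic. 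So the tangential curvature is concave in $s$: it does not interpolate between the one-sided values but can overshoot both.

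Concretely, take $n=3$ and, in a $\gamma(0)$-orthonormal basis of $T_p\Gamma$, set $\secf^-=\diag(-\tfrac32,\tfrac12)$ and $\secf^+=\diag(\tfrac12,-\tfrac32)$.
\begin{itemize}[left=0.5cm]
\item Then $\secf^++\secf^-=-\gamma(0)$, so (ii) holds.
\item Neither form is negative definite on $T_p\Gamma$ (nor at nearby points), so (iii) is vacuous.
\item We have $\det A_0=\det A_1=-\tfrac34$, while $\det A_{1/2}=\det\diag(-1,1)=-1$.
\end{itemize}
Hence, if the one-sided curvatures of the tangent plane to $\Gamma$ are at least $K-\tfrac18$ (compatible with (i)), every $g_\delta$ has $\sec\ge K+\tfrac18-O(\delta)$ on that plane at some slice. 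The approximants are therefore not locally $\mathrm{CAT}(K+o(1))$. A bound only ``on the parts that matter'' would not make them so either, so the Gromov--Hausdorff limit argument yields nothing sharp.

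This is the phenomenon the paper flags when it says that reversing the sign condition is insufficient. It is also why its own Theorem~\ref{thm: smoothing sec} assumes $2$-convexity, which is stronger than (iii): that hypothesis puts the minimum of $s\mapsto\det A_s$ at the endpoint $s=0$ (this is the inequality $F'(-\delta)\ge0$ in its proof).

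\textbf{The missing idea}, as the paper summarizes Kosovski\u{\i}'s proof, is to use smoothing only for a \emph{preliminary, non-sharp} bound. The tangential and mixed curvatures of $g_\delta$ stay uniformly bounded. By (ii), the large $\gamma''$ term only makes the $e_n$-planes more negative. So $\sec^{g_\delta}\le K'$ uniformly for some $K'\ge K$, and the glued space is locally $\mathrm{CAT}(K')$. This gives local uniqueness and stability of geodesics.

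The sharp constant $K$ then comes from a different mechanism, following \cite{ABB}: $K$-convexity of normal Jacobi fields along geodesics of the \emph{glued} metric itself. In outline, the three conditions enter as follows.
\begin{itemize}[left=0.5cm]
\item On smooth arcs, the convexity follows from (i).
\item At a transversal crossing of $\Gamma$, only the jump of $J'$ matters, and (ii) gives it the right sign.
\item Arcs running inside $\Gamma$ essentially occur only in directions in which both sides are concave. There the relevant curvature is the intrinsic one, controlled by (iii).
\end{itemize}
Geodesics of the glued space never traverse the intermediate configurations $A_s$ that mollification manufactures. That is why the sharp bound holds for the glued space but not for your $g_\delta$.

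Separately, your necessity argument for (ii) is only heuristic. The mechanism that actually produces non-unique geodesics in arbitrarily small balls is a crossing of $\Gamma$ at a small angle $\theta$ in a direction $v$ with $(\secf^++\secf^-)(v,v)>0$. There the jump of $J'$ has size of order $(\secf^++\secf^-)(v,v)/\sin\theta$, which produces conjugate points arbitrarily close to $p$.
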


In Kosovski\u{\i}'s proof, smoothing provides a preliminary upper curvature bound, while the precise Alexandrov upper curvature bound $K$ is obtained by establishing the $K$-convexity of normal Jacobi fields, following \cite{ABB}. More generally, there is a distinction between synthetic and smooth nonpositive curvature: Davis, Januszkiewicz, and Lafont \cite{DJL} constructed closed smooth $4$-manifolds admitting locally $\mathrm{CAT}(0)$ metrics but no smooth Riemannian metrics of nonpositive sectional curvature. For the smoothing procedure, we impose a $2$-convexity condition that is stronger than condition (iii) and suffices for our application to Theorem~\ref{Thm:main 2}. This smoothing procedure also applies to the curvature operator.

\begin{theorem}\label{thm: smoothing sec}
Let $(M,g,\Gamma)$ be a Riemannian manifold with a corner along $\Gamma$. Assume:
\begin{itemize}[left=0.5cm]
\item [{\rm(i)}] $(M\setminus\Gamma,g)$ has sectional curvature (respectively, curvature operator) bounded above by $K$ for some constant $K$.
\item [\rm(ii)] The sum $\secf^{+}+\secf^{-}$ is negative definite.
\item [\rm(iii)] Up to interchanging the two sides, $\secf^-$ is $2$-convex with respect to $-(\secf^++\secf^-)$. That is, for every $p\in\Gamma$ and every orthonormal pair $\{e_1,e_2\}\subset T_p\Gamma$ with respect to $-(\secf^++\secf^-)$, one has $\secf^-(e_1,e_1)+\secf^-(e_2,e_2)\geq 0$.
\end{itemize}
Then, for any open neighborhood $U$ of $\Gamma$ and any $\varepsilon>0$, there exists a smooth metric $\tilde g$ on $M$ whose sectional curvature (respectively, curvature operator) is bounded above by $K+\varepsilon$, with $\tilde g=g$ on $M\setminus U$ and $\|\tilde g-g\|_{C^0}<\varepsilon$.
\end{theorem}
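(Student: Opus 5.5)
The plan is to work in Fermi coordinates around $\Gamma$ and smooth the metric only in the normal direction, using the quasi-spherical curvature formulas of Proposition~\ref{Lem: curvature relations} with $u\equiv 1$.

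\textbf{Setup.} Take the exponential map of the Lipschitz metric along the unit normal $\partial_t$ pointing into $\mathcal U_+$. This writes $g=dt^2+\gamma_t$ on $\Gamma\times(-\delta,\delta)$, where $t\mapsto\gamma_t$ is smooth on $[0,\delta)$ and on $(-\delta,0]$, continuous at $t=0$, and has a jump in $\gamma'_t$ there. By \eqref{secf}, $\secf^+=-\tfrac12\gamma'_{0^+}$, and since the normal into $\mathcal U_-$ is $-\partial_t$, $\secf^-=\tfrac12\gamma'_{0^-}$. Hence
\[
J:=\gamma'_{0^+}-\gamma'_{0^-}=-2(\secf^++\secf^-)>0
\]
by (ii). Let $A:=\gamma'_{0^-}=2\secf^-$. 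Then (iii) says exactly that for every $J$-orthonormal pair $e_1,e_2$ one has $A(e_1,e_1)+A(e_2,e_2)\ge0$.

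\textbf{Smoothing.} Extend the $(-)$ family smoothly past $0$, obtaining $\gamma^-_t$ on $(-\delta,\delta)$, and write $\gamma_t=\gamma^-_t+D_t$ with $D_t=0$ for $t\le0$. For $t\ge0$, $D_t$ is smooth with $D_0=0$ and $D'_{0}=J$. Fix a smooth nondecreasing $\eta$ with $\eta=0$ on $(-\infty,0]$ and $\eta=1$ on $[1,\infty)$, and set $\eta_\sigma(t)=\eta(t/\sigma)$. Define
\[
\tilde\gamma_t=\gamma^-_t+\int_0^t \eta_\sigma(s)\,D'_s\,ds .
\]
This agrees with $\gamma_t$ for $t\le0$. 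For $t\ge\sigma$ it differs from $\gamma_t$ only by a constant of size $O(\sigma^2)$, and this constant is then removed by a further cutoff on $[\sigma,\sqrt\sigma]$ whose contribution to the curvature is $O(\sqrt\sigma)$ in $C^2$. Set $\tilde g=dt^2+\tilde\gamma_t$. Then $\tilde g=g$ outside $\{|t|<\sqrt\sigma\}\subset U$ and $\|\tilde g-g\|_{C^0}=O(\sigma^2)$.

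On the transition layer $0<t<\sigma$ we have $\tilde\gamma_t=\gamma_0+O(\sigma)$ and $\tilde\gamma'_t=A+\eta_\sigma(t)J+O(\sigma)$, with $\nabla^{\tilde\gamma_t}\tilde\gamma'_t$ bounded independently of $\sigma$. The key feature is that $\tilde\gamma''_t=\sigma^{-1}\eta'(t/\sigma)J+O(1)$, which is large and positive where $\eta'$ is bounded below.

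\textbf{Curvature estimate.} Decompose a bivector as $\omega=\psi+\beta\wedge e_n$, as in the proof of Lemma~\ref{Lem: NSecCcobordismlm}, and estimate the three blocks given by Proposition~\ref{Lem: curvature relations}.

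\emph{Normal block.} By \eqref{tn}, this block is $\tfrac14(\tilde\gamma')^2-\tfrac12\tilde\gamma''$. It is $\le O(1)-\tfrac{c}{\sigma}\eta'(t/\sigma)$, so it is very negative on most of the layer.

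\emph{Mixed block.} By \eqref{mixed}, this block is $O(1)$.

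\emph{Tangential block.} By \eqref{tt}, this block equals $\Rm^{\gamma_0}-\tfrac14 A_s\wedge A_s+O(\sigma)$, where $A_s=A+sJ$ and $s=\eta_\sigma(t)\in[0,1]$. This is the main point. Expanding,
\[
A_s\wedge A_s=A\wedge A+2s\,A\wedge J+s^2\,J\wedge J .
\]
Diagonalize $A$ simultaneously with $J$, so that $J=\mathrm{id}$ and $A=\mathrm{diag}(a_i)$. Then $A\wedge J$ is diagonal on $e_i\wedge e_j$ with eigenvalues $\tfrac12(a_i+a_j)\ge0$ by (iii), and $J\wedge J>0$. Hence $A_s\wedge A_s\ge A\wedge A$ as forms on $\wedge^2T\Gamma$, and this gives
\[
\Rm^{\tilde g}(\psi,\psi)\le \Rm^{g^-}(\psi,\psi)+O(\sigma)|\psi|^2\le (K+O(\sigma))|\psi|^2 ,
\]
using the $(-)$-side bound (i) at $t=0^-$. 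For the sectional case only decomposable $\psi$ are needed, where the same monotonicity reduces to $\det(A|_\Pi+sJ|_\Pi)$ being nondecreasing in $s$, which follows from $\tr_{J}A|_\Pi\ge0$.

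\textbf{Combining the blocks.} Applying Young's inequality to the mixed term gives
\[
2C|\psi||\beta|\le \varepsilon|\psi|^2+C^2\varepsilon^{-1}|\beta|^2 ,
\]
and $C^2\varepsilon^{-1}|\beta|^2$ is absorbed by the $-c/\sigma$ normal term once $\sigma\ll\varepsilon$. The same absorption handles the $+K$ needed on $\beta\wedge e_n$ planes.

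\textbf{Main obstacle.} The main difficulty is that the normal block is only $O(1)$, not very negative, near the edges of the layer where $\eta'$ is small. I would handle this by choosing $\eta$ so that $\eta'\ge c_0>0$ on $(0,1)$, and by compensating the region near $t=0$ and $t=\sigma$. There, $s$ is close to $0$ or $1$, so $\tilde\gamma'_t$ is close to $\gamma'_{0^\mp}$ and the metric is $C^2$-close to $g$ on one side, where (i) already gives all blocks up to $O(\sigma)$. To make this quantitative, I would replace the cutoff by a convolution of $\gamma'_t$ with a mollifier of width $\sigma$. This writes $\tilde\gamma''_t$ as a convex combination of the one-sided values $\gamma''_{t\mp}$ plus a positive multiple of $J$, so that every block is a convex combination of one-sided curvature bounds plus the favourable terms identified above.
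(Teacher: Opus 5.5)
\paragraph{Verdict.} There is a genuine gap, and it lies in the smoothing step, which is exactly where you place the main obstacle.

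Your setup and algebraic core coincide with the paper's. You use Fermi coordinates, set $J=-2(\secf^++\secf^-)>0$ and $A=2\secf^-$, derive monotonicity of $A_s\wedge A_s$ in $s$ from $2$-convexity, and absorb the $O(1)$ mixed term into a normal block of size $-c/\sigma$. The proposal does not close the smoothing step, for two concrete reasons.

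\emph{The offset and its cutoff.} For $t\ge\sigma$ the offset is
\[
\gamma_t-\tilde\gamma_t=\int_0^\sigma(1-\eta_\sigma)D'_s\,ds=\sigma\Bigl(\int_0^1(1-\eta)\Bigr)J+O(\sigma^2).
\]
This is of order $\sigma$, not $\sigma^2$. Cutting it off on $[\sigma,\sqrt\sigma]$ therefore adds $-\chi''c_\sigma$ to $\tilde\gamma''$, with $|\chi''|\sim\sigma^{-1}$. That is a sign-changing term of size $O(1)$. It produces a positive $O(1)$ error in the normal block, on a region where no large negative term is available, and this can push the curvature above $K+\varepsilon$. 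The cutoff must instead be placed on an interval whose length is independent of $\sigma$.

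\emph{The edges of the layer.} A smooth $\eta$ that is constant outside $[0,1]$ cannot satisfy $\eta'\ge c_0>0$ on $(0,1)$. A Lipschitz $\eta$, e.g.\ a linear one, gives only a $C^1$ metric, and you are back to needing a second smoothing. The mollifier variant also does not make every block a convex combination of one-sided bounds. Writing $A_\theta=(1-\theta)A+\theta(A+J)$, one has
\[
-\tfrac14A_\theta\wedge A_\theta=-\tfrac14\bigl[(1-\theta)A\wedge A+\theta(A+J)\wedge(A+J)\bigr]+\tfrac14\theta(1-\theta)J\wedge J.
\]
So the tangential block exceeds the convex combination by a positive definite defect. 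It is controlled only by comparing with the $(-)$ side via monotonicity. A tangential bound referenced to $g_-$, combined with mixed and normal blocks close to those of $g_+$, does not bound the full form $\Rm(\omega,\omega)$. The reason is that hypothesis (i) on each side concerns the full form, cross terms included. Finally, near the edges the deviation from a one-sided metric is $O(\theta)$ or $O(1-\theta)$, not $O(\sigma)$ as you assert.

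\paragraph{How the paper avoids this.} The paper removes the edge region altogether, in two steps.
\begin{enumerate}
\item[(a)] On $[-\delta,\delta]$ it replaces $\gamma$ by the cubic Hermite interpolant. This matches values and first derivatives at $\pm\delta$, so no offset arises. It also satisfies $\gamma_\delta''=J/(2\delta)+O(1)$ uniformly on the whole interval. Hence the normal block is $\le-\lambda/(8\delta)$ everywhere, and your monotonicity-plus-absorption argument applies throughout.
\item[(b)] The price is that the metric is only $C^1$ at $t=\pm\delta$, so the second step mollifies these corners. Now $\gamma$ and $\gamma'$ are continuous, and $\gamma''$ enters only the normal block, linearly. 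Therefore $\Rm^{g_\delta}=\theta_t\Rm^{g_+}+(1-\theta_t)\Rm^{g_-}+O(\delta)$ as full quadratic forms, with no defect.
\end{enumerate}

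\paragraph{Alternative repair.} Your one-step mollification of the Lipschitz corner can also be completed, by an explicit three-region argument. Write $\rho_\sigma$ for the mollifier. The jump of $\gamma'$ contributes $+\rho_\sigma(t)J$ to $\tilde\gamma''$, hence a favourable $-\tfrac12\rho_\sigma(t)J$ to the normal block. Fix $\epsilon_1$ depending on $\varepsilon$.
\begin{enumerate}
\item[(a)] Where $\theta_t\le\epsilon_1$, compare the entire form with $g_-$; the error is $O(\epsilon_1+\sigma)$.
\item[(b)] Where $\theta_t\ge1-\epsilon_1$, compare it with $g_+$; the error is again $O(\epsilon_1+\sigma)$.
\item[(c)] On the remaining set, a bump positive on $(-1,1)$ is bounded below by some $c(\epsilon_1)>0$. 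So the normal block is $\le-c(\epsilon_1)/\sigma$ and absorption works.
\end{enumerate}
Only after this choice of $\epsilon_1$ should you let $\sigma\to0$.
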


In the proof of Theorem \ref{Thm:main 2}, we will verify assumption (iii) by checking the following easier-to-apply sufficient condition.

\begin{proposition}\label{practical_use}
Assume condition (ii) of Theorem \ref{thm: smoothing sec}. Suppose that, after possibly interchanging the two sides, $\secf^+<0$ and $\secf^-$ is $2$-convex with respect to $-\secf^+$. Then condition (iii) of Theorem \ref{thm: smoothing sec} holds.
\end{proposition}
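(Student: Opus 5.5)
The plan is to reduce the statement to linear algebra at a single point $p\in\Gamma$ and to compare generalized eigenvalues. Set $A:=-\secf^+$ and $B:=\secf^-$, viewed as symmetric bilinear forms on $V=T_p\Gamma$. By hypothesis $A>0$, and condition (ii) says $Q:=A-B=-(\secf^++\secf^-)>0$. The hypothesis is that $B$ is $2$-convex with respect to $A$. The goal is to show that $B$ is $2$-convex with respect to $Q$. If $\dim V=1$ both conditions are vacuous, so assume $\dim V\ge 2$.

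\emph{Step 1: Ky Fan reformulation.} For a positive definite form $P$, let $\mu_1\le\mu_2\le\cdots$ be the eigenvalues of $B$ relative to $P$, that is, the solutions of $Bv=\mu Pv$ with $P$-orthonormal eigenvectors. By the Ky Fan minimum principle (applied after diagonalizing $P$ to the identity),
\[
\min\{B(e_1,e_1)+B(e_2,e_2): \{e_1,e_2\}\ P\text{-orthonormal}\}=\mu_1+\mu_2 .
\]
So $2$-convexity of $B$ with respect to $P$ is equivalent to the sum of the two smallest relative eigenvalues being nonnegative.

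\emph{Step 2: compare the eigenvalues relative to $A$ and to $Q$.} Suppose $Bv=\mu Av$. Then $Qv=(1-\mu)Av$. Pairing with $v$ and using $Q>0$, $A>0$ gives $\mu<1$. Hence $Bv=f(\mu)\,Qv$ with $f(\mu)=\mu/(1-\mu)$. Since $A$ and $B$ are simultaneously diagonalizable, the $A$-eigenvectors of $B$ form a basis, so the eigenvalues of $B$ relative to $Q$ are exactly $f(\mu_i)$. Because $f$ is strictly increasing on $(-\infty,1)$, the ordering is preserved, and the two smallest $Q$-eigenvalues are $f(\mu_1)$ and $f(\mu_2)$.

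\emph{Step 3: the elementary inequality.} From Step 1 the hypothesis gives $\mu_1+\mu_2\ge0$, and we need $f(\mu_1)+f(\mu_2)\ge0$. If $\mu_1\ge0$ this is immediate. Otherwise set $a=-\mu_1>0$. Then $a\le\mu_2<1$, and by monotonicity
\[
f(\mu_2)\ge f(a)=\frac{a}{1-a}\ge \frac{a}{1+a}=-f(\mu_1).
\]
This gives condition (iii) of Theorem~\ref{thm: smoothing sec} pointwise on $\Gamma$.

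The only point that needs care is Step 2: one must check that the generalized eigenvectors coincide and that $\mu<1$. Both follow from the positivity of $Q$ and $A$, so I expect no real obstacle.
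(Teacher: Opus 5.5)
Your proof is correct and follows the paper's argument. Both rest on the fact that replacing the reference form $-\secf^+$ by $-(\secf^++\secf^-)$ sends relative eigenvalues $\mu\mapsto\mu/(1-\mu)$, with $\mu<1$ forced by condition (ii), and both finish with an elementary inequality. The differences are cosmetic: the paper works with the restricted pencil on each $2$-plane $P\subset T_p\Gamma$, so it needs neither Ky Fan nor the monotonicity of $f$ to track the two smallest eigenvalues, and it closes with $\mu_1+\mu_2-2\mu_1\mu_2\ge\frac12(\mu_1+\mu_2)^2-2\mu_1\mu_2=\frac12(\mu_1-\mu_2)^2\ge 0$ instead of your case split.
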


The proofs of Theorem \ref{thm: smoothing sec} in the sectional-curvature case and in the curvature-operator case follow the same overall strategy, but the latter is slightly more involved. We therefore present the proof in the curvature-operator case and then discuss the modifications needed for the sectional-curvature case. Throughout the proofs, we use the notation and conventions introduced in Subsection \ref{subsubsec: notation and conventions}.

\begin{proof}[Proof of Theorem \ref{thm: smoothing sec} (curvature-operator case)]
Choose $\delta_0>0$ sufficiently small so that the normal exponential map identifies $\Gamma\times[-\delta_0,\delta_0]$ with a tubular neighborhood $\mathcal U$ of $\Gamma$ contained in $U$. Let $t$ be the signed distance coordinate, with the sign
chosen so that $\mathcal{U}_-$ corresponds to $\{t<0\}$. In these coordinates, we can write $g$ as
\[
g = dt^2 + \gamma(t),
\]
where $\gamma(t)$ is a family of smooth metrics on $\Gamma$, smooth in $t$ on $[-\delta_0,0]$ and on $[0,\delta_0]$, and Lipschitz across $t=0$. Under our sign convention for the second
fundamental form, the one-sided derivatives at $t=0$ satisfy
\[
\gamma'(0^-) = 2\secf^- \quad\text{and}\quad \gamma'(0^+) = -2\secf^+.
\]
The proof proceeds in two steps: a $C^1$-interpolation and a standard smoothing. 

{\it Step 1. $C^1$-interpolation.} We adopt Perelman’s polynomial interpolation scheme \cite{Per1997}. The idea is to replace $g$ on a small neighborhood of $\Gamma$ by a cubic interpolation, thereby obtaining a globally $C^1$ metric. We then show that this interpolation almost preserves the required curvature-operator upper bound. 

For $0<\delta<\delta_0$, let $\gamma_\delta(t)$ be the cubic Hermite interpolant on $[-\delta,\delta]$ determined by the endpoint data $\gamma(\pm\delta)$ and $\gamma'(\pm\delta)$, namely, 
\[
\gamma_\delta(\pm\delta)=\gamma(\pm\delta),\qquad \gamma'_\delta(\pm\delta)=\gamma'(\pm\delta).
\]
An explicit formula is
 \[
\begin{split}
\gamma_{\delta}(t)=\,&\frac{t+\delta}{2\delta}\gamma(\delta)-\frac{t-\delta}{2\delta}\gamma(-\delta)+\frac{(t-\delta)^2(t+\delta)}{4\delta^2}\left[\gamma'(-\delta)-\frac{1}{2\delta}\left(\gamma(\delta)-\gamma(-\delta)\right)\right]\\
&+\frac{(t+\delta)^2(t-\delta)}{4\delta^2}\left[\gamma'(\delta)-\frac{1}{2\delta}\left(\gamma(\delta)-\gamma(-\delta)\right)\right].
\end{split}
\]
On $\Gamma\times[-\delta,\delta]$, define \[g_\delta := dt^2+\gamma_\delta(t),\] and set $g_\delta:=g$ on $M\setminus(\Gamma\times[-\delta,\delta])$. Write $\Gamma_t:=\Gamma\times\{t\}$.
Then $g_\delta$ is $C^{1}$ across the two hypersurfaces $\Gamma_{\pm\delta}$ and smooth elsewhere.

To estimate the curvature of $g_\delta$, we first control $\gamma_\delta(t)$, $\gamma'_\delta(t)$, and $\gamma''_\delta(t)$ on $[-\delta,\delta]$. Since the family $\gamma(t)$ is smooth on $[-\delta,0]$ and on $[0,\delta]$, we have
\begin{equation}\label{continuity}
\|\gamma(\pm\delta)-\gamma(0)\|_{C^2(\Gamma)}=O(\delta), \qquad\|\gamma'(\pm\delta)-\gamma'(0^\pm)\|_{C^2(\Gamma)}=O(\delta),
\end{equation}
and
$$
\left\|\frac{1}{\delta}\left(\gamma(\delta)-\gamma(-\delta)\right)-\left(\gamma'(0^+)+\gamma'(0^-)\right)\right\|_{C^2(\Gamma)}=O(\delta).
$$
Here and throughout, the $C^k$-norms on $\Gamma$ are taken with respect to $\gamma(0)$ (we suppress this in the notation). We also write $O(\delta)$ for a quantity bounded by $C\delta$, and $O(1)$ for a quantity bounded by $C$, where $C$ is independent of $\delta$ and may change from line to line.

It follows that
\begin{equation}\label{Eq: C0}
\sup_{t\in[-\delta,\delta]}\|\gamma_{\delta}(t)-\gamma(0)\|_{C^2(\Gamma)}=O(\delta),
\end{equation}
\begin{equation}\label{Eq: C1}
\sup_{t\in[-\delta,\delta]}\left\|\gamma'_{\delta}(t)-\left(-\frac{\delta+t}{\delta}\secf^{+}+\frac{\delta-t}{\delta}\secf^{-}\right)\right\|_{C^2(\Gamma)}=O(\delta),
\end{equation}
and
\begin{equation}\label{Eq: C2}
\sup_{t\in[-\delta,\delta]}\left\|\gamma''_{\delta}(t)+\frac{1}{\delta}\left(\secf^{+}+\secf^{-}\right)\right\|_{C^2(\Gamma)}=O(1).
\end{equation}
For the derivation of \eqref{Eq: C0}--\eqref{Eq: C2}, see \cite[Lemmas 3--4]{BWW}, \cite[Appendix A.2.1.1]{Burdick},  or \cite[Lemma 2.1]{RW232}. With these preparations, we turn to the curvature estimates of $g_\delta$ on $\Gamma\times[-\delta,\delta]$.

{\it Substep 1.1. Tangential-normal curvatures.} Applying \eqref{tn} to $g_\delta$ with $u\equiv 1$, we obtain
\begin{equation}\label{C1tnrelation}
R^{g_\delta}_{nijn}=-\frac{1}{2}\left(\gamma''_\delta(t)\right)_{ij}+\frac{1}{4}\left(\gamma'_\delta(t)\right)^2_{ij}.
\end{equation}
By \eqref{Eq: C0} and \eqref{Eq: C1}, there exists a positive constant $\Lambda_1$ independent of $\delta$ such that, for all $t\in[-\delta,\delta]$,
\begin{equation}\label{quadraricuobound}
\left(\gamma'_\delta(t)\right)^2\leq\Lambda_1\gamma_\delta(t).
\end{equation}
Since $\secf^{+}+\secf^{-}$ is negative definite, there exists $\lambda>0$ such that
\begin{equation}\label{sumofsec}
-(\secf^{+}+\secf^{-})\geq\lambda\gamma(0).
\end{equation}
By \eqref{Eq: C0}, \eqref{Eq: C2}, and \eqref{sumofsec}, for sufficiently small $\delta$ and all $t\in[-\delta,\delta]$, we have
\begin{equation}\label{secondderlowbound}
\gamma''_\delta(t)\geq\frac{\lambda}{2\delta}\gamma_\delta(t).
\end{equation}
Substituting \eqref{quadraricuobound} and \eqref{secondderlowbound} into \eqref{C1tnrelation}, we conclude that $R^{g_\delta}(e_n,\cdot,\cdot,e_n)$, viewed as a symmetric $2$-tensor field on each slice $\Gamma_t$, satisfies
\begin{equation}\label{smoothupboundtn}
R^{g_\delta}(e_n,\cdot,\cdot,e_n)\leq -\frac{\lambda}{4\delta}\gamma_\delta(t)+\Lambda_1\gamma_\delta(t)\leq -\frac{\lambda}{8\delta}\gamma_\delta(t)
\end{equation}
for all $t\in[-\delta,\delta]$, provided $\delta$ is sufficiently small.

{\it Substep 1.2. Tangential curvatures.} Let $\varphi$ be an arbitrary bivector tangent to $\Gamma_t$. Applying \eqref{tt} to $g_\delta$ with $u\equiv 1$ on $\Gamma_t$, we obtain
\begin{equation}\label{curvoprgauss}
\Rm^{g_\delta}(\varphi,\varphi)=\Rm^{\gamma_\delta(t)}(\varphi,\varphi)-\frac{1}{4}\left(\gamma_{\delta}'(t)\wedge\gamma_{\delta}'(t)\right)(\varphi,\varphi).
\end{equation}
Via the local product structure on the tubular neighborhood, we identify each slice $\Gamma_s$ with $\Gamma$, and we use the same symbol $\varphi$ for the corresponding bivector on each slice. Applying \eqref{tt} to $g$ with $u\equiv 1$ on $\Gamma_{-\delta}$, we get
\begin{equation*}
\Rm^{g}(\varphi,\varphi)=\Rm^{\gamma(-\delta)}(\varphi,\varphi)-\frac{1}{4}\left(\gamma'(-\delta)\wedge\gamma'(-\delta)\right)(\varphi,\varphi).
\end{equation*}
Since $\mathcal R^g\leq K$ by assumption, we have
\begin{equation}\label{assucuroprupbound}
\Rm^{\gamma(-\delta)}(\varphi,\varphi)\leq K|\varphi|^2_{\gamma(-\delta)}+\frac{1}{4}\left(\gamma'(-\delta)\wedge\gamma'(-\delta)\right)(\varphi,\varphi).
\end{equation}
By \eqref{continuity} and \eqref{Eq: C0}, the metrics $\gamma_\delta(t)$ and $\gamma(-\delta)$ are $C^2$-close. Consequently,
\begin{equation}\label{curopercontinuity}
\Rm^{\gamma_\delta(t)}(\varphi,\varphi)=\Rm^{\gamma(-\delta)}(\varphi,\varphi)+O(\delta)\,|\varphi|^2_{\gamma(-\delta)}.
\end{equation}
We use the notation $O(\delta)A$ to make the dependence on the factor $A$ explicit: $O(\delta)A$ denotes a quantity $E$ with $|E|\le C\delta A$, where $C$ is a constant independent of $\delta$ and $A$, uniformly for $t\in [-\delta,\delta]$, and $C$ may change from line to line.

Combining \eqref{curvoprgauss}, \eqref{assucuroprupbound}, and \eqref{curopercontinuity}, we obtain
\begin{equation}\label{curvoprgauss'} 
\begin{split}
\Rm^{g_\delta}(\varphi,\varphi)\leq&\left(K+O(\delta)\right)|\varphi|^2_{\gamma(-\delta)}+\frac{1}{4}\left(\gamma'(-\delta)\wedge\gamma'(-\delta)\right)(\varphi,\varphi)\\
&-\frac{1}{4}\left(\gamma_{\delta}'(t)\wedge\gamma_{\delta}'(t)\right)(\varphi,\varphi).
\end{split}
\end{equation}
By \eqref{continuity}, we have 
\begin{equation}\label{secfcontinuity2}
\left(\gamma'(-\delta)\wedge\gamma'(-\delta)\right)(\varphi,\varphi)=4(\secf^-\wedge\secf^-)(\varphi,\varphi)+O(\delta)\,|\varphi|^2_{\gamma(0)}.
\end{equation}
For convenience, let 
\[A_t=-\frac{\delta+t}{\delta}\secf^{+}+\frac{\delta-t}{\delta}\secf^{-}.\] By \eqref{Eq: C1}, we have
\begin{equation}\label{secfcontinuity1}
\left(\gamma_{\delta}'(t)\wedge\gamma_{\delta}'(t)\right)(\varphi,\varphi)=(A_t\wedge A_t)(\varphi,\varphi)+O(\delta)\,|\varphi|_{\gamma(0)}^2.
\end{equation}
Substituting \eqref{secfcontinuity2} and \eqref{secfcontinuity1} into \eqref{curvoprgauss'} and using the uniform equivalence of the norms induced by $\gamma(-\delta)$, $\gamma(0)$, and $\gamma_\delta(t)$, we arrive at
\begin{equation}\label{smoothupboundttpre}
\Rm^{g_\delta}(\varphi,\varphi)\leq \left(K+O(\delta)\right)|\varphi|^2_{\gamma_\delta(t)}+(\secf^-\wedge\secf^-)(\varphi,\varphi)-\frac{1}{4}(A_t\wedge A_t)(\varphi,\varphi).
\end{equation}

Next, we prove 
\begin{equation}\label{secfmono}
(A_t\wedge A_t)(\varphi,\varphi)\geq 4(\secf^-\wedge\secf^-)(\varphi,\varphi).
\end{equation}
Via the local product identification, we view $\varphi$ at its base point $(p,t)\in\Gamma_t$ as a bivector in $\wedge^2T_p\Gamma$. By the positivity assumption, $-\left(\secf^{+}+\secf^{-}\right)$ defines an inner product on $T_p\Gamma$. With respect to this inner product, $\secf^{-}$ determines a self-adjoint endomorphism of $T_p\Gamma$. Let $\{\lambda_1,\cdots,\lambda_{n-1}\}$ be its eigenvalues, and let $\{v_1,\cdots,v_{n-1}\}$ be the corresponding orthonormal eigenvectors. Thus, for $1\leq i,j\leq n-1$, we have
\begin{equation*}
-\left(\secf^{+}+\secf^{-}\right)(v_i,v_j)=\delta_{ij}, \qquad \secf^{-}(v_i,v_j)=\lambda_i\delta_{ij}.
\end{equation*}
The $2$-convexity assumption implies that $\lambda_i+\lambda_j\geq 0$ whenever $i\neq j$.

Write $\varphi$ as $\varphi= \sum_{1\leq i<j\leq n-1} \alpha^{ij} v_i \wedge v_j$. Denote $(A_t\wedge A_t)(\varphi,\varphi)$ by $F(t)$. Then we have 
\begin{equation*}
F(t)=\sum_{1\leq i<j\leq n-1}\left(\alpha^{ij}\right)^2\left(\frac{\delta+t}{\delta}+2\lambda_i\right)\left(\frac{\delta+t}{\delta}+2\lambda_j\right).
\end{equation*}
A direct calculation yields
\begin{equation*}
F''(t)\equiv\frac{2}{\delta^{2}}\sum_{1\leq i<j\leq n-1}\left(\alpha^{ij}\right)^2\geq 0,
\end{equation*}
and
\begin{equation*}
F'(-\delta)=\frac{2}{\delta}\sum_{1\leq i<j\leq n-1}(\lambda_i+\lambda_j)\left(\alpha^{ij}\right)^2.
\end{equation*}
By the $2$-convexity assumption, $F'(-\delta)\geq 0$. It follows that $F(t)\geq F(-\delta)$ for all $t\in[-\delta,\delta]$, i.e., \eqref{secfmono} holds. Substituting \eqref{secfmono} into \eqref{smoothupboundttpre}, we finally obtain 
\begin{equation}\label{smoothupboundtt}
\Rm^{g_\delta}(\varphi,\varphi)\leq \left(K+O(\delta)\right)|\varphi|^2_{\gamma_\delta(t)}.
\end{equation}

{\it Substep 1.3. Mixed curvature terms.} By \eqref{Eq: C0} and \eqref{Eq: C1}, there exists a constant $\Lambda_2>0$ independent of $\delta$, such that
\begin{equation}\label{mixed-bound-nabla}
\sup_{t\in[-\delta,\delta]}\Big\|\nabla^{\gamma_\delta(t)}\gamma'_\delta(t)\Big\|_{C^0(\Gamma)}\le\Lambda_2.
\end{equation}
Let $\varphi$ be an arbitrary bivector and $\xi$ an arbitrary vector, both tangent to $\Gamma_t$. Applying \eqref{mixed} to $g_\delta$ with $u\equiv 1$, and using \eqref{Eq: C0} and \eqref{mixed-bound-nabla}, we obtain
\begin{equation}\label{smoothupboundmixed}
\left|\Rm^{g_\delta}(\varphi,\xi\wedge e_n)\right|\leq 2\Lambda_2|\varphi|_{\gamma_\delta(t)}|\xi|_{\gamma_\delta(t)}.
\end{equation}

{\it Substep 1.4. General curvatures.} 
Let $\omega$ be an arbitrary bivector at a point of $\Gamma_t$. Decompose $\omega$ as 
\[\omega=\psi+\beta\wedge e_n,\] 
where
$\psi$ is a bivector and $\beta$ is a vector, both tangent to $\Gamma_t$. Then we have
\begin{equation}\label{curoprexpansion21}
\Rm^{g_\delta}\left(\omega,\omega\right)=\Rm^{g_\delta}\left(\psi,\psi\right)+2\Rm^{g_\delta}\left(\psi,\beta\wedge e_n\right)+\Rm^{g_\delta}\left(\beta\wedge e_n,\beta\wedge e_n\right).
\end{equation}
Applying \eqref{smoothupboundtn}, \eqref{smoothupboundtt}, and \eqref{smoothupboundmixed} to \eqref{curoprexpansion21} yields
\begin{equation*}
\Rm^{g_\delta}\left(\omega,\omega\right)\leq \left(K+O(\delta)\right)|\psi|^2_{\gamma_\delta(t)}+4\Lambda_2|\psi|_{\gamma_\delta(t)}|\beta|_{\gamma_\delta(t)}-\frac{\lambda}{8\delta}|\beta|^2_{\gamma_\delta(t)}.
\end{equation*}
Using Young's inequality for the mixed term, we get
\begin{equation*}
4\Lambda_2|\psi|_{\gamma_\delta(t)}|\beta|_{\gamma_\delta(t)}\leq\frac{64\Lambda_2^2\delta}{\lambda} |\psi|^2_{\gamma_\delta(t)}+\frac{\lambda}{16\delta}|\beta|^2_{\gamma_\delta(t)}.
\end{equation*}
Therefore,
\begin{equation*}
\Rm^{g_\delta}\left(\omega,\omega\right)\leq \left(K+O(\delta)\right)|\psi|^2_{\gamma_\delta(t)}-\frac{\lambda}{16\delta}|\beta|^2_{\gamma_\delta(t)}.
\end{equation*}
Noting that $|\omega|^2_{g_\delta}=|\psi|^2_{\gamma_\delta(t)}+|\beta|^2_{\gamma_\delta(t)}$, we arrive at
\begin{equation*}
\Rm^{g_\delta}\left(\omega,\omega\right)\leq \left(K+O(\delta)\right)|\omega|^2_{g_\delta}.
\end{equation*}
Thus, for $\delta$ sufficiently small, we obtain $\mathcal R^{g_\delta}\le K+\varepsilon/2$ on $\Gamma\times[-\delta,\delta]$. Since $g_\delta=g$ outside this region, the same bound holds globally. 

{\it Step 2. Standard mollification.}
 After Step 1, we obtain a Riemannian manifold with corners along two hypersurfaces, both contained in the neighborhood $U$. By construction, the metric is $C^{1}$ across each corner. Away from the corners, the metric is smooth and has curvature operator bounded above by $K+\varepsilon/2$.

Next we smooth the two $C^1$ corners. Since this process can be carried out independently at each corner, it suffices to treat one of them, say the one along $\Gamma_\delta$. For notational convenience, we shift the $t$-coordinate so that the corner lies at $t=0$. After shrinking $\delta_0$ if necessary, we still have a normal-coordinate neighborhood $\Gamma\times[-\delta_0,\delta_0]\subset U$. We relabel the resulting $C^{1}$ metric (formerly $g_\delta$) by $g$. On $\Gamma\times[-\delta_0,\delta_0]$, $g$ has the form
\[
g = dt^2 + \gamma(t),
\]
where $\{\gamma(t)\}_{t \in [-\delta_0, \delta_0]}$ is a family of smooth metrics on $\Gamma$, depending smoothly on $t$ for $t\in[-\delta_0,0]$ and for $t\in[0,\delta_0]$, and $C^{1}$ across $t=0$.

Instead of applying a further interpolation step to upgrade the metric from \(C^{1}\) to \(C^{2}\) in Perelman’s construction (as presented in \cite{BWW}), we follow the cutoff-and-mollify approach of Reiser--Wraith. In \cite[Lemma 3.1]{RW23} they establish a mollification lemma for \(C^{1}\) functions of one variable whose second derivative has a jump discontinuity. This lemma is applied in \cite{RW232} to smooth a \(C^{1}\) metric across a corner by mollifying its local coefficient functions. For the reader’s convenience, we give a self-contained account of the argument; for our purposes, it is more convenient to formulate it directly in terms of the slice metrics \(\gamma(t)\).

Let $\eta:(-\infty,+\infty)\to [0,+\infty)$ be a nonnegative even bump function supported in $(-1,1)$, normalized so that $\int_{-\infty}^{+\infty}\eta(t)\,dt=1$. Also, let $\phi:(-\infty,+\infty)\to [0,1]$ be a cutoff function supported in $(-\delta_0,\delta_0)$ such that $\phi\equiv 1$ on $[-\delta_0/2,\delta_0/2]$. On $[-\delta_0,\delta_0]$, for $0<\delta<\delta_0/2$, define
\[
\gamma_{\delta}(t)=\phi(t)\int_{-1}^{1}\eta(s)\gamma\left(t-\delta s\right)ds+\left(1-\phi(t)\right)\gamma(t).
\]
A standard mollification estimate yields
\begin{equation}\label{secondsmoothestimate1}
\left\|\gamma_{\delta}(t)-\gamma(t)\right\|_{C^2(\Gamma)}=O(\delta).
\end{equation}
Differentiating with respect to $t$, we obtain
\[
\begin{split}
\gamma'_{\delta}(t)-\gamma'(t)=\ &\phi(t)\int_{-1}^{1}\eta(s)\left(\gamma'(t-\delta s)-\gamma'(t)\right)ds\\
&+\phi'(t)\int_{-1}^{1}\eta(s)\left(\gamma(t-\delta s)-\gamma(t)\right)ds,
\end{split}
\]
and hence
\begin{equation}\label{secondsmoothestimate2}
\left\|\gamma'_{\delta}(t)-\gamma'(t)\right\|_{C^2(\Gamma)}=O(\delta).
\end{equation}

For the second $t$-derivative, note that if $|t|\ge \delta$, then $t-\delta s$ stays on a single smooth side of the corner for all $s\in[-1,1]$. Differentiating once more, we obtain
\begin{equation*}
\begin{split}
\gamma''_{\delta}(t)-\gamma''(t)=\ &\phi(t)\int_{-1}^{1}\eta(s)\left(\gamma''(t-\delta s)-\gamma''(t)\right)ds\\
&+2\phi'(t)\int_{-1}^{1}\eta(s)\left(\gamma'(t-\delta s)-\gamma'(t)\right)ds\\
&+\phi''(t)\int_{-1}^{1}\eta(s)\left(\gamma(t-\delta s)-\gamma(t)\right)ds.
\end{split}
\end{equation*}
It follows that for $|t|\ge \delta$,
\begin{equation}\label{secondsmoothestimate3}
\left\|\gamma''_{\delta}(t)-\gamma''(t)\right\|_{C^2(\Gamma)}=O(\delta).
\end{equation}
On $(-\delta,\delta)$, we have $\phi(t)\equiv 1$, and hence
\[
\gamma_{\delta}(t)=\int_{-1}^{1}\eta(s)\gamma(t-\delta s)\,ds.
\]
Consequently, for $t\in(-\delta,\delta)$, $\gamma''_{\delta}(t)$ satisfies the following approximate interpolation property:
\begin{equation}\label{secondsmoothestimate4}
\left\|\gamma''_{\delta}(t)-\Big(\theta_t\gamma''(0^+)+(1-\theta_t)\gamma''(0^-)\Big)\right\|_{C^2(\Gamma)}=O(\delta),
\end{equation}
where
\[
\theta_t=\int_{-1}^{t/\delta}\eta(s)\,ds.
\]
On $\Gamma\times [-\delta_0,\delta_0]$, we replace $g$ by 
\[g_\delta=dt^2+\gamma_{\delta}(t).\] The resulting metric is smooth everywhere; it remains to verify the curvature requirements. 

Let $\omega$ be an arbitrary bivector at a point of $\Gamma_t$. Write $\omega$ as $$\omega=\psi+\beta\wedge e_n,$$ where
$\psi$ is a bivector and $\beta$ a vector, both tangent to $\Gamma_t$. Then we have
\begin{equation*}
\Rm^{g_\delta}\left(\omega,\omega\right)=\Rm^{g_\delta}\left(\psi,\psi\right)+2\Rm^{g_\delta}\left(\psi,\beta\wedge e_n\right)+\Rm^{g_\delta}\left(\beta\wedge e_n,\beta\wedge e_n\right).
\end{equation*}

We first estimate $\Rm^{g_\delta}\left(\beta\wedge e_n,\beta\wedge e_n\right)$. Applying \eqref{tn} to $g_\delta$ with $u\equiv 1$ on $\Gamma_{t}$, we obtain
\begin{equation}\label{tn-gdelta}
\Rm^{g_\delta}\left(\beta\wedge e_n,\beta\wedge e_n\right)=-\frac{1}{2}\gamma''_\delta(t)(\beta,\beta)+\frac{1}{4}\left(\gamma'_\delta(t)\right)^2(\beta,\beta).
\end{equation}
Similarly, for $g$ and $t\neq 0$, we have
\begin{equation}\label{tn-g}
\Rm^{g}\left(\beta\wedge e_n,\beta\wedge e_n\right)=-\frac{1}{2}\gamma''(t)(\beta,\beta)+\frac{1}{4}\left(\gamma'(t)\right)^2(\beta,\beta).
\end{equation}
Letting $t\to 0^\pm$, we define $\Rm^{g_\pm}$ as the corresponding one-sided limits, so that
\begin{equation}\label{tn-gpm}
\Rm^{g_\pm}\left(\beta\wedge e_n,\beta\wedge e_n\right)=-\frac{1}{2}\gamma''(0^\pm)(\beta,\beta)+\frac{1}{4}\left(\gamma'(0)\right)^2(\beta,\beta).
\end{equation}
For $|t|\geq \delta$, comparing \eqref{tn-gdelta} with \eqref{tn-g} and using \eqref{secondsmoothestimate2}, \eqref{secondsmoothestimate3}, we deduce
\begin{equation}\label{tnsecsmoothcase1}
\Rm^{g_\delta}\left(\beta\wedge e_n,\beta\wedge e_n\right)=\Rm^{g}\left(\beta\wedge e_n,\beta\wedge e_n\right)+O(\delta)\,|\beta|^2_{\gamma(0)}.
\end{equation}
For $|t|<\delta$, comparing \eqref{tn-gdelta} with \eqref{tn-gpm} and using \eqref{secondsmoothestimate2}, \eqref{secondsmoothestimate4}, we arrive at
\begin{equation}\label{tnsecsmoothcase2}
\begin{split}
\Rm^{g_\delta}\left(\beta\wedge e_n,\beta\wedge e_n\right)
=&\ \theta_t\Rm^{g_+}\left(\beta\wedge e_n,\beta\wedge e_n\right)\\
&+\left(1-\theta_t\right)\Rm^{g_-}\left(\beta\wedge e_n,\beta\wedge e_n\right)+O(\delta)\,|\beta|^2_{\gamma(0)}.
\end{split}
\end{equation}

For $\Rm^{g_\delta}\left(\psi,\psi\right)$, by \eqref{tt}, \eqref{secondsmoothestimate1} and \eqref{secondsmoothestimate2}, we obtain
\begin{equation}\label{ttsecsmooth}
\begin{split}
\Rm^{g_\delta}(\psi,\psi)&=\Rm^{\gamma_\delta(t)}(\psi,\psi)-\frac{1}{4}\left(\gamma_{\delta}'(t)\wedge\gamma_{\delta}'(t)\right)(\psi,\psi)\\
&=\Rm^{\gamma(t)}(\psi,\psi)-\frac{1}{4}\left(\gamma'(t)\wedge\gamma'(t)\right)(\psi,\psi)+O(\delta)\,|\psi|_{\gamma(0)}^2\\
&=\Rm^{g}(\psi,\psi)+O(\delta)\,|\psi|_{\gamma(0)}^2.
\end{split}
\end{equation}

For the mixed term $\Rm^{g_\delta}(\psi,\beta\wedge e_n)$, note that it can be written in terms of $\nabla^{\gamma_\delta(t)}\gamma'_\delta(t)$ (cf. \eqref{mixed}); in particular, it involves no second $t$-derivatives of $\gamma_\delta(t)$. Arguing as above, we get
\begin{equation}\label{mixsecsmooth}
\Rm^{g_\delta}(\psi,\beta\wedge e_n)=\Rm^{g}(\psi,\beta\wedge e_n)+O(\delta)\,|\psi|_{\gamma(0)}|\beta|_{\gamma(0)}.
\end{equation}

We return to $\Rm^{g_\delta}\left(\omega,\omega\right)$. Note that $|\omega|^2_{g_\delta}=|\psi|^2_{\gamma_\delta(t)}+|\beta|^2_{\gamma_\delta(t)}$, and that the norms induced by $\gamma_\delta(t)$ and $\gamma(0)$ are uniformly equivalent. For $|t|\geq\delta$, \eqref{tnsecsmoothcase1}, \eqref{ttsecsmooth} and \eqref{mixsecsmooth} imply
\begin{equation}
\Rm^{g_\delta}\left(\omega,\omega\right)=\Rm^{g}\left(\omega,\omega\right)+O(\delta)\,|\omega|^2_{g_\delta}.
\end{equation}
For $|t|<\delta$, \eqref{tnsecsmoothcase2}, \eqref{ttsecsmooth} and \eqref{mixsecsmooth} yield
\begin{equation}
\Rm^{g_\delta}\left(\omega,\omega\right)=\theta_t\Rm^{g_+}\left(\omega,\omega\right)+(1-\theta_t)\Rm^{g_-}\left(\omega,\omega\right)+O(\delta)\,|\omega|^2_{g_\delta}.
\end{equation}
In either case, we have 
\[
\Rm^{g_\delta}\left(\omega,\omega\right)\leq\left(K+\frac{\varepsilon}{2}+O(\delta)\right)|\omega|^2_{g_\delta}.
\]
Thus, for $\delta$ sufficiently small, we conclude that $\mathcal R^{g_\delta}\le K+\varepsilon$ on $\Gamma\times[-\delta_0,\delta_0]$.

Finally, define $\tilde g$ on $M$ by
\begin{equation*}
\tilde g=\left\{
\begin{aligned}
&\,g\quad\ \mbox{on}\ M\setminus\left(\Gamma\times[-\delta_0,\delta_0]\right),\\
&\,g_\delta\quad\mbox{on}\ \Gamma\times[-\delta_0,\delta_0].
\end{aligned}
\right.
\end{equation*}
By construction, $\tilde g$ satisfies all the requirements of the theorem.
\end{proof}

\begin{proof}[Proof of Theorem \ref{thm: smoothing sec} (sectional-curvature case)] We use the same notation and setting as in the proof of the curvature-operator case. 

{\it Step 1. $C^1$-interpolation.} We employ the same interpolation polynomial and proceed to verify the sectional-curvature upper bound for $g_{\delta}$ on $\Gamma\times[-\delta,\delta]$.

{\it Substep 1.1. Tangential-normal sectional curvatures and mixed terms.} The argument for these two estimates is exactly the same as that in the curvature-operator case: it uses only the regularity across the corner and the negativity of $\secf^{+}+\secf^{-}$, and in particular does not invoke any curvature upper bound. We therefore omit the derivation and record only the resulting estimates. Namely, there exist positive constants $\lambda$ and $\Lambda$, independent of $\delta$, such that for all $t\in[-\delta,\delta]$ and for any vector $\beta$ and bivector $\psi$ tangent to $\Gamma_t$, we have
\begin{equation}\label{smoothupboundtn'}
 \Rm^{g_\delta}(\beta\wedge e_n,\beta\wedge e_n)\leq -\frac{\lambda}{\delta}|\beta|^2_{\gamma_\delta(t)},
\end{equation}
and
\begin{equation}\label{smoothupboundmixed'}
\left|\Rm^{g_\delta}(\psi,\beta\wedge e_n)\right|\leq \Lambda|\psi|_{\gamma_\delta(t)}|\beta|_{\gamma_\delta(t)}.
\end{equation}

{\it Substep 1.2. Tangential sectional curvatures.} Let $\varphi$ be an arbitrary non-zero decomposable bivector tangent to $\Gamma_t$, since we are in the sectional-curvature setting. By the assumption $\sec^g\leq K$, we have 
$$\Rm^g(\varphi,\varphi)\leq K|\varphi|^2_{\gamma(t)}.$$
As before, let
\[A_t=-\frac{\delta+t}{\delta}\secf^{+}+\frac{\delta-t}{\delta}\secf^{-}.\] Repeating the corresponding argument in the curvature-operator case, we obtain 
\begin{equation*}
\Rm^{g_\delta}(\varphi,\varphi)\leq \left(K+O(\delta)\right)|\varphi|^2_{\gamma_\delta(t)}+(\secf^-\wedge\secf^-)(\varphi,\varphi)-\frac{1}{4}(A_t\wedge A_t)(\varphi,\varphi).
\end{equation*}
It remains to prove
\begin{equation*}
(A_t\wedge A_t)(\varphi,\varphi)\geq 4(\secf^-\wedge\secf^-)(\varphi,\varphi).
\end{equation*}
Let $P$ be the $2$-plane determined by $\varphi$. On $P$, $-\left(\secf^{+}+\secf^{-}\right)$ induces an inner product and $\secf^{-}$ defines a self-adjoint operator with respect to this inner product. Let $\lambda_1$ and $\lambda_2$ be the eigenvalues of this self-adjoint operator, and let $v_1$ and $v_2$ be the corresponding unit eigenvectors. In particular, for $1\leq i,j\leq 2$, we have
\begin{equation*}
-\left(\secf^{+}+\secf^{-}\right)(v_i,v_j)=\delta_{ij}, \qquad \secf^{-}(v_i,v_j)=\lambda_i\delta_{ij}.
\end{equation*}
The $2$-convexity assumption implies that $\lambda_1+\lambda_2\geq 0$. Arguing as in the curvature-operator case, but now for the decomposable bivector $v_1\wedge v_2$, we obtain 
$$\left(A_t\wedge A_t\right)(v_1\wedge v_2,v_1\wedge v_2)\geq 4(\secf^-\wedge\secf^-)(v_1\wedge v_2,v_1\wedge v_2).$$
Since $\varphi=cv_1\wedge v_2$ for some constant $c$, the desired inequality follows. Finally, for $\delta>0$ sufficiently small,  uniformly in $\varphi$, we have
\begin{equation}\label{smoothupboundtt'}
\Rm^{g_\delta}(\varphi,\varphi)\leq \left(K+\frac{\varepsilon}{4}\right)|\varphi|^2_{\gamma_\delta(t)}.
\end{equation}

{\it Substep 1.3. General sectional curvatures.} Let $p\in\Gamma_t$, and let $P\subset T_pM$ be an arbitrary $2$-plane. Choose a basis $v, w$ of $P$. Decompose $v$ and $w$ as
$$
v=v_T+re_n,\qquad w=w_T+se_n,
$$
where $v_T$ and $w_T$ are tangent to $\Gamma_t$ and $s,r\in\mathbb R$. We consider the following two cases: 

{\it Case 1.} $r=0$ or $s=0$. These two subcases are essentially the same, so without loss of generality we assume $r=0$. Then
\begin{equation}\label{secexpansion}
\begin{split}
\Rm^{g_{\delta}}(v\wedge w,v\wedge w)
=&\,\Rm^{g_{\delta}}(v_T\wedge w_T,v_T\wedge w_T)+2s\Rm^{g_{\delta}}(v_T\wedge w_T,v_T\wedge e_n)\\&+s^2\Rm^{g_{\delta}}(v_T\wedge e_n,v_T\wedge e_n).
\end{split}
\end{equation}
Applying \eqref{smoothupboundtn'}, \eqref{smoothupboundmixed'}, and \eqref{smoothupboundtt'} to \eqref{secexpansion}, we arrive at
\begin{equation*}
\begin{split}
\Rm^{g_{\delta}}(v\wedge w,v\wedge w)\leq&\left(K+\frac{\varepsilon}{4}\right)|v_T\wedge w_T|^2_{\gamma_{\delta}(t)}+2\Lambda |s||v_T\wedge w_T|_{\gamma_{\delta}(t)}|v_T|_{\gamma_{\delta}(t)}\\
&\,-\frac{\lambda }{\delta}s^2|v_T|^2_{\gamma_{\delta}(t)}.
\end{split}
\end{equation*}
Using Young's inequality for the mixed term, we have 
$$2\Lambda |s||v_T\wedge w_T|_{\gamma_{\delta}(t)}|v_T|_{\gamma_{\delta}(t)}\leq\frac{2\Lambda^2\delta}{\lambda}\left|v_T\wedge w_T\right|^2_{\gamma_{\delta}(t)}+\frac{\lambda }{2\delta}s^2|v_T|^2_{\gamma_{\delta}(t)}.$$
Hence,
\begin{equation*}
\Rm^{g_{\delta}}(v\wedge w,v\wedge w)\leq \left(K+\frac{\varepsilon}{4}+\frac{2\Lambda^2\delta}{\lambda}\right)\left|v_T\wedge w_T\right|^2_{\gamma_{\delta}(t)}-\frac{\lambda }{2\delta}s^2|v_T|^2_{\gamma_{\delta}(t)}.
\end{equation*}
Note that 
\[|v\wedge w|^2_{g_{\delta}}=|v_T\wedge w_T|^2_{\gamma_{\delta}(t)}+s^2|v_T|^2_{\gamma_{\delta}(t)}.\] Thus, for $\delta>0$ sufficiently small, uniformly in $P$, we have
\[
\sec^{g_\delta}(P)\le K+\frac{\varepsilon}{2}.
\]

{\it Case 2.} $r\neq 0$, $s\neq 0$. Since $$\Span\{v_T+re_n,w_T+se_n\}=\Span\{sv_T-rw_T,w_T+se_n\},$$
this case reduces to Case 1. 

{\it Step 2. Standard mollification.}
We carry out the mollification exactly as in the curvature-operator case. To verify the sectional-curvature bound, we use the reduction in Substep 1.3, decomposing the curvature of a general 2-plane into tangential-normal, pure tangential, and mixed terms. We then estimate each term as in the curvature-operator case.
\end{proof}

Finally, we prove Proposition \ref{practical_use}, which is purely based on linear algebra.

\begin{proof}[Proof of Proposition \ref{practical_use}]
Fix $p\in \Gamma$ and a $2$-plane $P\subset T_p\Gamma$. Let \(\mu_1\) and \(\mu_2\) be the eigenvalues of \(\secf^{-}|_P\) with respect to \(-\secf^+|_P\). By the assumption \(-(\secf^-+\secf^+)|_P>0\), we have $\mu_i<1$ for $i=1,2$. By the $2$-convexity of $\secf^-$ with respect to $-\secf^+$, we have \(\mu_1 + \mu_2\geq 0\).

Now, let \(\lambda_1\) and \(\lambda_2\) be the eigenvalues of \(\secf^{-}|_P\) with respect to \(-(\secf^{+} + \secf^{-})|_P\). Then,  
\[
\lambda_i =\frac{\mu_i}{1-\mu_i},\qquad i=1,\,2.
\]
It follows that
\[
\lambda_1+\lambda_2=\frac{\mu_1+\mu_2-2\mu_1\mu_2}{(1-\mu_1)(1-\mu_2)}.
\]
Since \(0\leq\mu_1 + \mu_2<2\), the numerator of the right-hand side satisfies
\[
\mu_1+\mu_2-2\mu_1\mu_2\geq \frac{1}{2}(\mu_1+\mu_2)^2-2\mu_1\mu_2\geq 0.
\]
This concludes the proof.
\end{proof}

\section{Extension and obstruction results for positive curvature conditions}\label{Sec: 4}

\subsection{Extension for positive Ricci curvature} In this subsection, we prove the following result:
\begin{theorem}[Restatement of Theorem \ref{Thm:main 1}]\label{Thm: Ric extension}
Let \(M^n\) be a compact manifold with boundary, where \(n\geq2\). Given any \(K>0\), every smooth Riemannian metric \(\gamma\) on \(\partial M\) can be extended to a smooth Riemannian metric \(g\) on \(M\) such that \(\Ric^g>Kg\).
\end{theorem}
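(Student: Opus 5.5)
We will follow the boundary-replacement strategy outlined in the introduction. First we fix an initial metric $g_{\rm ini}$ on $M$ with $\Ric^{g_{\rm ini}}>Kg_{\rm ini}$. By Gromov's $h$-principle \cite{Gro1969}, the open manifold $\mathring M$, or better a slightly enlarged open manifold $M\cup(\partial M\times[0,1))$, carries a metric of positive sectional curvature. Restricting this metric to a compact copy of $M$ and rescaling it by a small constant yields $\Ric>Kg_{\rm ini}$ on $M$, including up to the boundary. No control on $\gamma_0:=g_{\rm ini}|_{\partial M}$ or on the boundary second fundamental form is needed at this stage, except for boundedness, which is automatic by compactness. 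Let $\secf^{\rm ini}$ denote the second fundamental form of $\partial M$ in $(M,g_{\rm ini})$ with respect to the \emph{outward} normal. Then there is $\Lambda>0$ with $\secf^{\rm ini}\ge -\Lambda\gamma_0$.

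Next we arrange the metric inequality required by Lemma \ref{Lem:PRicciCcobordismlm}. Replace $\gamma$ by $c\gamma$ for a small constant $c>0$ so that $c\gamma<\gamma_0$. We plan to build the neck for $c\gamma$ and at the end rescale the whole metric by $c^{-1}$; the condition $\Ric>Kg$ is then only improved, since $\Ric^{c^{-1}g}=\Ric^g>Kg=cK\,(c^{-1}g)$. Because of this, we should run the whole construction with $K$ replaced by $K/c$. The order is therefore: fix $c$ from $\gamma_0$ and $\gamma$, and only then choose the initial metric with $\Ric>(K/c)g_{\rm ini}$. This is a circularity, since $\gamma_0$ depends on $g_{\rm ini}$. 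To break it, rescaling $g_{\rm ini}$ by $s<1$ multiplies $\Ric$-lower bounds by $1/s$ and $\gamma_0$ by $s$, so both move favorably together. Concretely, fix any $g_{\rm ini}$ with $\Ric>0$ and then choose $c$ and a scaling of $g_{\rm ini}$ consistently; alternatively, simply avoid rescaling $\gamma$ by choosing $g_{\rm ini}$ so large that $\gamma<\gamma_0$ while $\Ric>Kg_{\rm ini}$ still holds. I expect this bookkeeping is best handled by keeping the target $\gamma$ fixed and shrinking nothing. Instead, take $g_{\rm ini}=s^{-1}h$ with $\Ric^h>0$, so that $\Ric^{g_{\rm ini}}\ge s\mu\, g_{\rm ini}$, which is weak. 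So the clean route is the final global rescaling by $c^{-1}$ described above.

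With $\gamma_t=(1-t)\gamma_0+tc\gamma$, we have $\gamma_t'=c\gamma-\gamma_0<0$. Lemma \ref{Lem:PRicciCcobordismlm}, with constant $K'=\max\{K/c,\Lambda+1\}$, produces a neck $\partial M\times[0,1]$ with $\Ric\ge K'g$, slice $\partial M\times\{1\}$ isometric to $c\gamma$, and $\secf\ge K'\gamma_0$ along $\partial M\times\{0\}$ with respect to the normal pointing into the neck. Glue the neck to $(M,g_{\rm ini})$ along $\partial M\cong\partial M\times\{0\}$. Here $g_{\rm ini}$ is chosen with $\Ric>K'g_{\rm ini}$; this is possible because $c$ depends only on $\gamma_0,\gamma$, and we choose $g_{\rm ini}$ first as $h$, then $c$, then verify. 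The residual issue is that $\Ric^h>K'h$ need not hold for the fixed $h$. This is the main obstacle, and I would resolve it by noting that the Gromov metric can be taken with arbitrarily large sectional curvature in a \emph{fixed} conformal/size class. Alternatively, and more cleanly, apply Lemma \ref{Lem:PRicciCcobordismlm} with $K''$ larger than needed, so that only the neck's lower bound must be large. The gluing then uses only $\Ric>K\!\cdot$ on $M$ after the final scaling, with the scaling absorbed by picking $c$ no smaller than necessary.

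At the corner, the sum of the second fundamental forms with respect to the normals pointing into each side is $\secf^{\rm ini}+\secf\ge(K'-\Lambda)\gamma_0>0$. Perelman's Theorem \ref{Lem: corner smoothing Ricci}, in the version preserving $\Ric>K$ \cite[Corollary B]{RW232}, smooths the corner in a small neighborhood away from $\partial M\times\{1\}$. The result is a smooth metric on $M\cup(\partial M\times[0,1])\cong M$ with the required Ricci lower bound and boundary metric $c\gamma$. Multiplying by $c^{-1}$ gives boundary metric $\gamma$ and $\Ric>Kg$.
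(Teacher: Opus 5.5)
There is a genuine gap in your construction of the initial metric, and no rescaling bookkeeping can close it. Your neck, gluing and smoothing steps are otherwise the paper's. For a metric $g$ on $M$, let $\mu(g)$ be the best constant with $\Ric^g\ge\mu(g)\,g$ on $M$, and let $a(g)$ be the best constant with $g|_{\partial M}\ge a(g)\,\gamma$. Under $g\mapsto sg$ one has $\mu\mapsto\mu/s$ and $a\mapsto sa$, so the product $\mu(g)a(g)$ is scale invariant. Your scheme imposes two requirements:
\begin{itemize}
\item $c\gamma<\gamma_0$, so that $\gamma_t'<0$;
\item $\Ric^{g_{\rm ini}}>(K/c)\,g_{\rm ini}$, because the final factor $c^{-1}>1$ turns a bound $K/c$ into $K$. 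This weakens Ricci lower bounds rather than improving them, contrary to your first remark.
\end{itemize}
Together these say exactly $\mu(g_{\rm ini})\,a(g_{\rm ini})>K$. Equivalently, $c^{-1}g_{\rm ini}$ must already satisfy $\Ric>K$ with boundary metric $>\gamma$. Choosing $c$ differently, fixing the constants in another order, or using a larger constant in Lemma~\ref{Lem:PRicciCcobordismlm} cannot help. The neck lemma only controls the neck, while the final rescaling also acts on $(M,g_{\rm ini})$, whose metric survives unchanged away from the corner. For a metric produced by Gromov's $h$-principle, nothing guarantees $\mu a>K$ when $K$ or $\gamma$ is large. Your assertion that such metrics can have arbitrarily large curvature ``in a fixed size class'' is precisely the missing statement, not a resolution of it.

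The paper proves this as Lemma~\ref{lem:large-boundary-metric}, which gives a metric with $\Ric^g>Kg$ and $g|_{\partial M}>\gamma$. The mechanism is not scaling but moving the boundary inside a fixed ambient metric, in three steps:
\begin{itemize}
\item Take $\bar g$ with $\Ric^{\bar g}>K\bar g$ on an open enlargement $N\supset M$ (Streil).
\item Use the increasing-path lifting in Gromov's $C^1$-fibration theorem, a Nash--Kuiper-type stretching, to isotope the inclusion $F_0\colon\partial M\hookrightarrow N$ through embeddings to some $F_1$ with $F_1^*\bar g>\gamma$. In effect the boundary is wrinkled so that it becomes long, while the ambient curvature bound is untouched.
\item Pull back $\bar g$ by the ambient diffeotopy supplied by the isotopy extension theorem.
\end{itemize}
With that initial metric, your remaining steps go through directly with no rescaling of $\gamma$: linear interpolation, the neck lemma with a constant large enough to dominate the boundary second fundamental form, and Perelman smoothing in the $\Ric>K$ version.

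A minor point: you define $\secf^{\rm ini}$ with respect to the outward normal but then use it in Perelman's condition as if taken with respect to the normal pointing into $M$. Since you only need a two-sided bound $|\secf^{\rm ini}|\le\Lambda\gamma_0$, this is harmless.
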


The proof uses a boundary-replacement argument. The following lemma guarantees the existence of a suitable initial metric.
\begin{lemma}\label{lem:large-boundary-metric}
Let \(M^n\) be a compact manifold with boundary, where
\(n\geq 2\). Given any \(K>0\) and any smooth Riemannian metric
\(\gamma\) on \(\partial M\), there exists a smooth Riemannian metric
\(g\) on \(M\) such that
\[
 \Ric^g>Kg \qquad\text{and}\qquad g|_{\partial M}>\gamma.
\]
\end{lemma}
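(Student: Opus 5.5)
Start from Gromov's $h$-principle \cite{Gro1969}, as recalled in the outline: the open manifold $\mathring M$, or better a slightly larger open manifold $M\cup_{\partial M}(\partial M\times[0,1))$, carries a metric $g_0$ of positive sectional curvature, hence of positive Ricci curvature. Restrict $g_0$ to a compact copy of $M$ inside this open manifold, for instance $M$ itself, on which $g_0$ is smooth up to the boundary. Compactness gives a constant $c>0$ with $\Ric^{g_0}\ge c\,g_0$ on $M$.

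Next, scale. For a constant $\mu>0$, set $g_\mu=\mu^2 g_0$. Then $\Ric^{g_\mu}=\Ric^{g_0}\ge c\,g_0=c\mu^{-2}g_\mu$. Taking $\mu$ small makes $c\mu^{-2}>K$, but it also shrinks the boundary metric, which is the wrong direction. So scaling alone conflicts with $g|_{\partial M}>\gamma$. This is the main obstacle: we need a large boundary metric and a large Ricci lower bound simultaneously, while scaling trades one against the other.

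To resolve it, I would use the freedom in the $h$-principle. Gromov's theorem yields metrics with sectional curvature bounded below by any prescribed positive function on an open manifold. More concretely, Streil \cite{Streil2017} gives sectional curvature in any prescribed open interval, e.g.\ in $(K,K+1)$, so $\Ric>(n-1)Kg\ge Kg$ on all of the open manifold. The remaining task is making the boundary large without rescaling. I would enlarge the boundary by a diffeomorphism. Let $W=M\cup(\partial M\times[0,1))$ carry a metric $h$ with $\sec^h\in(K,K+1)$. Choose an embedding $\Phi\colon M\to W$ that is isotopic to the inclusion and whose image boundary $\Phi(\partial M)$ is pushed far out toward the open end, into $\partial M\times\{s\}$ with $s\to1$. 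If $h$ is incomplete toward that end, we can arrange, by choosing the Gromov/Streil metric on $W$ appropriately, that the slices $\partial M\times\{s\}$ have induced metrics growing without bound. Then $g=\Phi^*h$ has $\Ric^g>Kg$, and for $s$ close to $1$ we get $g|_{\partial M}>\gamma$ by compactness of $\partial M$.

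Arranging that growth is the delicate step. What I would try first is to realize $W$ as an open subset of a manifold in which $\partial M\times\{s\}$ accumulates. One option is to precompose the Streil metric on $W$ with a diffeomorphism $W\to W$ that is the identity on $M$ and pushes the end so that a prescribed slice sequence is stretched. This fails, since diffeomorphisms don't change intrinsic slice sizes. The better route is the dual trick: take the Streil metric with $\sec\in(K',K'+1)$ for a huge $K'=K\mu^{-2}$, then scale by $\mu^{-2}$ to enlarge everything. The Ricci bound becomes $\Ric>(n-1)K'\mu^2 g=(n-1)Kg$, and the boundary metric is multiplied by $\mu^{-2}$. Choosing $\mu$ small so that $\mu^{-2}h|_{\partial M}>\gamma$ finishes the argument, since the Streil bound is scale-covariant and holds for every $K'$. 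I expect that only this interplay, with the pinching interval prescribed arbitrarily before scaling, needs care, and that it removes the obstacle.
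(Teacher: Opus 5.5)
There is a genuine gap in your final step, and it is exactly the obstacle you identified. The Streil metric $h$ depends on $K'=K\mu^{-2}$, and nothing controls $h|_{\partial M}$ as $K'\to\infty$. When you conclude that $\mu^{-2}h|_{\partial M}>\gamma$ for small $\mu$, you are implicitly treating $h|_{\partial M}$ as independent of $\mu$.

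In fact the trick is circular. The map $h\mapsto\mu^{-2}h$ is a bijection from metrics with $\sec\in(K\mu^{-2},K\mu^{-2}+1)$ onto metrics with $\sec\in(K,K+\mu^2)$. So finding such an $h$ with $\mu^{-2}h|_{\partial M}>\gamma$ is the same as finding a metric $g$ with $\sec^g\in(K,K+\mu^2)$ and $g|_{\partial M}>\gamma$. That is a stronger form of the lemma itself. Since $(g,K,\gamma)\mapsto(cg,K/c,c\gamma)$ is a symmetry of the problem, no rescaling can make the boundary metric large; you need a mechanism that is not scale-covariant.

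The missing idea is to keep the ambient metric fixed and change the embedding of $\partial M$. Your remark that diffeomorphisms do not change slice sizes applies only to hypersurfaces that are slices. A hypersurface isotopic to $\partial M$ inside a thin collar can have arbitrarily large induced metric if it is corrugated. When $\dim\partial M\ge 2$, graphs over slices are not enough: a graph $\{r=u\}$ in a collar $dr^2+\eta_r$ only gains the rank-one term $du\otimes du$. Stretching in all directions requires convex integration.

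The paper proceeds in four steps.
\begin{enumerate}
\item It takes $\bar g$ from Streil on an open extension $N\supset M$ with $\Ric^{\bar g}>K\bar g$.
\item It lifts the increasing path $q_t=F_0^*\bar g+t\gamma$ through Gromov's $C^1$-fibration theorem (Nash--Kuiper in codimension one). This gives $C^1$ embeddings $G_t\colon\partial M\to\mathcal U$ with $G_t^*\bar g=q_t$.
\item It smooths $G_t$ to a smooth isotopy $F_t$ with $F_1^*\bar g>\gamma$. The strict inequality survives $C^1$-approximation because $G_1^*\bar g-\gamma=F_0^*\bar g>0$.
\item It extends $F_t$ to an ambient diffeotopy $\Phi_t$ of $N$ by the isotopy extension theorem and sets $g=(\Phi_1|_M)^*\bar g$.
\end{enumerate}
Then $\Ric^g>Kg$, because the bound is invariant under pullback by diffeomorphisms, while $g|_{\partial M}=F_1^*\bar g>\gamma$.

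Your intermediate idea of an embedding $\Phi\colon M\to W$ isotopic to the inclusion had the right shape. What it needed was to crumple $\Phi(\partial M)$ inside a collar rather than push it toward the end, where the Gromov/Streil metric gives you no control.
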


The proof of this lemma combines the \(h\)-principle for sectional curvature on open manifolds, Nash--Kuiper stretching of the boundary embedding, and the isotopy extension theorem. Gromov gave a one-sentence sketch of this construction in the scalar-curvature setting \cite[Section~3.12.1, p.~228]{Gro2023}. Following his approach, we give a detailed proof in the Ricci curvature setting for the reader’s convenience.

\begin{proof}
We divide the proof into three steps.

\noindent\emph{Step 1: Construction of an ambient metric.}
Extend \(M\) slightly beyond \(\partial M\) to obtain an open manifold \(N\). By 
Theorem~1.1 in \cite{Streil2017}, \(N\) admits a smooth Riemannian metric \(\bar g\) such that
\(\Ric^{\bar g}>K\bar g\) on \(N\).

\noindent\emph{Step 2: Stretching the boundary embedding.}
Let \(Y=\partial M\), and let
\(
    F_0\colon Y\hookrightarrow N
\)
be the inclusion. Choose an open neighborhood \(\mathcal U\subset N\) of \(F_0(Y)\). We will construct a smooth isotopy $F_t: Y\hookrightarrow N$, starting at \(F_0\), such that
\[
    F_1^*\bar g>\gamma.
\]
Set \[
h_0=F_0^*\bar g, \qquad\text{and}\qquad q_t=h_0+t\gamma, \quad\text{for } t\in[0,1].
\]
Then $q_t$ is a strictly increasing path of metrics.

Let \(\operatorname{Emb}^1(Y,\mathcal U)\) denote the space of
\(C^1\) embeddings from \(Y\) into \(\mathcal U\), equipped with the
\(C^1\) topology, and let \(\operatorname{Met}^0(Y)\) denote the space
of continuous Riemannian metrics on \(Y\), equipped with the \(C^0\)
topology. Consider the pullback-metric map
\[
    P_{\bar g}\colon
    \operatorname{Emb}^1(Y,\mathcal U)
    \longrightarrow
    \operatorname{Met}^0(Y),
    \qquad
    P_{\bar g}(H)=H^*\bar g.
\]
By the increasing-path lifting clause of Gromov's \(C^1\)-fibration
theorem \cite[Section~2.4, pp.~186--187]{Gro2017}, the path
\(q_t\), together with its initial lift \(F_0\), admits a lift
\[
    G_t\in\operatorname{Emb}^1(Y,\mathcal U),
    \qquad
    G_0=F_0,
    \qquad
    G_t^*\bar g=q_t,
\]
which is continuous in \(t\) with respect to the \(C^1\) topology. In
particular,
\[
    G_1^*\bar g=h_0+\gamma>\gamma.
\]

Reparametrize the lift so that it is constant near \(t=0\). A standard relative smoothing argument, performed
uniformly in the fiberwise \(C^1\) topology, gives a jointly smooth
family
\[
    F\colon[0,1]\times Y\longrightarrow\mathcal U,
    \qquad F_t=F(t,\cdot),
\]
which agrees with \(F_0\) near \(t=0\) and is uniformly fiberwise
\(C^1\)-close to \(G_t\). Since \(Y\) is compact and
\(\operatorname{Emb}^1(Y,\mathcal U)\) is \(C^1\)-open, the
approximation may be chosen so that every \(F_t\) remains an
embedding. Moreover, the pullback-metric map is continuous from the
\(C^1\) topology to the \(C^0\) topology. Since
\[
    G_1^*\bar g-\gamma=h_0>0,
\]
the approximation may also be chosen so that
\(
    F_1^*\bar g>\gamma.
\)

\noindent\emph{Step 3: Isotopy extension.}
By the isotopy extension theorem
\cite[Chapter~8, Section~1, Theorem~1.3]{Hirsch}, the
isotopy \(F_t\) extends to an ambient diffeotopy
\[
    \Phi_t\colon N\longrightarrow N,
    \qquad
    \Phi_0=\operatorname{id}_N,
    \qquad
    \Phi_t\circ F_0=F_t.
\]
Define \(g=(\Phi_1|_M)^*\bar g.\) Then
\[
    \Ric^g
    =(\Phi_1|_M)^*\Ric^{\bar g}
    >K(\Phi_1|_M)^*\bar g
    =K g.
\]
On the boundary,
\[
    g|_{\partial M}
    =(\Phi_1\circ F_0)^*\bar g
    =F_1^*\bar g
    >\gamma.
\]
\end{proof}

We are now ready to prove Theorem \ref{Thm: Ric extension}.
\begin{proof}[Proof of Theorem \ref{Thm: Ric extension}]
By Lemma \ref{lem:large-boundary-metric}, we obtain a smooth metric $g_-$ on $M$ such that $\Ric^{g_-}>Kg_-$ and ${g_-}|_{\partial M}>\gamma$. Denote ${g_-}|_{\partial M}$ by $\gamma_0$. Let $\secf^-$ denote the second fundamental form of $\partial M$ in $(M,g_-)$ with respect to the inward unit normal.

Next, we construct the extension neck. Consider $N=\partial M\times [0,1]$ equipped with the base metric
$$g_{\rm{base}}=dt^2+\gamma(t),$$
where 
$$\gamma(t)=(1-t)\gamma_0+t\gamma.$$
Note that $\gamma(0)=\gamma_0$ and $\gamma(1)=\gamma$. Since $\gamma_0>\gamma$, $\gamma'(t)<0$ for all $t\in [0,1]$. By applying Lemma \ref{Lem:PRicciCcobordismlm}, we obtain a smooth metric $g_+$ on $N$ with $\Ric^{g_+}>Kg_+$ and satisfying the required boundary condition. Specifically, $g_+$ induces $\gamma_0$ on $\partial M\times\{0\}$ and $\gamma$ on $\partial M\times\{1\}$. In addition, the second fundamental form of $\partial M\times\{0\}$ in $(N,g_+)$ with respect to the inward unit normal, denoted by $\secf^+$, satisfies $\secf^++\secf^->0$.

Now, glue $M$ and $N$ together by identifying $\partial M\subset M$ with $\partial M\times\{0\}\subset N$. Denote the glued manifold $M\cup_{id} N$ by $\hat M$. On $\hat M$, consider the piecewise-defined metric $\hat g$, given by
\begin{equation*}
\hat g=\left\{
\begin{aligned}
&\,g_-\quad\mbox{on}\ M,\\
&\,g_+\quad\mbox{on}\ N.
\end{aligned}
\right.
\end{equation*}
Then $(\hat M,\hat g)$ is a Riemannian manifold with a corner along the gluing hypersurface. The second fundamental forms from the two sides are $\secf^+$ and $\secf^-$. Thus, $(\hat M,\hat g)$ satisfies the conditions of the \(K\)-lower-bound version of Theorem \ref{Lem: corner smoothing Ricci}, allowing us to construct a smooth metric $g$ on $\hat M$ with Ricci curvature $\Ric^g>Kg$ that agrees with $g_+$ near $\partial M\times\{1\}$. 

Since $\hat M$ is obtained by gluing a collar neighborhood of $\partial M$ to $M$, it is standard to construct a diffeomorphism 
\[
\Phi:(M,\partial M)\longrightarrow (\hat M,\partial M\times\{1\})
\]
such that the boundary map $\Phi|_{\partial M}$ agrees with the natural identification $\partial M\cong\partial M \times\{1\}$. It is straightforward to verify that the pullback metric $\Phi^* g$ satisfies all the required conditions.
\end{proof}

\subsection{Obstructions to stronger positive curvature conditions} In this subsection, we show that the metric extension problem for positive $k^{\mathrm{th}}$-intermediate Ricci curvature encounters local obstructions when $k\leq n-2$. 
\begin{theorem}[Restatement of Theorem \ref{Thm: obstruction Ric_k}]
Let $M$ be an $n$-dimensional compact manifold with boundary, and let $\sigma$ be a constant. Then the following hold:
\begin{itemize}[left=0.5cm]
\item[(i)]  For $n\geq 4$ and $k\leq n-3$, if $\gamma$ is a smooth metric on $\partial M$ with $\Ric_k^\gamma\leq \sigma$ at some point, then there is no smooth metric $g$ on any collar neighborhood of $\partial M$ such that $\Ric_k^g>\sigma$ and $g|_{\partial M}=\gamma$.
\item[(ii)] For $n\geq 3$, if $\gamma$ is a smooth metric on $\partial M$ with $\Ric^\gamma\leq \sigma$, and there exists a smooth metric $g$ on a collar neighborhood of $\partial M$ such that $\Ric_{n-2}^g>\sigma$ and $g|_{\partial M}=\gamma$, then the boundary tangent bundle $T(\partial M)$ splits as a direct sum of two nontrivial subbundles.
\end{itemize}
\end{theorem}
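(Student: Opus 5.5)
The plan is to reduce both parts to pointwise linear algebra at a boundary point, using the Gauss equation. Read in the same way as the definition of $\Ric_k^g<\sigma$, the hypothesis ``$\Ric_k^\gamma\le\sigma$ at some point $p$'' means $\Ric_k^\gamma(v_0;v_1,\dots,v_k)\le\sigma$ for \emph{every} $\gamma$-orthonormal set in $T_p\partial M$. This freedom in the frame is what the argument exploits.

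Suppose a collar metric $g$ with $g|_{\partial M}=\gamma$ and $\Ric_k^g>\sigma$ exists. Let $h$ be the second fundamental form of $\partial M$ at $p$, with respect to either unit normal; the sign of $h$ will not matter. For tangent orthonormal vectors the Gauss equation (as in \eqref{tt}) gives
\[
\Ric_k^g(v_0;v_1,\dots,v_k)=\Ric_k^\gamma(v_0;v_1,\dots,v_k)-\sum_{i=1}^k\Big(h(v_0,v_0)h(v_i,v_i)-h(v_0,v_i)^2\Big).
\]
Take $\{e_1,\dots,e_{n-1}\}$ to be an orthonormal eigenbasis of $h$ with eigenvalues $\mu_1,\dots,\mu_{n-1}$. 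For $v_0=e_a$ and $v_i=e_{b_i}$, the strict inequality $\Ric_k^g>\sigma$ combined with $\Ric_k^\gamma\le\sigma$ then forces
\[
\mu_a\sum_{i=1}^k\mu_{b_i}<0
\]
for every index $a$ and every $k$-subset $\{b_i\}$ of the remaining indices. More generally, for every unit $v_0$ and every orthonormal $v_1,\dots,v_k\perp v_0$ we get
\[
\sum_i\big(h(v_0,v_0)h(v_i,v_i)-h(v_0,v_i)^2\big)<0.
\]

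For (i), where $k\le n-3$, the remaining indices number $n-2\ge k+1$. The goal is to choose $a$ and two $k$-subsets that produce contradictory signs. In particular, each $\mu_a\neq0$, and for fixed $a$ all $k$-subsets of the other eigenvalues have sums of the same sign, namely opposite to $\mu_a$. I expect the main obstacle to be closing this sign bookkeeping in general. The first attempt is an averaging argument: averaging over $k$-subsets of the $n-2$ other indices shows $\mu_a\big(\sum_{b\ne a}\mu_b\big)<0$ for every $a$. Write $H=\sum_b\mu_b$; then $\mu_a(H-\mu_a)<0$ for all $a$. Taking $a$ with $|\mu_a|$ maximal and another index $c$ of the same sign (such a $c$ exists unless exactly one eigenvalue has its sign), the two conditions $\mu_a(H-\mu_a)<0$ and $\mu_c(H-\mu_c)<0$ should become incompatible once the slack $n-2\ge k+1$ is used. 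Here one chooses a $k$-subset avoiding $c$ and another containing $c$, and compares their sums. If that fails, the fallback is non-eigenvector frames: rotate $v_0$ in the plane of two eigenvectors of opposite sign so that $h(v_0,v_0)=0$. Then the extrinsic term equals $-\sum_i h(v_0,v_i)^2\le0$, and this contradicts the strict inequality. This rotation works whenever $h$ is indefinite. So the residual case is $h$ semidefinite, and that case is killed by the sign condition above, since then all the products $\mu_a\mu_b\ge0$.

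For (ii), take $k=n-2$, so the $v_i$ exhaust $v_0^\perp$ and the intrinsic term is $\Ric^\gamma(v_0,v_0)\le\sigma$. The argument above then says that the quadratic form
\[
Q(v)=h(v,v)\,H-|h(v,\cdot)|^2
\]
satisfies $Q(v)<0$ for every unit $v$. Note that $h$ is nondegenerate, since a null eigenvector would give $Q=0$. Moreover, by the rotation argument, $h$ cannot vanish on any unit vector, so $h$ is definite; but then $Q(e_a)=\mu_a(H-\mu_a)$ has the sign of $H$, because $H-\mu_a$ is a sum of $n-2\ge1$ eigenvalues of the same sign as $\mu_a$, and this is positive. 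So I expect the precise case analysis to show that $h$ must be indefinite with a controlled sign pattern, and this is the point to be settled carefully. Once it is, the conclusion follows by continuity over $\partial M$: if $Q<0$ holds at every point with $h$ nondegenerate and indefinite, then the positive and negative eigenspaces of $h$ have constant ranks $r$ and $n-1-r$, with $1\le r\le n-2$, and they are smooth subbundles. Hence $T(\partial M)=E_+\oplus E_-$ is a splitting into two nontrivial subbundles.
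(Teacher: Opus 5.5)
There is a genuine gap in (i), and the same faulty step also undermines your treatment of (ii).

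\textbf{The rotation fallback does not give a contradiction.} Once $h(v_0,v_0)=0$, the inequality you must contradict reads $-\sum_{i=1}^k h(v_0,v_i)^2<0$. Knowing that the left side is $\le 0$ is fully compatible with it. A contradiction appears only if you also choose $v_1,\dots,v_k\perp Sv_0$, for instance among the eigenvectors other than the two you rotated between. That choice is possible exactly because $k\le n-3$.

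Your version never uses $k\le n-3$, and that is the warning sign. Run with $k=n-2$, it would force $h$ to be definite, which fails already at the level of linear algebra. For $n=3$ with principal curvatures $1,-1$, one has $Q(v)=-|v|^2<0$ for every unit $v$.

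\textbf{The primary route does not close either.} The averaged condition $\mu_a(H-\mu_a)<0$ for all $a$ is satisfiable, e.g.\ by $(\mu_a)=(1,1,-3)$. So averaging discards exactly the information you need. The subsequent comparison of a subset containing $c$ with one avoiding $c$ is never actually carried out.

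\textbf{The missing idea for (i).} Fix the $k$-subset and vary $v_0$, rather than fixing $a$ and varying the subset. Order the eigenvalues as $\mu_1\ge\cdots\ge\mu_{n-1}$. Since $k+1\le n-2$, the index set $\{2,\dots,k+1\}$ avoids both $1$ and $n-1$. Put $s=\mu_2+\cdots+\mu_{k+1}$; then you get $\mu_1 s<0$ and $\mu_{n-1}s<0$. Hence $\mu_1$ and $\mu_{n-1}$ have the same sign, so by the ordering all $\mu_a$ share that sign, and so does $s$. This contradicts $\mu_1 s<0$. This is the paper's proof. Your rotation argument, repaired by taking $v_i\perp Sv_0$ and combined with your semidefinite case, would be a valid alternative.

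\textbf{Part (ii).} Delete the false claim that $h$ cannot vanish on a unit vector. What remains is exactly the paper's argument:
\begin{itemize}
\item $Q<0$ forces $h$ to be nondegenerate, since a null eigenvector gives $Q=0$.
\item $h$ cannot be definite, because then $Q(e_a)=\mu_a(H-\mu_a)>0$, using $n-2\ge 1$.
\item So $h$ is nondegenerate and indefinite everywhere. By continuity its signature is locally constant, and the positive and negative eigenbundles give the splitting $T(\partial M)=E_+\oplus E_-$.
\end{itemize}
The ``point to be settled carefully'' is therefore already settled by your own computation.
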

\begin{proof}[\it Proof of (i)] Let $p\in\partial M$ be the point where $\Ric^\gamma_k\leq \sigma$. Let $\{\lambda_i\}_{i=1}^{n-1}$ denote the principal curvatures of $\partial M$ at $p$ with respect to the inward normal, and let $\{v_i\}_{i=1}^{n-1}$ be the corresponding orthonormal eigenvectors. Without loss of generality, assume 
\begin{equation}\label{Eq: eigenvalue ordering}
\lambda_1\geq\lambda_2\geq\cdots \geq \lambda_{n-1}.
\end{equation}
By assumption, we have 
$$\Ric^\gamma_k(v_1;v_2,\ldots,v_{k+1})\leq \sigma,\quad\Ric_k^\gamma(v_{n-1};v_2,\ldots,v_{k+1})\leq \sigma,$$
and
$$\Ric^g_k(v_1;v_2,\ldots,v_{k+1})>\sigma,\quad\Ric_k^g(v_{n-1};v_2,\ldots,v_{k+1})>\sigma,$$
Subtracting these pairs and using the Gauss equation, we obtain
\begin{equation}\label{Eq: eigenvalue eq}
\lambda_1(\lambda_2+\cdots+\lambda_{k+1})<0\quad\mbox{and}\quad \lambda_{n-1}(\lambda_2+\cdots+\lambda_{k+1})<0.
\end{equation}
Thus, $\lambda_1$ and $\lambda_{n-1}$ must have the same sign. It follows from \eqref{Eq: eigenvalue ordering} that all $\lambda_i$ must have the same sign, which leads to a contradiction with \eqref{Eq: eigenvalue eq}.
\end{proof}
\begin{proof}[\it Proof of (ii)]
We continue to use $\{\lambda_i\}_{i=1}^{n-1}$ to denote the principal curvatures of $\partial M$ with respect to the inward normal, and $\{v_i\}_{i=1}^{n-1}$ to denote the corresponding orthonormal eigenvectors. By the Gauss equation and the assumption, for each $i$, we have the following inequality:
\begin{equation}\label{Eq: diff sign}
\lambda_i\sum_{j\neq i}\lambda_j=\Ric^\gamma(v_i,v_i)-\Ric^g_{n-2}(v_i; \hat v_i)<0,
\end{equation}
where $\hat v_i$ denotes the set $\{v_j\}_{j\neq i}^{n-1}$. This implies that all principal curvatures are nonzero. Since the principal curvatures are continuous functions on $\partial M$ and are nonzero, they must have definite signs. In addition, from inequality \eqref{Eq: diff sign}, we know that $\lambda_i$ cannot all have the same sign. 

Thus, the shape operator of $\partial M$, denoted by $S$, is an endomorphism of the tangent bundle $T(\partial M)$ with no zero eigenvalues. By transforming $S$ to meet the requirement of Lemma 2.8 in \cite{Hat17}, or by modifying the proof of Lemma 2.8 slightly, we conclude that $T(\partial M)$ has an $S$-invariant bundle decomposition: $$T(\partial M)=\xi_+\oplus \xi_-,$$ 
where $\xi_+$ and $\xi_-$ are nontrivial subbundles corresponding to positive and negative principal curvatures, respectively.
\end{proof}

As a corollary, we have:
\begin{corollary}[Restatement of Corollary \ref{Cor: even sphere}]
The following statements hold:
\begin{itemize}[left=0.5cm]
\item[(i)] For any compact manifold with boundary $M$ of dimension $n\geq 4$, there is a smooth metric $\gamma$ on $\partial M$ that cannot be extended to a smooth metric on $M$ with positive $(n-3)^{\rm th}$-intermediate Ricci curvature.
\item[(ii)] When $n\geq 5$ is odd, there is a smooth metric $\gamma$ on $\mathbb S^{n-1}$ that cannot be extended to a smooth metric on $\mathbb D^n$ with positive $(n-2)^{\rm th}$-intermediate Ricci curvature.
\end{itemize}
\end{corollary}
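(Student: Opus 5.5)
Both parts will follow from Theorem \ref{Thm: obstruction Ric_k}, once we exhibit boundary metrics that violate its curvature conditions. Any extension of \(\gamma\) to \(M\) with \(\Ric_k>0\) restricts to a metric on a collar of \(\partial M\) with the same property and boundary metric \(\gamma\). It therefore suffices to rule out collar extensions, and we apply the theorem with \(\sigma=0\).

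\emph{Part (i).} Set \(k=n-3\), so \(k\ge1\) because \(n\ge4\). The boundary \(\partial M\) has dimension \(n-1\ge3\geq k+1\). Pick a point \(p\in\partial M\) and a coordinate ball around it. On a small ball we define \(\gamma\) to be a flat metric, or a small piece of a hyperbolic metric if a strict inequality is wanted. We then extend this to all of \(\partial M\) by a partition of unity with any background metric, interpolating on an annulus around the ball and leaving \(\gamma\) untouched near \(p\). At \(p\) all sectional curvatures of \(\gamma\) vanish. Hence for any orthonormal \(\{v_0;v_1,\dots,v_k\}\subset T_p\partial M\), which exists since \(k+1\le n-1\), we get \(\Ric_k^\gamma(v_0;v_1,\ldots,v_k)=0\le 0\). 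Theorem \ref{Thm: obstruction Ric_k}(i) then says that no metric on a collar with \(\Ric_{n-3}>0\) restricts to \(\gamma\). This finishes part (i).

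\emph{Part (ii).} Let \(n\ge5\) be odd, so \(\partial\mathbb D^n=\mathbb S^{n-1}\) is an even-dimensional sphere of dimension \(n-1\ge4\). We need a metric \(\gamma\) on \(\mathbb S^{n-1}\) with \(\Ric^\gamma\le0\) everywhere. By Lohkamp's theorem \cite{Loh1994}, every closed manifold of dimension at least \(3\) admits a metric of negative Ricci curvature, so such a \(\gamma\) exists with \(\Ric^\gamma<0\). Suppose \(\gamma\) extended to \(\mathbb D^n\) with \(\Ric_{n-2}>0\). Theorem \ref{Thm: obstruction Ric_k}(ii) would then give a splitting \(T\mathbb S^{n-1}=\xi_+\oplus\xi_-\) into two nontrivial subbundles.

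The remaining step, and the main obstacle, is to exclude such a splitting on \(\mathbb S^{2m}\) with \(2m=n-1\). We use the Euler class. Writing \(a=\operatorname{rank}\xi_+\) and \(b=\operatorname{rank}\xi_-\), with \(a,b\ge1\) and \(a+b=2m\), the Whitney formula gives
\[
e(T\mathbb S^{2m})=e(\xi_+)\smile e(\xi_-),
\]
after passing to orientation double covers if needed. The case of non-orientable summands is handled via twisted coefficients, or by noting that \(\mathbb S^{2m}\) is simply connected, so every bundle over it is orientable.

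If \(a\) or \(b\) is odd, the corresponding Euler class is \(2\)-torsion and hence zero in torsion-free cohomology. If both are even with \(0<a<2m\), then \(e(\xi_+)\in H^{a}(\mathbb S^{2m};\mathbb Z)=0\). In every case \(e(T\mathbb S^{2m})=0\). But \(\langle e(T\mathbb S^{2m}),[\mathbb S^{2m}]\rangle=\chi(\mathbb S^{2m})=2\), a contradiction. Hence \(T\mathbb S^{n-1}\) admits no such splitting, and \(\gamma\) does not extend.
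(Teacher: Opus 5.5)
Your proposal is correct and follows the paper's proof. Part (i) applies Theorem \ref{Thm: obstruction Ric_k}(i) with $\sigma=0$ to a boundary metric that has nonpositive curvature at a point; you use a flat patch and the paper a hyperbolic one, and both satisfy the non-strict hypothesis. Part (ii) combines Lohkamp's negative Ricci metric with the Euler-class obstruction to splitting $T\mathbb S^{n-1}$. Your parity case split is unnecessary: $H^a(\mathbb S^{2m};\mathbb Z)=0$ for every $1\le a\le 2m-1$, so $e(\xi_\pm)=0$ regardless of parity, exactly as in the paper.
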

\begin{proof}
For (i), it suffices to construct a smooth metric $\gamma$ on $\partial M$ with negative sectional curvature at a point. This can be achieved by first constructing a hyperbolic metric $\gamma_1$ on a small neighborhood around this point and then extending $\gamma_1$ to a smooth metric $\gamma$ on the entire $\partial M$. 

For (ii), we need a smooth metric $\gamma$ on $\mathbb S^{n-1}$ with negative Ricci curvature. This is guaranteed by a general result due to Lohkamp \cite{Loh1994}, which states that any closed manifold of dimension at least $3$ admits a smooth metric with negative Ricci curvature. It remains to show that the tangent bundle of $\mathbb S^{n-1}$ is irreducible for odd $n$, which can be done by computing its Euler class. Since \(n\) is odd, \(e(T\mathbb S^{n-1})\neq0\). Suppose, by contradiction, that the tangent bundle of $\mathbb S^{n-1}$ is reducible. Then we have a non-trivial decomposition
$$T\mathbb S^{n-1}=\xi_1\oplus \xi_2,$$
where \(\xi_i\) is an \(n_i\)-plane bundle over \(\mathbb S^{n-1}\), with \(n_i\geq1\) and \(n_1+n_2=n-1\). Due to the facts $n_i\in\{1,\ldots, n-2\}$ and $H^k(\mathbb S^{n-1};\mathbb Z)=0$ for all $1\leq k\leq n-2$, we conclude 
$$e(\xi_i)=0 \in H^{n_i}(\mathbb S^{n-1};\mathbb Z).$$ Therefore we obtain 
$$e(T\mathbb S^{n-1})=e(\xi_1)\smile e(\xi_2)=0,$$ which leads to a contradiction.
\end{proof}

\section{Extension and obstruction results for negative curvature conditions}\label{Sec: 5}
\subsection{A global obstruction} In this subsection, we show that the metric extension problem for negative sectional curvature is not always solvable due to a global obstruction, in contrast to Theorem \ref{Thm: obstruction Ric_k} and Lemma \ref{Lem: NSecCcobordismlm}.

\begin{proposition}[Restatement of Proposition \ref{Prop: global obstruction}]\label{Prop: global obstruction'}
Let \(M=(\mathbb S^1\times\mathbb D^2)\#(\mathbb S^2\times\mathbb S^1)\),
where the connected sum is taken in the interior. Then no flat metric on
\(\partial M\cong\mathbb T^2\) can be extended to a smooth metric on \(M\)
with nonpositive sectional curvature.
\end{proposition}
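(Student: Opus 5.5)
The plan is to argue by contradiction, using synthetic nonpositive curvature together with a Cartan--Hadamard argument, so that the topology of $M$ rules out the extension. Suppose $g$ is a smooth metric on $M$ with $\sec^g\le 0$ and $g|_{\partial M}$ flat. The key input is the Alexander--Berg--Bishop characterization \cite{ABB} of when a Riemannian manifold \emph{with boundary} is locally $\mathrm{CAT}(K)$ for its induced length metric. Kosovski\u{\i}'s Theorem \ref{Kos2} is the corner analogue of this criterion. The conditions are:
\begin{itemize}[left=0.5cm]
\item the interior has $\sec\le K$;
\item the intrinsic sectional curvature of $\partial M$ is at most $K$ on those tangent $2$-planes on which the second fundamental form (with respect to the inward normal) is negative definite, i.e. in the ``concave'' directions, where shortest paths can wrap along the boundary.
\end{itemize}
Here $K=0$, and the interior condition holds by assumption. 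The boundary condition holds trivially, because $\partial M$ is a flat torus and so its intrinsic curvature vanishes on every plane, whatever the sign of $\secf$. No sign information on $\secf$ is needed. This is fortunate: the Gauss equation only gives $\det\secf=-\sec^g(T\partial M)\ge 0$, which allows $\secf$ to be semidefinite of varying sign across the torus, so a direct gluing argument would be awkward.

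Hence $(M,d_g)$ is a compact, hence complete, locally $\mathrm{CAT}(0)$ geodesic space. Next I apply the metric Cartan--Hadamard theorem (Alexander--Bishop; see also Bridson--Haefliger). The universal cover $\widetilde M$ with the lifted length metric is complete and locally $\mathrm{CAT}(0)$, hence globally $\mathrm{CAT}(0)$, hence contractible. In particular $\pi_2(M)=\pi_2(\widetilde M)=0$.

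The remaining step is topological: I show $\pi_2(M)\neq 0$. The sphere $S=\mathbb S^2\times\{pt\}$ in the $\mathbb S^2\times\mathbb S^1$ summand is nonseparating in $M$. There is a loop meeting $S$ transversally in exactly one point, so the mod $2$ intersection number of $S$ with this loop is $1$. Consequently $[S]\neq0$ in $H_2(M;\mathbb Z/2)$. Lifting to the universal cover, $S$ lifts to an embedded sphere in $\widetilde M$ which is still nonseparating. Equivalently, $M$ is not irreducible and its sphere $S$ is essential by the sphere theorem / Kneser--Milnor prime decomposition, since $\mathbb S^1\times\mathbb D^2$ and $\mathbb S^2\times\mathbb S^1$ are nontrivial prime summands. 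Either way the lift of $S$ gives a nonzero element of $H_2(\widetilde M)\cong\pi_2(M)$ by Hurewicz, which contradicts the contractibility of $\widetilde M$. (One also sees from this that $\widetilde M$ is not contractible directly.)

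I expect the main obstacle to be the precise application of the boundary criterion: checking the exact sign conventions in \cite{ABB}, and confirming that their theorem applies to smooth compact manifolds with smooth boundary in the locally $\mathrm{CAT}(0)$ form needed here. If citing it is problematic, my fallback is to reduce to Theorem \ref{Kos2} in two steps. First, glue a flat half-cylinder $\partial M\times[0,\infty)$ along $\partial M$, which produces a corner. In Theorem \ref{Kos2} this half-cylinder side has $\secf^+=0$, so condition (iii) is vacuous, and condition (ii) reduces to requiring the boundary second fundamental form to be nonpositive. That sign condition is not available in general, so this reduction is exactly where the boundary-version theorem is genuinely needed.
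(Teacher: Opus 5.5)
Your proposal is correct and is essentially the paper's proof: Alexander--Berg--Bishop gives local $\mathrm{CAT}(0)$ (the flat boundary makes the outward-curvature condition automatic), the generalized Cartan--Hadamard theorem makes $\widetilde M$ contractible so $\pi_2(M)=0$, and the $\mathbb S^2\times\mathbb S^1$ summand contradicts this. One slip: a lift of $S$ to the simply connected $\widetilde M$ is \emph{separating}, not nonseparating, since every closed two-sided embedded hypersurface in a simply connected manifold separates. You do not need that step anyway, because your own observation that $[S]\neq0$ in $H_2(M;\mathbb Z/2)$ already shows that $S$ is not null-homotopic, hence $\pi_2(M)\neq0$.
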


To prove Proposition \ref{Prop: global obstruction'}, we use two classical results. The first characterizes upper curvature bounds in the Alexandrov sense for Riemannian manifolds with boundary. 

\begin{theorem} [Alexander--Berg--Bishop \cite{ABB}]\label{ACBoundary}
Let $(M,g)$ be a Riemannian manifold with boundary, and let \(K\in\mathbb R\). Then the following conditions are equivalent:
\begin{itemize}[left=0.5cm]
    \item [(i)] \((M,g)\) is locally \(\mathrm{CAT}(K)\).
    \item [(ii)] The sectional curvatures of $M$ and the outward sectional curvatures of $\partial M$ do not exceed $K$. Here, outward sectional curvatures of \(\partial M\) are its intrinsic sectional curvatures on two-planes on which the second fundamental form is negative definite.
\end{itemize}
\end{theorem}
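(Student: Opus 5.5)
The plan is to follow Alexander--Berg--Bishop: both directions are local, and the characterization of locally $\mathrm{CAT}(K)$ through $K$-convexity of distance functions along geodesics is used throughout. At interior points of $M$ the equivalence between $\sec\le K$ and locally $\mathrm{CAT}(K)$ is the classical Cartan--Alexandrov--Toponogov comparison. So everything reduces to a small neighborhood $V$ of a boundary point $p\in\partial M$.

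\emph{(ii)$\Rightarrow$(i).}
\begin{enumerate}
\item Recall that shortest paths in a Riemannian manifold with boundary are $C^1$. They consist of interior geodesic arcs together with contact arcs lying in $\partial M$. On a contact arc the acceleration in $\partial M$ must point outward, so the curve is a curve in $\partial M$ with $\secf(\dot c,\dot c)\le 0$ with respect to the inward normal.
\item Fix a point $q$ and a geodesic $c$ of $M$ in $V$. Set $f=\mathrm{md}_K(d(q,c(s)))$, where $\mathrm{md}_K$ is the modified distance function of the model surface. Show $f''+Kf\le 1$ in the barrier sense.
\item To prove this, study the first and second variation of length for variations of the shortest path from $q$ to $c(s)$. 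This amounts to a Jacobi/Riccati comparison along a piecewise (interior/contact) geodesic.
\begin{itemize}
\item On interior arcs the comparison uses $\sec\le K$.
\item On contact arcs the relevant $2$-plane is spanned by the velocity and the variation field, both tangent to $\partial M$. The Gauss equation together with the sign of $\secf$ on contact arcs shows that the curvature entering the index form is the intrinsic curvature of an outward plane. Hypothesis (ii) bounds this by $K$.
\end{itemize}
\item Finally, $K$-convexity of all such $f$ in a neighborhood of radius below $\pi/(2\sqrt K)$ (or any radius if $K\le0$) gives the local $\mathrm{CAT}(K)$ condition by the standard argument.
\end{enumerate}

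\emph{(i)$\Rightarrow$(ii).} Interior sectional curvatures are handled as before. Suppose a plane $\Pi\subset T_p\partial M$ has $\secf|_\Pi<0$ but $\sec^{\partial M}(\Pi)>K$.
\begin{enumerate}
\item Take the surface $S=\exp^{\partial M}_p(\Pi\cap B_r)$ inside $\partial M$.
\item Since $\partial M$ is strictly concave in the directions of $\Pi$, show that for small $r$ the $M$-shortest paths between points of $S$ are contained in $\partial M$ and coincide, to second order, with $\partial M$-geodesics tangent to $S$.
\item A small geodesic triangle on $S$ then has, by the usual second-order expansion of angles (Gauss--Bonnet), angle excess of order $(\sec^{\partial M}(\Pi)-K)\cdot\mathrm{area}>0$. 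This contradicts the $\mathrm{CAT}(K)$ angle comparison.
\end{enumerate}

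The main obstacle is step 3 of the first direction. A shortest path may switch between the interior and the boundary infinitely often, with contact sets that are Cantor-like. The Jacobi/Riccati comparison must then be carried out for merely $C^1$ curves with $C^{1,1}$ regularity. I would first treat finitely many switches, where the Riccati solution is continuous at the junctions because the path is $C^1$. Then I would pass to the general case by approximating $\partial M$ from inside by smooth hypersurfaces that are slightly perturbed but still satisfy (ii) up to $\varepsilon$, and let $\varepsilon\to0$ using stability of the $K$-convexity inequality under uniform convergence.
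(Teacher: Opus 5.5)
The paper does not prove this statement. It quotes it from Alexander--Berg--Bishop \cite{ABB}, and its remark on Kosovski\u{\i} indicates that their argument goes through $K$-convexity of Jacobi fields along geodesics of $M$. Your direction (i)$\Rightarrow$(ii) is sound. For the height $h$ above $\partial M$ one has $\nabla^2 h(v,v)=-\secf(v,v)>0$ for $v$ near $\Pi$. So short shortest paths in those directions cannot have interior arcs, hence are exactly $\partial M$-geodesics, and the angle-excess expansion contradicts angle comparison. The gap is in (ii)$\Rightarrow$(i).

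\textbf{The main gap: your mechanism gives the wrong inequality.} Locally $\mathrm{CAT}(K)$ corresponds to $f''+Kf\ge 1$: distance functions are \emph{more} convex than in the model. The inequality $f''+Kf\le 1$ you wrote is the curvature $\ge K$ condition, and your Step 4 (``$K$-convexity'') needs $\ge$. This is not a harmless typo, because Step 3 can only produce $\le$:
\begin{itemize}
\item Any variation $\sigma_s$ of the shortest path from $q$ to $c(s_0)$ consists of competitors. So $\mathrm{md}_K(L(\sigma_s))\ge f(s)$ with equality at $s_0$. These are upper support functions, and second variation then bounds $f''$ only from above.
\item Even without boundary, the standard comparison proof works only because $d_q$ is smooth. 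There $f''$ \emph{equals} the index form of the Jacobi field of the actual family of geodesics from $q$. Variations alone give $f''\le I(V,V)$, which cannot be combined with the Riccati lower bound on $I(J,J)$.
\item To get $f''+Kf\ge 1$ you need lower support functions, i.e.\ a lower bound for $d(q,c(s))$ against all competitors. That amounts to showing that the shortest paths from $q$ to $c(s)$ themselves form a family whose length can be differentiated (in a barrier sense from below) and whose variation field obeys the Riccati comparison.
\item In a manifold with boundary this family is in general not differentiable in $s$, even with finitely many switches. Contact arcs appear, disappear and move as $c(s)$ crosses shadow boundaries, and $d_q$ is not $C^2$ there.
\end{itemize}
Handling this family is the substantive part of \cite{ABB} (the $K$-convexity of Jacobi fields along $M$-geodesics). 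Your proposal does not supply it.

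\textbf{Further corrections to Step 3,} even granting such a family:
\begin{itemize}
\item The Riccati solution is not continuous at junctions. Where a path leaves $\partial M$ tangentially, nearby paths leave at nearby points and the wavefront has a cusp on $\partial M$, so the $\nu\nu$-component starts at $+\infty$. At an arrival point the $\nu$-direction is simply discarded. Both jumps go in the favorable direction, but this has to be argued.
\item On contact arcs the plane $\Span\{\dot c,J\}$ need not be outward. What is true is that $\secf(\dot c,\dot c)\le 0$ forces one of two cases. Either $\secf$ is negative definite on the plane, and (ii) applies. Or $\det(\secf|_{\Span\{\dot c,J\}})\le 0$, and then the Gauss equation gives $\sec^{\partial M}\le\sec^{M}\le K$.
\item Replacing $\partial M$ by a nearby smooth hypersurface does not remove geodesics with infinitely many switch points. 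So your approximation step does not reduce to the finite-switch case. You would need to name a class of boundaries for which finiteness is actually known, and keep the size of the $\mathrm{CAT}(K)$ neighborhoods uniform in the limit.
\end{itemize}
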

When $K=0$, the above theorem was stated in \cite{Gro1978}. The second result is the generalized Cartan--Hadamard theorem, originating in Gromov’s work \cite{Gro1987}. The version stated below follows from \cite[Theorem II.4.1(2) and Corollary II.1.5]{BridsonHaefliger}.

\begin{theorem}[Generalized Cartan--Hadamard theorem] \label{gcartan-hadamard}
Let $X$ be a complete, connected length space that is locally \(\operatorname{CAT}(0)\). Then the universal cover \(\widetilde X\), equipped with the induced length metric, is a \(\operatorname{CAT}(0)\) space. Consequently, \(\widetilde X\) is contractible.  
\end{theorem}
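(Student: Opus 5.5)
We plan to follow the classical Alexander--Bishop argument, in the form presented in Bridson--Haefliger, Chapter~II.4. Fix a basepoint \(x_0\in X\). Let \(\mathcal G\) be the set of local geodesics \(c\colon[0,1]\to X\) with \(c(0)=x_0\), parametrized proportionally to arc length; constant paths are included. Equip \(\mathcal G\) with the uniform metric \(d(c,c')=\sup_t d(c(t),c'(t))\). We then want to show three things: (a) the endpoint map \(p\colon\mathcal G\to X\), \(c\mapsto c(1)\), is a covering map; (b) \(\mathcal G\) is contractible; (c) the length metric on \(\mathcal G\) pulled back from \(X\) is \(\mathrm{CAT}(0)\). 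Together these identify \(\mathcal G\) with \(\widetilde X\) and give the conclusion.

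The basic local input is the following. Around each point of \(X\) there is a ball that is \(\mathrm{CAT}(0)\) in its induced metric. In such a ball geodesics are unique, depend continuously on their endpoints, and satisfy the convexity of \(t\mapsto d(c(t),c'(t))\) for any two geodesics \(c,c'\). We would first prove a \emph{local continuation lemma}. Let \(c\) be a local geodesic, and cover its image by finitely many such \(\mathrm{CAT}(0)\) balls (possible by compactness). Then for every \(y\) close to \(c(1)\) there is a unique local geodesic \(c_y\) from \(x_0\) to \(y\) that is uniformly close to \(c\). Moreover, \(d(c_y,c)\le d(y,c(1))\). We plan to obtain \(c_y\) piece by piece, subdividing \(c\) at the ball centres and readjusting the junction points successively, and to get the estimate from the convexity of the distance function on each small piece. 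This gives local homeomorphisms of \(\mathcal G\) onto small balls of \(X\), so \(p\) is a local isometry with the path-lifting property.

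To show that \(p\) is a covering we need completeness. A path in \(X\) starting at \(c(1)\) must lift for all times. The set of times up to which it lifts is open by the continuation lemma. It is closed because the lifted local geodesics form a Cauchy family in the uniform metric, and completeness of \(X\) lets us pass to the limit. Completeness also guarantees that the limit is again a local geodesic, since local geodesicity is a closed condition inside the compact union of \(\mathrm{CAT}(0)\) balls. For contractibility, consider the homotopy \(H_s(c)=c|_{[0,s]}\), reparametrized to \([0,1]\). It is continuous in the uniform metric and joins \(\mathrm{id}_{\mathcal G}\) to the constant path. Hence \(\mathcal G\) is simply connected, and so it is the universal cover \(\widetilde X\) with the induced length metric.

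Finally, we need to show that \(\widetilde X\) is globally \(\mathrm{CAT}(0)\). In \(\widetilde X\), any two points are joined by a unique local geodesic, namely the lift of the corresponding element of \(\mathcal G\) based at the first point. This local geodesic depends continuously on its endpoints. Given a geodesic triangle, we would subdivide it by the geodesics from one vertex to points of the opposite side, and then refine further so that every small triangle lies in a \(\mathrm{CAT}(0)\) ball. Applying Alexandrov's gluing lemma repeatedly shows that the whole triangle satisfies the \(\mathrm{CAT}(0)\) comparison. The same argument shows that local geodesics are in fact geodesics. Once \(\widetilde X\) is \(\mathrm{CAT}(0)\), it is contractible via the geodesic contraction to a point, which is continuous because geodesics vary continuously with their endpoints. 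We expect the main obstacle to be the local continuation lemma together with its uniform estimate. This is the step where local \(\mathrm{CAT}(0)\) convexity has to be converted into the uniqueness and continuity of local geodesics along an entire path, and it is what drives both the covering property and the patching argument.
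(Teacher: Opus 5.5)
Your outline is correct and follows essentially the same route the paper relies on: the paper gives no proof of its own but cites Bridson--Haefliger (Theorem II.4.1(2) and Corollary II.1.5), and the Alexander--Bishop argument you sketch is that proof — the space of local geodesics from a basepoint, the endpoint map as a covering, contraction by truncating local geodesics, Alexandrov patchwork for global $\mathrm{CAT}(0)$, and the geodesic contraction for contractibility. One step should be tightened: in the closedness part of path lifting, the Cauchy property of the lifts is justified only for rectifiable paths, where the uniform distance between lifts is bounded by the length of the corresponding arc because $p$ is a local isometry; lifting geodesic segments inside $\mathrm{CAT}(0)$ balls, together with the standard compactness argument, already shows that $p$ is a covering.
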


\begin{proof}[Proof of Proposition \ref{Prop: global obstruction'}]
Suppose, to the contrary, that a flat metric $\gamma$ on $\partial M$ extends to a smooth metric $g$ on $M$ with nonpositive sectional curvature. Since $\gamma$ is flat, Theorem \ref{ACBoundary} shows that the intrinsic
length space $(M,d_g)$ is locally CAT$(0)$. The generalized
Cartan--Hadamard theorem therefore implies that the universal cover
$\widetilde M$ is a CAT$(0)$ space and hence contractible. It follows that
\(
\pi_2(M)\cong\pi_2(\widetilde M)=0.
\)
However, the $\mathbb S^2\times\mathbb S^1$ summand produces
a nontrivial element of $\pi_2(M)$, a contradiction. 
\end{proof}

\subsection{Solvability in $\mathcal C_n$ and $\mathcal C'_n$}
Despite the preceding obstruction, the metric extension problems for negative sectional curvature and negative curvature operator are always solvable for manifolds in $\mathcal C_n$ and $\mathcal C'_n$, respectively.

\begin{theorem}[Restatement of Theorem \ref{Thm:main 2}]\label{Thm: main 2'}
Let $M\in \mathcal C_n$ (respectively, $\mathcal C'_n$). Then any smooth metric $\gamma$ on $\partial M$ can be extended to a smooth metric on $M$ with negative sectional curvature (respectively, negative curvature operator) and strictly convex umbilical boundary.
\end{theorem}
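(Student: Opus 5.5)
The plan is a boundary replacement, parallel to the proof of Theorem \ref{Thm: Ric extension}. We glue a neck from Lemma \ref{Lem: NSecCcobordismlm} onto a suitably prepared initial metric and smooth the corner with Theorem \ref{thm: smoothing sec}, checking hypothesis (iii) through Proposition \ref{practical_use}. I describe the sectional-curvature case; the curvature-operator case for $\mathcal C'_n$ is identical, using the operator versions of the lemma and the smoothing theorem.

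\emph{Step 1: preparing the initial metric.} Let $g_{\rm ini}$ be a metric on $M$ with $\sec<0$ and boundary index $\le 1$. The index bound alone is not enough, because zero principal curvatures would break $2$-convexity. So I first move the boundary slightly inward along the inward normal geodesics, replacing $M$ by a diffeomorphic domain bounded by an equidistant hypersurface. The shape operator satisfies the Riccati equation $S'=-S^2-R_\nu$ with $-R_\nu>0$. For small distance $s$, the eigenvalues of $S$ that vanish become positive at rate $\approx -R_\nu s$, and eigenvalues bounded away from zero keep their sign. Hence the new boundary has a nondegenerate second fundamental form $\secf^-$ with at most one negative eigenvalue. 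By compactness, all eigenvalues $\kappa$ of $\secf^-$ with respect to $\gamma_{\rm ini}:=g_{\rm ini}|_{\partial M}$ satisfy $\delta\le|\kappa|\le C$ for some $\delta>0$. The part of this step I am least sure of is that the eigenvalue bookkeeping survives uniformly, given that the index may jump between $0$ and $1$. Next I rescale $g_{\rm ini}$ by a small constant $c$. This keeps $\sec<0$ (after rescaling, $\sec\le -K_0$ for some $K_0>0$), keeps the index and nondegeneracy, and makes $\gamma_0:=c\gamma_{\rm ini}<\gamma$. Call the result $g_-$, and keep writing $\secf^-$ for its boundary second fundamental form with respect to the inward normal.

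\emph{Step 2: choosing the shape of the neck at $t=0$.} Define $B:=f(\secf^-)$ by smooth functional calculus, relative to $\gamma_0$, with $f(\kappa)=e^{-A\kappa}$. Then $B>0$ and $B$ is smooth. The eigenvalues of $\secf^-$ relative to $B$ are $\mu=\kappa e^{A\kappa}$. A negative eigenvalue gives $|\mu_1|\le Ce^{-A\delta}$, and positive ones give $\mu_j\ge\delta$. Taking $A$ large yields $\mu_1+\mu_j\ge 0$, so $\secf^-$ is $2$-convex with respect to $B$. I then pick a smooth family $\gamma_t$ with $\gamma_t'>0$, $\gamma_t|_{t=0}=\gamma_0$, $\gamma_0'=B$, $\gamma_1=\gamma$, and $\gamma'_t=2\gamma_t$ near $t=1$. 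For instance, take $\gamma_t=\gamma_0+\phi(t)B+\psi(t)(\gamma-\gamma_0)$ on most of $[0,1]$ with monotone $\phi,\psi$. Near $t=1$, I instead use the exponential profile $e^{2(t-1)}\gamma$, which requires some care in matching the pieces. Lemma \ref{Lem: NSecCcobordismlm} is stated for a $u$ chosen by the lemma; I will need to inspect its proof to also arrange $u$ constant near $t=1$. Its conditions $u\le C_1$ and $u'\le -C_2u$ are only needed there in a weakened form, since the Gauss term already dominates. Granting that, we obtain a neck metric $g_+=u^2dt^2+\gamma_t$ with $\sec<0$ and $\secf^+=-B/(2u(0))$. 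At $t=1$ the boundary is then umbilical and strictly convex: $\secf=\gamma_1'/(2u)=\gamma/u>0$.

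\emph{Step 3: gluing and smoothing.} Glue $(M,g_-)$ to the neck along $\partial M\cong\Sigma\times\{0\}$. Taking $u(0)$ small makes $\secf^++\secf^-<0$, and $\secf^+<0$ holds automatically. Since $-\secf^+$ is a positive multiple of $B$, $\secf^-$ is $2$-convex with respect to $-\secf^+$. Proposition \ref{practical_use} then gives hypothesis (iii) of Theorem \ref{thm: smoothing sec}. Applying that theorem with bound $-\min(K_0,K)$ and $\varepsilon$ smaller than this margin produces a smooth metric with negative curvature that agrees with $g_+$ near $t=1$. Finally, a diffeomorphism $(M,\partial M)\to(M\cup N,\Sigma\times\{1\})$ pulls it back to the desired extension of $\gamma$. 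I expect the main obstacle to be Step 1: guaranteeing that $\secf^-$ is nondegenerate with a uniform spectral gap. This is exactly what makes the index-$\le1$ hypothesis yield the $2$-convexity needed for the smoothing theorem.
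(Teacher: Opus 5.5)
Your Step 1 pushes the boundary the wrong way, and this is a genuine gap. Use the paper's conventions. Near $\partial M$ write $g=ds^2+\eta_s$, where $s$ is the distance to $\partial M$. Then $\partial_s$ is the inward normal of $\{s\ge s_0\}$ and $\secf_s=-\frac12\eta_s'$. Formula \eqref{tn} with $u\equiv1$ gives
\[
\secf_s'=R(\partial_s,\cdot,\cdot,\partial_s)-\secf_s^2<0
\]
when $\sec<0$. So inward parallel hypersurfaces are \emph{less} convex: principal curvatures that vanish (with respect to the inward normal) become negative, not positive. Your equation $S'=-S^2-R_\nu$ is correct for $S=\nabla\nu$, but its bilinear form is $-\secf^-$, not $\secf^-$.

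For example, in $\mathbb H^n$ the hypersurface at distance $s$ from a totally geodesic hyperplane has $\secf=-\tanh(s)\,\eta_s$ with respect to the normal pointing away from the hyperplane. Consider a point where $\secf$ has eigenvalues like $(-1,0,\dots)$, or vanishes on a $2$-plane; both are allowed in $\mathcal C_n$. There your new boundary has index $\ge2$, so $\secf^-$ is negative definite on some $2$-plane. Then no metric $B$ makes it $2$-convex, and hypothesis (iii) of Theorem \ref{thm: smoothing sec} cannot be met.

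The fix is the paper's. Extend the metric slightly past $\partial M$ (negative curvature is an open condition) and push \emph{outward}. Then $\secf$ strictly increases, and the pushed boundary has positive index at least $n-2$.

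Your doubt about a uniform spectral gap is well founded. Suppose the smallest principal curvature is positive somewhere and negative elsewhere on a connected boundary component. The same holds after any small push, so it must vanish somewhere, and nondegeneracy cannot be arranged in general.

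Nondegeneracy is not needed, though. After the outward push, compactness gives $\kappa_2\ge\delta>0$ for the second smallest eigenvalue, so only $\kappa_1$ can be nonpositive. Moreover $\kappa e^{A\kappa}\ge-1/(eA)$ for all $\kappa$, and $\kappa e^{A\kappa}\ge\kappa$ for $\kappa\ge0$. Hence any two eigenvalues of $\secf^-$ relative to $B$ sum to at least $\delta-1/(eA)>0$ once $A$ is large. With these corrections, $B=e^{-A\secf^-}$ is a valid and slicker substitute for Lemma \ref{lem:two-convex-metric}, which builds $B$ pointwise via Ky Fan's principle and patches with a partition of unity.

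Finally, you do not need $u$ constant near $t=1$. If $\gamma_1'\propto\gamma_1$, then $\secf_1=\gamma_1'/(2u(1))$ is umbilical for any value of $u(1)$. So $u=C_1e^{-C_2t}$ from the proof of Lemma \ref{Lem: NSecCcobordismlm} works for any family with $\gamma_t'>0$. The paper avoids your matching step altogether by attaching a second neck with $\gamma^{(2)}_t=\frac{1+t}{2}\gamma$.
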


The index-at-most-one condition in the definitions of $\mathcal C_n$ and $\mathcal C'_n$ does not directly give the $2$-convexity needed for gluing. After moving the boundary slightly outward, the following algebraic lemma produces a positive-definite tensor $B$ with respect to which the resulting boundary second fundamental form is strictly $2$-convex. We then prescribe the second fundamental form on the neck side to be a sufficiently large negative multiple of $B$.

\begin{lemma}\label{lem:two-convex-metric}
Let $\Sigma$ be a closed $m$-manifold, $m\geq 2$, and let $A$ be a smooth symmetric $2$-tensor on $\Sigma$. Suppose that $A$ has positive index of inertia at least $m-1$ at every point. Then there exists a smooth Riemannian metric $B$ on $\Sigma$ such that $A$ is strictly $2$-convex with respect to $B$.
\end{lemma}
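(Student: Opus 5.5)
The approach is to build $B$ by functional calculus applied to the shape endomorphism of $A$ with respect to an auxiliary metric. The aim is to rescale the eigendirections of $A$ so that the single possibly nonpositive eigenvalue has relative size strictly smaller in magnitude than every positive one.

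\emph{Setup.} Fix any smooth Riemannian metric $h$ on $\Sigma$, and let $S$ be the $h$-self-adjoint endomorphism with $A(x,y)=h(Sx,y)$. Order its eigenvalues as $a_1\le a_2\le\cdots\le a_m$. These are continuous functions on $\Sigma$. The hypothesis that $A$ has positive index of inertia at least $m-1$ means exactly that $a_2>0$ everywhere. By compactness of $\Sigma$ there are constants $\delta>0$ and $\Lambda>0$ with
\[
a_2\ge 2\delta,\qquad |a_i|\le\Lambda \quad\text{for all } i,
\]
everywhere on $\Sigma$.

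\emph{Construction.} Choose a smooth function $f:\mathbb R\to(0,\infty)$ such that:
\begin{itemize}[left=0.5cm]
\item $f(a)=a$ for $a\ge 2\delta$;
\item $f(a)\ge a$ for all $a$;
\item $f(a)>2|a|$ for $a\le 0$.
\end{itemize}
For instance, take $f(a)=2\sqrt{a^2+\delta^2}$ for $a\le\delta$, and interpolate monotonically on $[\delta,2\delta]$ while staying above $a$ and positive. Define
\[
B(x,y):=h\bigl(f(S)x,y\bigr).
\]
Since $f$ is smooth and $S$ is a smooth self-adjoint endomorphism field, $f(S)$ is a smooth self-adjoint tensor; this is standard holomorphic/smooth functional calculus, for example via a contour-integral representation or via $f$ restricted to a compact neighbourhood of $[-\Lambda,\Lambda]$. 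Moreover $f(S)$ is positive definite, so $B$ is a smooth Riemannian metric.

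\emph{Eigenvalues of $A$ relative to $B$.} Because $f(S)$ commutes with $S$, the $h$-eigenvectors of $S$ are simultaneously $B$-orthogonal. The eigenvalues of $A$ relative to $B$ are therefore $\mu_i=a_i/f(a_i)$, and from the properties of $f$:
\begin{itemize}[left=0.5cm]
\item for $i\ge2$ we have $a_i\ge2\delta$, hence $\mu_i=1$;
\item if $a_1\le0$, then $\mu_1\ge-\tfrac12$;
\item if $a_1>0$, then $\mu_1>0$.
\end{itemize}
In all cases the two smallest relative eigenvalues satisfy $\mu_1+\mu_2\ge\tfrac12>0$.

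\emph{Conclusion.} Let $\{e_1,e_2\}$ be any $B$-orthonormal pair. By the Ky Fan minimum principle, $A(e_1,e_1)+A(e_2,e_2)\ge\mu_1+\mu_2>0$. This is strict $2$-convexity of $A$ with respect to $B$.

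The only step needing care is the smoothness of $B$, since the individual eigenvalues $a_i$ are merely continuous. Working with $f(S)$ rather than with eigenvalue-by-eigenvalue rescaling avoids this issue, because $f(S)$ is smooth even where eigenvalues cross.
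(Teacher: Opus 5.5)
Your proof is correct in substance but takes a different route from the paper. The paper argues locally. At each point it diagonalizes $A_p=\diag(a_1,\dots,a_m)$ with $a_2>0$ and takes $B_p=\diag(L,1,\dots,1)$ with $L$ so large that $a_2>L^{-1}|a_1|$. It then extends $B_p$ to a neighbourhood using openness of strict $2$-convexity, and glues the local metrics with a partition of unity. The gluing works because, for $P=\Span\{X,Y\}$, the number $\tr_{B|_P}(A|_P)$ has the sign of $A(X,X)B(Y,Y)+A(Y,Y)B(X,X)-2A(X,Y)B(X,Y)$, which is linear in $B$. Hence the metrics for which $A$ is strictly $2$-convex form a convex cone, and no spectral regularity is ever needed.

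Your construction $B=h(f(S)\cdot,\cdot)$ is global and explicit, and it even gives the uniform bound $\mu_1+\mu_2\ge\tfrac12$. The price is that everything now rests on the smoothness of $x\mapsto f(S_x)$ across eigenvalue crossings, and your justification there is loose. A Riesz--Dunford contour integral needs $f$ holomorphic near $[-\Lambda,\Lambda]$, and an $f$ built by smooth interpolation is not. The claim itself is true: smooth functions induce smooth maps on symmetric matrices (Daleckii--Krein).

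The simplest repair uses the fact that every inequality you need holds with a margin on the compact interval $[-\Lambda,\Lambda]$; for example, $\sup_{[-\Lambda,0]}|a|/f(a)<\tfrac12$. So you may replace $f$ by a polynomial $p$ uniformly close to $f$ there. Then:
\begin{itemize}
\item $p(S)$ is manifestly smooth;
\item $p>0$ on the spectrum;
\item $a_i/p(a_i)$ is close to $1$ for $i\ge2$;
\item $a_1/p(a_1)>-\tfrac12$.
\end{itemize}
Consequently the two smallest relative eigenvalues still have positive sum.

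One minor slip: the interpolation on $[\delta,2\delta]$ cannot be monotone. The values must drop from $2\sqrt2\,\delta$ to $2\delta$, while the matching slopes at both ends are positive. Take instead $f=\chi\cdot 2\sqrt{a^2+\delta^2}+(1-\chi)a$ with a cutoff $\chi$. On that interval this is a convex combination of two functions that are both $\ge a>0$.
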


 \begin{proof}[Proof of Lemma \ref{lem:two-convex-metric}]
We first construct the desired metric locally and then glue the local metrics using a partition of unity.

Fix $p\in\Sigma$. By the assumption on $A$, there exists a basis $\{e_1,\ldots,e_m\}$ of
$T_p\Sigma$ such that
\[
A_p=\diag(a_1,\ldots,a_m),
\qquad
0<a_2\leq\ldots\leq a_m.
\]
Choose $L>0$ sufficiently large so that $a_2>L^{-1}|a_1|$. With respect to the same basis, define an inner product $B_p$ on $T_p\Sigma$ by
\[
B_p=\diag(L,1,\ldots,1).
\]
The eigenvalues of \(A_p\) with respect to \(B_p\), listed in nondecreasing order, are
\[
\frac{a_1}{L},a_2,\ldots,a_m.
\]
By Ky Fan's minimum principle \cite{Fan},
\[
\frac{a_1}{L}+a_2
=
\min_{\substack{v_1,v_2\in T_p\Sigma\\
B_p(v_i,v_j)=\delta_{ij}}}
\left\{
A_p(v_1,v_1)+A_p(v_2,v_2)
\right\}.
\]
Consequently, \(A_p\) is strictly $2$-convex with respect to \(B_p\).

Extend $B_p$ smoothly to a Riemannian metric $B^{(p)}$ on a
neighborhood $U_p$ of $p$. After shrinking $U_p$ if necessary, $A$
is strictly $2$-convex with respect to $B^{(p)}$ on $U_p$. The neighborhoods $U_p$, $p\in\Sigma$, form an open cover of $\Sigma$. By compactness, select a finite subcover and relabel its members and
the corresponding metrics as $U_1,\ldots,U_N$ and $B^{(1)},\ldots,B^{(N)}$, respectively. Let $\{\rho_\alpha\}_{\alpha=1}^N$ be a partition of unity subordinate to this cover. Then
\[
B=\sum_{\alpha=1}^N\rho_\alpha B^{(\alpha)}
\]
is a smooth Riemannian metric. 

It remains to verify that $A$ is strictly $2$-convex with respect to $B$. Fix $x\in\Sigma$ and a $2$-plane $P=\Span\{X,Y\}\subset T_x\Sigma$. A direct calculation gives
\[
\tr_{B|_P}(A|_P)
=
\frac{
 A(X,X)B(Y,Y)+A(Y,Y)B(X,X)-2A(X,Y)B(X,Y)
}{
 B(X,X)B(Y,Y)-B(X,Y)^2
}.
\]
Note that the denominator is positive, while the numerator is linear
in \(B\) and hence is a partition-of-unity sum of the corresponding positive numerators for the $B^{(\alpha)}$. Therefore, $\tr_{B|_P}(A|_P)>0$. This proves the lemma.
\end{proof}

\begin{proof}[Proof of Theorem \ref{Thm: main 2'}]
We follow the same boundary-replacement strategy as in the proof of Theorem \ref{Thm: Ric extension}. We present only the negative sectional curvature case; the negative curvature operator case follows by the same argument.

By assumption, $M$ admits a smooth metric $g_0$ with negative sectional curvature such that the boundary has index at most $1$. Extend $g_0$ smoothly across $\partial M$, as is always possible; see
Pigola and Veronelli \cite[Theorem~A]{PV}. After restricting to a sufficiently small outer collar, the extended metric still has negative sectional curvature.

Using the extended metric, choose Gaussian normal coordinates
\(
(x,r)\in \partial M\times(-\varepsilon,\varepsilon)
\)
such that $r=0$ on $\partial M$, the portion of $M$ in this collar
corresponds to $r\leq 0$, and $\partial_r$ points outward from $M$. In this collar neighborhood, write
\[
g_0=dr^2+\eta_r.
\]
Let $\secf_r$ denote the second fundamental form of the $r$-slice with respect to the inward unit normal $-\partial_r$. Under our convention,
\[
\secf_r=\frac12\eta_r'.
\]
The Riccati equation gives
\[
\secf_r'=\secf_r^2-R(\partial_r,\cdot,\cdot,\partial_r).
\]
Thus, $\secf_r>\secf_0$ for every sufficiently small $r>0$. Since $\secf_0$
has negative index of inertia at most $1$, $\secf_r$ has positive index of
inertia at least $n-2$.

Fix such a small $r>0$ and identify the manifold obtained by adjoining the collar region $\partial M\times[0,r]$ to $M$ with $M$. Denote the resulting metric on $M$ by $g_-$.  After rescaling $g_-$ by a sufficiently small positive constant, we may assume that its induced boundary metric $\gamma_0$ satisfies
\[
\gamma>2\gamma_0.
\]
 Let
$\secf^-$ denote the second fundamental form of $\partial M$ in
$(M,g_-)$ with respect to the inward unit normal.

Applying Lemma \ref{lem:two-convex-metric} to $\secf^-$ on $\partial M$, we obtain a Riemannian
metric $B$ such that $\secf^-$ is strictly $2$-convex with respect to
$B$. Replacing $B$ by a sufficiently small positive multiple, which does not affect strict
$2$-convexity, we may assume that
\[
D:=\frac{1}{2}\gamma-\gamma_0-B>0.
\]

We first construct a neck whose outer boundary is strictly convex and
carries the metric $\frac{1}{2}\gamma$. Let
\(
N_1=\partial M\times[0,1]
\)
and consider the path
\[
\gamma_t^{(1)}=\gamma_0+tB+t^2D.
\]
Then $\{\gamma_t^{(1)}\}_{t\in [0,1]}$ satisfies the following:
\[
\gamma_0^{(1)}=\gamma_0,\qquad \gamma_1^{(1)}=\frac{1}{2}\gamma,\qquad \bigl(\gamma_t^{(1)}\bigr)'=B+2tD>0.
\]
Using the quasi-spherical construction in the proof of Lemma
\ref{Lem: NSecCcobordismlm}, we obtain a metric
\[
g_1=u_1(t)^2dt^2+\gamma_t^{(1)}
\]
on $N_1$ with negative sectional curvature. At $t=0$, the unit normal
pointing into $N_1$ is $u_1(0)^{-1}\partial_t$, and the corresponding
second fundamental form is
\[
\secf_0^{(1)}=-\frac{1}{2u_1(0)}B.
\]
The initial value $u_1(0)$ may be chosen sufficiently small so that
\[
\secf^-+\secf_0^{(1)}
=
\secf^--\frac{1}{2u_1(0)}B<0.
\]
Moreover, since $-\secf_0^{(1)}$ is a positive multiple of $B$, $\secf^-$ is strictly $2$-convex with respect to 
$-\secf_0^{(1)}$. 

Glue $(M,g_-)$ and $(N_1,g_1)$ together. By Theorem \ref{thm: smoothing sec} and Proposition \ref{practical_use}, we can smooth the resulting corner while preserving negative
sectional curvature and leaving the metric unchanged near $\partial M\times\{1\}$. At $\partial M\times\{1\}$, the inward unit
normal is $-u_1(1)^{-1}\partial_t$, and hence its second fundamental
form satisfies
\[
\secf_1^{(1)}
=
\frac{1}{2u_1(1)}
\bigl(\gamma_t^{(1)}\bigr)'\big|_{t=1}=\frac{1}{2u_1(1)}(B+2D)>0.
\]
Thus the new boundary is strictly convex and carries the metric $\frac{1}{2}\gamma$.

We next attach a second neck that realizes the prescribed boundary
metric and makes the final boundary umbilical. Let
\(
N_2=\partial M\times[0,1]
\)
and set
\[
\gamma_t^{(2)}=\frac{1+t}{2}\gamma.
\]
Then $\{\gamma_t^{(2)}\}_{t\in [0,1]}$ satisfies the following:
\[
\gamma_0^{(2)}=\frac{1}{2}\gamma,
\qquad
\gamma_1^{(2)}=\gamma,
\qquad
\bigl(\gamma_t^{(2)}\bigr)'=\frac{1}{2}\gamma>0.
\]
The same quasi-spherical construction gives a metric
\[
g_2=u_2(t)^2dt^2+\gamma_t^{(2)}
\]
on $N_2$ with negative sectional curvature. At $t=0$, its second
fundamental form with respect to the unit normal pointing into $N_2$
is
\[
\secf_0^{(2)}
=
-\frac{1}{2u_2(0)}
\bigl(\gamma_t^{(2)}\bigr)'\big|_{t=0}
=
-\frac{1}{4u_2(0)}\gamma.
\]
Attach $N_2$ to the manifold obtained above along its boundary carrying the metric $\frac{1}{2}\gamma$. By choosing $u_2(0)$ sufficiently small, we can arrange that
\[
\secf_1^{(1)}+\secf_0^{(2)}<0.
\]
Since $\secf_1^{(1)}>0$, Proposition \ref{practical_use} and
Theorem \ref{thm: smoothing sec} allow us to smooth the second
corner while preserving negative sectional curvature and leaving the
metric unchanged near $\partial M\times\{1\}\subset N_2$.

At the remaining boundary component $\partial M\times\{1\}\subset N_2$, the
inward unit normal is $-u_2(1)^{-1}\partial_t$, and its second
fundamental form is
\[
\secf_1^{(2)}
=
\frac{1}{2u_2(1)}
\bigl(\gamma_t^{(2)}\bigr)'\big|_{t=1}
=
\frac{1}{4u_2(1)}\gamma.
\]
Thus, the final boundary has induced metric $\gamma$ and is both
strictly convex and umbilical.

The resulting manifold $\hat M$ is obtained from $M$ by adjoining two collars
and is therefore diffeomorphic to $M$. Choose a diffeomorphism
\[
\Phi:(M,\partial M)\longrightarrow
(\hat M,\partial M\times\{1\})
\]
whose restriction to the boundary is the natural identification.
Then the pullback of the resulting metric by $\Phi$ satisfies all the
required conditions.
\end{proof}

\subsection{AH extensions with prescribed conformal infinity}\label{Prescribing conformal infinity}

In this subsection, we prove Theorem \ref{Thm:conformally compact}. We begin by recalling the notions of conformal compactness and asymptotic hyperbolicity.

\begin{definition}\label{CC}
Let \( M \) be a compact manifold with boundary. A \emph{defining function} for $\partial M$ is a smooth function \(\rho\) on \(M\) such that
\begin{equation*}
\left\{
\begin{aligned}
&\rho > 0 \ \,\quad \text{in} \  \mathring M, \\
&\rho= 0 \ \,\quad \text{on} \  \partial M, \\
&d\rho\neq 0 \quad \text{on} \  \partial M.
\end{aligned}
\right.
\end{equation*}
A Riemannian metric \(g\) on \(\mathring M\) is called
\emph{conformally compact} if there exists a defining function \(\rho\) such
that the rescaled metric \(\bar{g}=\rho^2g\) extends smoothly to a Riemannian
metric on \(M\), and hence induces a smooth Riemannian metric
\(\gamma\) on \(\partial M\).
If \(\tilde{\rho}\) is another defining function, then the boundary metrics
induced by \(\tilde{\rho}^{2}g\) and \(\rho^{2}g\) differ by a positive
conformal factor. Thus a conformally compact metric \(g\) induces a
well-defined conformal class \([\gamma]\) on \(\partial M\). We refer to
\((\partial M,[\gamma])\) as the \emph{conformal infinity} of
\((\mathring M,g)\).
\end{definition}

Given a defining function \(\rho\), a standard conformal-change calculation gives
\[
\Rm^g=-\left|d\rho\right|_{\bar g}^{2}\,g\wedge g+O(\rho)
\]
as \(\rho\to0\), where the error term is measured with respect to \(g\);
see \cite{Mazzeo}. Hence, if
\(\left|d\rho\right|_{\bar g}=1\) on \(\partial M\), then the sectional
curvatures of \(g\) tend to \(-1\) at infinity. This naturally motivates
the following definition.

\begin{definition}\label{WAH}
Let \(M\) be a compact manifold with boundary. A conformally compact metric
\(g\) on \(\mathring M\) is called \emph{asymptotically hyperbolic} if there
exists a defining function \(\rho\) such that
\(
\left|d\rho\right|_{\bar g}=1
\)
on $\partial M$.
\end{definition}

\begin{theorem}[Restatement of Theorem \ref{Thm:conformally compact}]\label{Thm:conformally compact'}
Let \(M\) be a compact manifold with boundary, and let \(g\) be a smooth metric on \(M\) with
negative sectional curvature such that the boundary has index at most \(1\)
everywhere. Then, for every smooth Riemannian metric \(\gamma\) on
\(\partial M\), there exist a complete asymptotically hyperbolic metric
\(\tilde g\) on \(\mathring M\) with negative sectional curvature and
conformal infinity \((\partial M,[\gamma])\), and an isometric embedding
\[
X\colon (M,g)\hookrightarrow (\mathring M,\tilde g).
\]
The same conclusion holds with negative sectional curvature replaced by negative curvature operator throughout.
\end{theorem}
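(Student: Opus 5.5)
We attach to \((M,g)\) an infinite outward end \(\partial M\times[0,\infty)\) in three stages: a compact quasi-spherical neck as in the proof of Theorem \ref{Thm: main 2'}, a transition region, and an exact warped product asymptotic to hyperbolic space whose conformal infinity is \([\gamma]\).

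\emph{Step 1 (convex boundary).} As in the proof of Theorem \ref{Thm: main 2'}, we extend \(g\) slightly past \(\partial M\) (Pigola--Veronelli) and use the Riccati equation to push the boundary outward. This gives a second fundamental form \(\secf^-\) with positive index of inertia at least \(n-2\), while \(M\) still embeds isometrically in the enlarged manifold. Lemma \ref{lem:two-convex-metric} then produces a metric \(B\) with respect to which \(\secf^-\) is strictly \(2\)-convex. We glue on a neck \(g_1=u_1^2dt^2+\gamma^{(1)}_t\) with \(\gamma^{(1)}_t=\gamma_0+tB+t^2D\), with \(u_1(0)\) small, chosen so that the outer boundary carries the metric \(c\gamma\) for a small constant \(c>0\) and is strictly convex. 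We smooth the corner using Theorem \ref{thm: smoothing sec} and Proposition \ref{practical_use}. Since the smoothing is supported in an arbitrary neighborhood \(U\) of the corner, and \(U\) lies outside the original \(M\), the map \((M,g)\hookrightarrow\) (new manifold) remains an isometric embedding.

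\emph{Step 2 (the hyperbolic-type end).} We attach \(N_\infty=\partial M\times[0,\infty)\) with the metric
\[
g_\infty=u(t)^2dt^2+\tfrac{c}{4}e^{2t}\gamma ,
\]
so that \(\gamma_t'=2\gamma_t>0\). Proposition \ref{Lem: curvature relations} gives \(R_{ijkn}=0\) because \(\nabla^{\gamma_t}\gamma_t'=0\). The mixed normal term is \(\Rm(\beta\wedge e_n,\beta\wedge e_n)=(u'/u^3-1/u^2)|\beta|^2\), and the tangential term is \(\Rm^{\gamma_t}-u^{-2}\gamma_t\wedge\gamma_t\), where \(\Rm^{\gamma_t}=O(e^{-2t})\). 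We take \(u\) small and decreasing near \(t=0\), as in Lemma \ref{Lem: NSecCcobordismlm}: this makes the inner second fundamental form \(-\gamma_0'/(2u(0))\) very negative, so that the gluing to the convex boundary from Step 1 satisfies the hypotheses of Theorem \ref{thm: smoothing sec} via Proposition \ref{practical_use}, exactly as in the second neck of Theorem \ref{Thm: main 2'}. We then let \(u\) increase monotonically to \(u\equiv 1\) for \(t\ge T\). The space \(\{t\ge T\}\) is then \(dt^2+e^{2t}\gamma'\) with \(\gamma'=\tfrac c4\gamma\). In the coordinate \(\rho=e^{-t}\) we have \(\rho^2 g_\infty=d\rho^2+\tfrac c4\gamma\), which extends smoothly to \(\rho=0\), satisfies \(|d\rho|_{\bar g}=1\), and has conformal infinity \([\gamma]\). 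Completeness is clear, since the end has infinite length in \(t\) and the interior is compact.

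\emph{Main obstacle: the transition of \(u\) from small to \(1\).} Here \(u'>0\) is allowed in the normal curvature only if \(u'<u\), which holds for instance when \(u=e^{\phi}\) with \(0\le\phi'<1\). However, the tangential curvature \(\Rm^{\gamma_t}-u^{-2}\gamma_t\wedge\gamma_t\) needs \(e^{-2t}|\Rm^{c\gamma/4}|\ll u^{-2}\), i.e. \(u^2 e^{-2t}\) small relative to \(1\). I would first move far out with \(u\) small, so that \(e^{-2t}\) is tiny, and only then let \(u\) grow slowly to \(1\) with \(\phi'\le 1/2\). Both conditions then hold, and the curvature operator stays negative because the mixed terms vanish and the two blocks are separately negative.

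\emph{Regularity and the negative curvature operator case.} To keep \(u\) smooth, we alternatively apply the Step 2 form directly to the outer boundary of Step 1 with a second corner smoothing. The curvature-operator case runs identically, using the operator versions of Lemma \ref{Lem: NSecCcobordismlm} and Theorem \ref{thm: smoothing sec}.
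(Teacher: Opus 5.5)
Your proposal is correct and is essentially the paper's proof. Step 1 is the paper's first step, and your end is the paper's warped product $dt^2+f(t)^2\gamma$ written in quasi-spherical gauge. Indeed, set $s=\int_0^t u$ and $f=\tfrac{\sqrt c}{2}e^{t}$. Then $g_\infty=ds^2+f(s)^2\gamma$, with $f_s/f=1/u$ and $f_{ss}=\frac{f}{u^2}\bigl(1-\frac{u'}{u}\bigr)$. So your three conditions (namely $u'<u$, $u^2e^{-2t}$ small, and $u(0)$ small) are exactly the paper's conditions $f''>0$, $f'^2>C_2$, and $f(0)f'(0)$ large.

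The only real difference is in the lapse. The paper's explicit choice $f=(a+\lambda)e^t-ae^{-t}$ corresponds to a lapse that increases to $1$ only asymptotically. Yours reaches $1$ at $T$, so your end is exactly $dt^2+e^{2t}\cdot\frac c4\gamma$ there. This is what Lemma \ref{Lem: improve conformal} later arranges by a cutoff.

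Two corrections are needed:
\begin{itemize}
\item The constant $c$ must be large, not small. The neck needs $D=c\gamma-\gamma_0-B>0$, and you may not rescale $g$ itself because $X$ must be an isometric embedding. The paper rescales, builds the neck, and rescales back, ending with $\lambda^2\gamma$ for large $\lambda$.
\item The $t=0$ slice of $g_\infty$ must equal the outer boundary metric from Step 1, which is $c\gamma$, not $\frac c4\gamma$.
\end{itemize}

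Your ``move far out first'' precaution and the final regularity paragraph are unnecessary. If $\phi'<1$, then $u^2e^{-2t}=e^{2(\phi-t)}$ is decreasing, so the tangential condition propagates from $t=0$. The lapse $u$ can simply be chosen smooth from the start.
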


\begin{proof}
We present only the negative sectional curvature case; the negative curvature operator case follows by the same argument. 

Recall the construction in the proof of Theorem \ref{Thm: main 2'}. We first extend \(g\) slightly beyond
\(\partial M\) and rescale the extended metric by a suitable positive constant.
We then attach a neck, whose outer boundary is
strictly convex and carries the metric \(\gamma\). Both the neck attachment and the corner smoothing can be carried out in the added region, leaving the rescaled metric unchanged on \(M\).
Rescaling the resulting metric back, we obtain a compact manifold \(M_-\),
diffeomorphic to \(M\), and a metric \(g_-\) of negative sectional curvature
on \(M_-\) such that the original \((M,g)\) is isometrically contained in
\((\mathring M_-,g_-)\), while \(\partial M_-\) is strictly convex and carries
the metric \(\lambda^2\gamma\) for some constant \(\lambda>0\). Denote the
resulting inclusion by
\[
X_0\colon (M,g)\hookrightarrow(\mathring M_-,g_-).
\]

Identify \(\partial M_-\) with \(\partial M\), and let \(\secf^-\) denote
its second fundamental form with respect to the inward unit normal. Since
\(\partial M\) is compact, there exists a constant \(C_1>0\) such that
\(
\secf^-\leq C_1\gamma.
\)

Set \(N=\partial M\times[0,\infty)\), and equip \(N\) with the warped-product metric
\[
g_+=dt^2+f(t)^2\gamma,
\]
where \(f\) will be chosen below. Let \(\secf^+\) denote the second
fundamental form of \(\partial M\times\{0\}\) with respect to the inward
unit normal. By Proposition \ref{Lem: curvature relations},
\begin{equation}\label{secfc}
\secf^+=-f(0)f'(0)\gamma.
\end{equation}

Let \(\{e_i\}_{i=1}^{n-1}\) be a local frame tangent to \(\partial M\),
and set \(e_n=\partial_t\). In what follows, the indices \(i,j,k,l\) range
from \(1\) to \(n-1\). Proposition \ref{Lem: curvature relations} gives
\begin{equation}\label{tnc}
R^{g_+}_{nijn}=-ff''\gamma_{ij},
\end{equation}
\begin{equation}\label{ttc}
R^{g_+}_{ijkl}
=
f^2R^\gamma_{ijkl}
-f^2f'^2
\left(
\gamma_{il}\gamma_{jk}-\gamma_{ik}\gamma_{jl}
\right),
\end{equation}
\begin{equation}\label{mixedc}
R^{g_+}_{ijkn}=0.
\end{equation}

Let \(\omega\) be an arbitrary bivector and decompose it as
\[
\omega=\psi+\beta\wedge e_n,
\]
where $\psi$ is a bivector and $\beta$ is a vector, both tangent to $\partial M$. Then
\begin{equation}\label{curoprexpansion2c}
\Rm^{g_+}\left(\omega,\omega\right)=\Rm^{g_+}\left(\psi,\psi\right)+2\Rm^{g_+}\left(\psi,\beta\wedge e_n\right)+\Rm^{g_+}\left(\beta\wedge e_n,\beta\wedge e_n\right).
\end{equation}
By compactness of \(\partial M\), there exists \(C_2>0\) such that
\[
\Rm^{\gamma}\leq C_2\gamma\wedge\gamma.
\]
Substituting \eqref{tnc}--\eqref{mixedc} into \eqref{curoprexpansion2c} yields
\[
\Rm^{g_+}(\omega,\omega)
\leq
f^2\left(C_2-f'^2\right)|\psi|_\gamma^2
-ff''|\beta|_\gamma^2.
\]

Choose \(a>0\) sufficiently large that
\[
(2a+\lambda)^2>C_2,
\qquad
\lambda(2a+\lambda)>C_1,
\]
and set
\begin{equation*}
f(t)=(a+\lambda)e^t-ae^{-t}.
\end{equation*}
Then
\[
f(0)=\lambda,\qquad
f'(0)=2a+\lambda,\qquad
f''=f>0.
\]
It follows that
\(
\Rm^{g_+}(\omega,\omega)<0
\)
for every nonzero bivector \(\omega\). Thus \(g_+\) has negative curvature
operator. Furthermore,
by \eqref{secfc},
\[
\secf^+
=
-\lambda(2a+\lambda)\gamma
<
-C_1\gamma,
\]
and hence
\(
\secf^-+\secf^+<0.
\)

Note \(\left.g_+\right|_{\partial M\times\{0\}}=\lambda^2\gamma=\left.g_-\right|_{\partial M_-}\). We may glue \(M_-\) and \(N\) along their boundaries. Let
\(
\hat M=M_-\cup_{\partial M}N
\)
and equip it with the resulting piecewise-smooth metric
\[
\hat g=
\begin{cases}
g_- & \text{on }M_-,\\
g_+ & \text{on }N.
\end{cases}
\]
Since \(\secf^->0\), \(\secf^+<0\), and \(\secf^-+\secf^+<0\), Theorem
\ref{thm: smoothing sec} and Proposition \ref{practical_use} allow us to smooth $\hat g$ to a metric \(\tilde g\) on \(\hat M\) with negative sectional curvature. The smoothing may be supported in an arbitrarily
small neighborhood of the gluing hypersurface so that \(\tilde g\) agrees with \(g_-\) on \(X_0(M)\) and with \(g_+\) on
\(\partial M\times[1,\infty)\).

It remains to examine the asymptotic geometry of \(\tilde g\). Compactify
\(\hat M\) by adjoining a copy of \(\partial M\) at \(t=\infty\), and
use
\(
\rho=e^{-t}
\)
as a boundary coordinate. Extend \(\rho\) smoothly and positively over the
compact part of \(\hat M\). On \(\partial M\times[1,\infty)\), we have
\[
\rho^2\tilde g
=
d\rho^2+
\left(a+\lambda-a\rho^2\right)^2\gamma.
\]
Thus \(\rho^2\tilde g\) extends smoothly across \(\rho=0\), and
\(
|d\rho|_{\rho^2\tilde g}^2=1.
\)
Moreover,
\[
\left.(\rho^2\tilde g)\right|_{\partial M}
=
(a+\lambda)^2\gamma.
\]
It follows that \(\tilde g\) is asymptotically hyperbolic with conformal infinity
\((\partial M,[\gamma])\). 

Finally, the resulting compactification is diffeomorphic to \(M\). Thus we may regard \(\tilde g\) as a metric on \(\mathring M\) and \(X_0\) as the required isometric embedding \(X\).
\end{proof}

\subsection{Structural properties of $\mathcal C_n$ and $\mathcal C'_n$}
\begin{theorem}\label{Thm: structure'}
The following statements hold:
\begin{itemize}[left=0.5cm]
\item[(i)] The classes $\mathcal C_n$ and $\mathcal C'_n$ are closed under boundary connected sums and attachments of boundary $1$-handles. In particular, $\mathcal C'_n$ contains all $n$-dimensional $1$-handlebodies. 
\item[(ii)] Let $N^k$ be a closed manifold that admits a metric with negative sectional curvature (respectively, negative curvature operator), and let $E$ be a rank $m$ vector bundle over $N$. Then the disk bundle \(D(E)\) belongs to \(\mathcal C_{k+m}\) (respectively, \(\mathcal C'_{k+m}\)).
\end{itemize}
\end{theorem}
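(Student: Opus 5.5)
For part (i), I plan to follow the Maskit-type gluing at conformal infinity described in the introduction, with Theorem \ref{Thm:conformally compact'} as the main tool. Let $M_1,M_2\in\mathcal C_n$ carry metrics $g_1,g_2$ of negative sectional curvature with boundary index at most $1$; a boundary $1$-handle on a single manifold is handled the same way with two points of one boundary. Choose points $p_i\in\partial M_i$ and boundary metrics $\gamma_i$ that are flat on a neighborhood of $p_i$.

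The first step is to refine the proof of Theorem \ref{Thm:conformally compact'} so that the AH end is exactly hyperbolic near the points at infinity over $p_i$. In the warped product $g_+=dt^2+f(t)^2\gamma$, I will replace $f=(a+\lambda)e^t-ae^{-t}$ by a function that agrees with it near $t=0$ and equals $ce^t$ for $t\ge T$. I will keep $f''>0$ and $f'^2>C_2$. Where $\gamma$ is flat and $t\ge T$, the metric is $dt^2+c^2e^{2t}\gamma_{\rm flat}$, which is the hyperbolic metric in horospherical coordinates. Elsewhere the estimate $\Rm^{g_+}(\omega,\omega)\le f^2(C_2-f'^2)|\psi|^2-ff''|\beta|^2<0$ still applies, and the corner smoothing is unchanged. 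The result is a complete negatively curved (respectively, negative curvature operator) AH manifold $(\mathring M_i,\tilde g_i)$ containing $(M_i,g_i)$ isometrically. Near $p_i$ at infinity it is isometric to a region $\{x_n>\varepsilon\}$ of the upper half-space over a small Euclidean ball around $p_i$.

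The second step is the gluing. In these half-space charts I remove small half-balls $H_i$ centered at $p_i$ and identify their hemispherical boundaries, which are totally geodesic, by a hyperbolic isometry exchanging inside and outside (an inversion composed with a reflection). The result is smooth and has the same curvature condition, since the metric is exactly hyperbolic across the identification. Its conformal compactification is the boundary connected sum (respectively, the $1$-handle attachment).

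The third step, which I expect to be the main obstacle, is to cut out a compact domain containing $X(M_1)\sqcup X(M_2)$ whose boundary has index at most $1$. Away from the handle I will take $\{t=T'\}$ for large $T'$; there $\secf^+$ is a positive multiple of $\gamma$, so these slices are strictly convex. Across the handle I will take the boundary of a tube of radius $r$ about the geodesic crossing the identified hemisphere. The tube has principal curvatures $\coth r$ ($n-2$ times) and $\tanh r$, so it is also convex. The delicate part is blending the tube into the horizontal hypersurfaces $x_n=\varepsilon$ in the hyperbolic chart. I will use an explicit rotationally symmetric profile, symmetric about the geodesic axis, and compute its principal curvatures. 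I expect the $n-2$ rotational directions to stay positive and only the meridian direction to become negative, giving index at most $1$ exactly as for a classical $1$-handle. For the final assertion of (i), the ball with a round hyperbolic metric lies in $\mathcal C'_n$, and iterating $1$-handle attachments yields all $1$-handlebodies.

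For part (ii), only a local construction near the zero section is needed. Fix a metric $g_N$ of negative sectional curvature (respectively, curvature operator), a fiber metric, and a metric connection on $E$. On $D_r(E)$, with radial fiber coordinate $s$, I will define a connection (Kaluza--Klein type) metric
\[
g=ds^2+\sinh^2 s\, g_{\mathbb S^{m-1}}+\cosh^2 s\,\pi^*g_N ,
\]
where the round sphere metric is taken in the vertical directions. This mimics the hyperbolic metric around a totally geodesic submanifold. I will first rescale $g_N$ so that its curvature lies in a fixed negative range, for instance by multiplying it by a suitable constant. I will then compute the curvature via O'Neill-type formulas: the flat-connection part has negative curvature, and the terms coming from the connection curvature and the O'Neill $A$-tensor are $O(s)$. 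So for $r$ small the curvature stays negative, which also needs to be checked for the curvature operator. The slices $\{s=r\}$ have second fundamental form approximately $\coth r$ on vertical directions and $\tanh r$ on horizontal ones, hence they are strictly convex (index $0$). Therefore $D(E)\cong D_r(E)$ lies in $\mathcal C_{k+m}$ (respectively, $\mathcal C'_{k+m}$).
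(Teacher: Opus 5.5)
\emph{Part (ii) has a genuine gap.} You argue that, for a non-flat connection, the terms coming from the connection curvature $F$ and the O'Neill tensor are $O(s)$, so that negativity of the flat model survives on a thin tube. This fails at the zero section. Although $A_XY=\tfrac12\mathcal V[X,Y]=-\tfrac12F(X,Y)v$ vanishes there, O'Neill's formulas also involve its vertical derivative, $\langle(\nabla_UA)_XY,V\rangle=-\tfrac12\langle F(X,Y)U,V\rangle$, which is $O(1)$.

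Concretely, your metric agrees with the Euclidean connection metric $g_0$ up to $O(s^2)$. So along the zero section its curvature is $\Rm^{g_0}$ plus the model contribution: $-1$ on $TN\wedge E$ and on $\wedge^2E$, nothing extra on $\wedge^2TN$. But $\Rm^{g_0}$ has entries $\Rm(X\wedge Y,U\wedge V)$ and $\Rm(X\wedge V,U\wedge Y)$ proportional to $\langle F(X,Y)U,V\rangle$. Take orthonormal horizontal $X,Y$ and vertical $U,V$. The plane spanned by $X+U$ and $Y+V$ then has curvature $\tfrac14\bigl(\sec_{g_N}(X,Y)-3\bigr)$ plus a fixed nonzero multiple of $\langle F(X,Y)U,V\rangle$. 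This is positive for a suitable choice of $V$ once $|F|$ is large. Shrinking $r$ cannot help, and normalizing $g_N$ to a fixed curvature range does not make $|F|$ small.

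The missing ingredient is a large parameter that separates the scales. For instance, replace $g_N$ by $L^2g_N$ with $L\to\infty$. Then the $\wedge^2TN$ block is $\le -cL^{-2}$ and the $F$-terms are $O(L^{-2})$, while the $TN\wedge E$ and $\wedge^2E$ blocks stay near $-1$. Young's inequality closes because the squared cross term is $O(L^{-4})$, which is much smaller than the product $L^{-2}\cdot 1$ of the diagonal blocks. Equivalently, you can give the fibers curvature $-\kappa^2$ with $\kappa$ large. This is exactly the role of $\lambda$ in the paper: for $g_\lambda=e^{2\lambda r^2}g_0$, the conformal change adds $-2\lambda|\beta|^2-4\lambda|\gamma|^2$ along the zero section, which absorbs the bounded $F$-terms once $\lambda>C/2$. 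Note also that, because of the $\cosh^2 s$ factor, your metric is not a Riemannian submersion, so O'Neill's formulas would need adapting anyway; the expansion at the zero section sidesteps this. Your convexity computation for $\{s=r\}$ is fine.

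\emph{Part (i)} is sound in outline and differs from the paper only in the last step. There is one slip: the exactly hyperbolic region near $p_i$ is $\{0<x_n<\varepsilon\}$, i.e.\ $t\ge T$, not $\{x_n>\varepsilon\}$. Also, your truncating horosphere $\{t=T'\}$ must lie in that region but above the excised half-annulus, so fix $T'$ first and then choose the half-ball radius.

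Your tube-plus-horosphere domain does have index at most $1$. In the half-space chart, a rotationally symmetric rounding $y_n=h(|y'|)$ with $h'\ge0$ has hyperbolic principal curvatures $\bigl(1+y_nh'/|y'|\bigr)/\sqrt{1+h'^2}>0$ in the $n-2$ rotational directions, with respect to the normal pointing into the domain. So only the meridian direction can turn negative.

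The paper bypasses this computation entirely. It interpolates the two defining functions as $\rho_\#=q(|y|)\,y_n$ on the half-annulus and checks that the glued metric is again AH. It then uses the fact that level sets of a defining function of an AH metric satisfy $\secf=\bigl(|d\rho_\#|_{\bar g}+O(\delta)\bigr)g_\#$. Hence $\{\rho_\#\ge\delta\}$ is even strictly convex, with no corner rounding, and the glued manifold stays AH, which is convenient for iterating handle attachments.
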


The closure statements in part (i) will be proved by a Maskit-type gluing at conformal infinity. We first refine the preceding AH extension construction so that the metric is exactly hyperbolic near a prescribed point at infinity. Throughout this subsection, if \(x=(x_1,\ldots,x_k)\) denotes Euclidean
coordinates, we write
\[
dx^2:=\sum_{i=1}^k(dx_i)^2.
\]

\begin{lemma}[Refinement of Theorem \ref{Thm:conformally compact'}]\label{Lem: improve conformal}
In Theorem \ref{Thm:conformally compact'}, if $[\gamma]$ is locally conformally flat near $p\in \partial M$, then the AH metric $\tilde g$ may be chosen to be exactly hyperbolic near $p$. More precisely, there exist a neighborhood \(U\) of \(p\) in \(\partial M\), a constant \(\varepsilon>0\), and collar coordinates \((x,\rho)\) on \(U\times[0,\varepsilon)\), with \(\rho\) a defining function, such that
\begin{equation*}
\tilde g=\frac{1}{\rho^2}\left(d\rho^2+dx^2\right)\quad\text{on}\ U\times(0,\varepsilon).
\end{equation*}
\end{lemma}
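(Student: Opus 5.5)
The plan is to rerun the proof of Theorem \ref{Thm:conformally compact'} with two modifications: a different representative of the conformal class, and a different warping function $f$ on the end $N=\partial M\times[0,\infty)$.

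\emph{Choice of representative.} Since $[\gamma]$ is locally conformally flat near $p$, there are a neighborhood $U'$ of $p$ and coordinates $x$ on $U'$ with $\gamma=e^{2\phi}dx^2$ there. Using a cutoff in the conformal factor, I pick $\hat\gamma\in[\gamma]$ with $\hat\gamma=dx^2$ on a smaller neighborhood $U\Subset U'$. The conformal infinity only depends on the class, so I run the whole construction with $\hat\gamma$ in place of $\gamma$. This gives, as before, $(M_-,g_-)$ with strictly convex boundary carrying $\lambda^2\hat\gamma$, constants $C_1,C_2$ with $\secf^-\le C_1\hat\gamma$ and $\Rm^{\hat\gamma}\le C_2\,\hat\gamma\wedge\hat\gamma$, and the end metric $g_+=dt^2+f(t)^2\hat\gamma$.

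\emph{Choice of $f$.} Replace the exponential combination by
\[
f(t)=(a+\lambda)e^t-a\psi(t),
\]
where $\psi$ is smooth, convex and nonincreasing, with $\psi=e^{-t}$ on $[0,1]$ and $\psi\equiv0$ on $[L,\infty)$. Taking $L$ large, the drop from $\psi(1)=e^{-1}$ with slope $-e^{-1}$ down to $0$ with slope $0$ can be done with $\psi''$ arbitrarily small on $[1,L]$. This gives:
\begin{itemize}[left=0.5cm]
\item $f(0)=\lambda$ and $f'(0)=2a+\lambda$, exactly as before, so $\secf^+=-\lambda(2a+\lambda)\hat\gamma<-C_1\hat\gamma$ for $a$ large.
\item $f'=(a+\lambda)e^t-a\psi'\ge(a+\lambda)e^t>0$, hence $f'^2\ge(a+\lambda)^2>C_2$ for $a$ large.
\item $f''=(a+\lambda)e^t-a\psi''>0$, since $\psi''=e^{-t}<e^{t}$ on $[0,1]$ and $\psi''$ is tiny on $[1,L]$.
\end{itemize}
The curvature estimate from the proof of Theorem \ref{Thm:conformally compact'} then applies verbatim:
\[
\Rm^{g_+}(\omega,\omega)\le f^2(C_2-f'^2)|\psi|^2-ff''|\beta|^2<0,
\]
so $g_+$ has negative curvature operator. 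The gluing and the smoothing via Theorem \ref{thm: smoothing sec} and Proposition \ref{practical_use} go through unchanged, because only the data at $t=0$ entered there. The smoothing is supported near $t=0$, so $\tilde g=g_+$ for $t\ge1$.

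\emph{The asymptotic region.} For $t\ge L$ we have $f=ce^t$ with $c=a+\lambda$. Set $\rho=c^{-1}e^{-t}$. Then
\[
\tilde g=dt^2+c^2e^{2t}\hat\gamma=\rho^{-2}\left(d\rho^2+\hat\gamma\right).
\]
Consequently $\rho^2\tilde g=d\rho^2+\hat\gamma$ extends smoothly to $\rho=0$, $|d\rho|_{\rho^2\tilde g}=1$, and the boundary metric is $\hat\gamma\in[\gamma]$. So the AH property and the conformal infinity are preserved. On $U\times(0,\varepsilon)$ with $\varepsilon=c^{-1}e^{-L}$, we get $\tilde g=\rho^{-2}(d\rho^2+dx^2)$, which is exactly hyperbolic as claimed. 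The isometric embedding of $(M,g)$ is unaffected.

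\emph{Main obstacle.} The only delicate point is building $f$ so that it keeps $f'>0$, $f''>0$, $f'^2>C_2$ and the prescribed initial data while becoming a pure exponential. The key is that $\psi$ can be chosen convex with $\psi''$ as small as desired over a long interval, so the dominant term $(a+\lambda)e^t$ controls both $f'$ and $f''$.
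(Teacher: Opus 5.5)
Your overall plan is the paper's: pass to a representative of $[\gamma]$ that is flat near $p$, keep $f(0)=\lambda$ and $f'(0)=2a+\lambda$ so the gluing and corner smoothing are untouched, and make the decaying part of $f$ vanish for large $t$ so that the end becomes $dt^2+c^2e^{2t}\hat\gamma$. The gap is in the step you flag as the key one. You claim that a convex nonincreasing $\psi$ with $\psi=e^{-t}$ on $[0,1]$ and $\psi\equiv0$ on $[L,\infty)$ can have $\psi''$ arbitrarily small on $[1,L]$ once $L$ is large. This is false for every $L$. Smoothness alone forces $\psi''(1)=e^{-1}$. More robustly, suppose $\psi''\le\epsilon$ on $[1,L]$. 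Then $\psi'(t)\le -e^{-1}+\epsilon(t-1)$, and $\psi'(L)=0$ gives $T:=1/(e\epsilon)\le L-1$. Integrating, $0\le\psi(1+T)\le e^{-1}-1/(2e^2\epsilon)$, so $\epsilon\ge 1/(2e)$. The slope at $t=1$ is already $-e^{-1}$, and relaxing it slowly to $0$ would make $\psi$ drop by far more than the available height $e^{-1}$. Lengthening the interval therefore does not help, and your stated reason for $f''>0$ on $[1,L]$ fails.

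The conclusion is still true, because you never needed $\psi''$ small. The condition $f''>0$ only requires $a\psi''<(a+\lambda)e^t$, and since $e^t\ge e$ on $[1,L]$, it suffices that $\psi''<e$ there. Such a $\psi$ exists. Take $\psi(t)=\int_t^\infty(s-t)h(s)\,ds$, where $h\ge0$ is smooth and compactly supported, $h=e^{-t}$ on $[0,1]$, $h\le1$, and $\int_1^\infty h=\int_1^\infty(s-1)h(s)\,ds=e^{-1}$. Concretely, $h$ can be a short cutoff of $e^{-t}$ after $t=1$ plus a bump of mass close to $e^{-1}$ centered near $t=2$. This $\psi$ is convex and nonincreasing, equals $e^{-t}$ on $[0,1]$, and vanishes for large $t$.

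Alternatively, let $\psi$ follow $e^{-t}$ on a long interval $[0,R]$ before cutting off. The transition then starts from value and slope of size $e^{-R}$, and your intuition of a negligible $\psi''$ becomes correct. This is exactly the paper's choice $f_R=(a+\lambda)e^t-a\eta(t/R)e^{-t}$. There $f_R''=f_R+O(R^{-1})$ together with $f_R\ge\lambda$ gives $f_R''>0$ directly, hence $f_R'\ge f_R'(0)=2a+\lambda$, with no convexity or monotonicity needed for the cutoff.

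With either repair, the rest of your argument goes through as written: the bound $f'\ge(a+\lambda)e^t$, the curvature estimate, the normalization $\rho=c^{-1}e^{-t}$, and the AH and exact-hyperbolicity check.
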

\begin{proof}
After replacing \(\gamma\) by a conformally equivalent representative, we may assume that \(\gamma\) is flat in a neighborhood of \(p\). We repeat the proof of Theorem \ref{Thm:conformally compact'}, modifying only the warping function on the AH end.

Let \(\eta\colon[0,\infty)\to[0,1]\) be a smooth cut-off function such that
\(\eta\equiv1\) near \(0\) and \(\eta\equiv0\) on \([1,\infty)\). With \(a>0\) chosen as in the proof of Theorem \(\ref{Thm:conformally compact'}\), replace the warping function \(f\) on the AH end by
\[
f_R(t)=(a+\lambda)e^t-a\eta\left(\frac{t}{R}\right)e^{-t},
\]
where \(R>0\) will be chosen sufficiently large. We have
\[
f_R(t)
\geq
(a+\lambda)e^t-ae^{-t}
\geq
\lambda>0.
\]
Moreover, direct differentiation gives
\[
f_R''=f_R+O\big(R^{-1}\big)
\]
uniformly on \([0,\infty)\). Hence \(f_R''>0\) when \(R\) is sufficiently large. Consequently, 
\[
f_R'(t)\geq f_R'(0)=2a+\lambda.
\]
By the same curvature estimate as in the proof of Theorem \(\ref{Thm:conformally compact'}\), the modified warped end has negative curvature operator. Since \(\eta\) is constant near \(0\), $f_R$ agrees with the original warping function there. Hence the induced metric and second fundamental form at the gluing hypersurface are unchanged, and the corner-smoothing argument remains valid.

For \(t\geq R\), we have
\(
f_R(t)=(a+\lambda)e^t,
\)
and hence the resulting metric $\tilde g$ satisfies
\[
\tilde g=dt^2+(a+\lambda)^2e^{2t}\gamma
\]
in this region. Since \(\gamma\) is flat near \(p\), there exist a
neighborhood \(U\) of \(p\) and coordinates \(x\) on \(U\) such that
\(
(a+\lambda)^2\gamma=dx^2.
\)
Setting \(\rho=e^{-t}\), we obtain
\[
dt^2+(a+\lambda)^2e^{2t}\gamma
=
\frac{1}{\rho^2}\left(d\rho^2+dx^2\right)
\]
on \(U\times(0,e^{-R})\). Thus the AH extension is exactly hyperbolic near \(p\), as claimed.
\end{proof}

\begin{proof}[Proof of Theorem \ref{Thm: structure'}]
(i) We first prove closure under boundary connected sums. Let \(M_1,M_2\in\mathcal C_n\) (respectively, \(M_1,M_2\in\mathcal C'_n\)). For $i=1,2$, choose smooth metrics \(\gamma_i\) on \(\partial M_i\) that are flat near points \(p_i\in\partial M_i\). By Lemma \ref{Lem: improve conformal}, there exist an AH metric \(g_i\) on \(\mathring M_i\) and a defining
function \(\rho_i\) for \(\partial M_i\) such that \(g_i\) has negative
sectional curvature (respectively, negative curvature operator) and is
exactly hyperbolic near \(p_i\).

Consider the upper half-space model of the hyperbolic space
\[
\mathbb H^n=\{y\in\mathbb R^n\mid y_n>0\},
\quad
g_{\mathbb H}=\frac{dy^2}{y_n^2},
\]
and denote by \(B_s^+\) and \(S_s^+\) the Euclidean upper half-ball and upper
hemisphere of radius \(s\), respectively. There exist \(r>0\), compact neighborhoods \(V_i\) of
\(p_i\) in \(M_i\), and diffeomorphisms
\(
\Phi_i\colon V_i\longrightarrow\overline{B_{2r}^+}
\)
such that
\[
\rho_i=\Phi_i^*y_n\quad\text{on }V_i,
\qquad
g_i=\Phi_i^*g_{\mathbb H}
\quad\text{on }V_i\cap\mathring M_i.
\]

Excise a half-ball neighborhood of each \(p_i\) by setting
\[
\check M_i
:=
\overline{
M_i\setminus\Phi_i^{-1}\left(\overline{B_r^+}\right)
}.
\]
The excision introduces a hemispherical boundary hypersurface
\[
\Sigma_i
:=
\Phi_i^{-1}\left(\overline{S_r^+}\right)
\subset\partial\check M_i.
\]
Consider the inversion in the Euclidean sphere of radius \(r\),
\[
I_r\colon\mathbb R^n\setminus\{0\}
\longrightarrow
\mathbb R^n\setminus\{0\},
\qquad
I_r(y)=\frac{r^2}{|y|^2}y.
\]
It preserves \(\overline{\mathbb H^n}\), and its restriction to
\(\mathbb H^n\) is an isometry of
\((\mathbb H^n,g_{\mathbb H})\). Moreover, it fixes
\(\overline{S_r^+}\) pointwise and interchanges the regions
\(|y|<r\) and \(|y|>r\). Identify \(\Sigma_1\) and
\(\Sigma_2\) by
\[
\left.
\left(\Phi_2^{-1}\circ I_r\circ\Phi_1\right)
\right|_{\Sigma_1},
\]
and denote the resulting compact manifold by \(M_{\#}\). It is diffeomorphic
to the boundary connected sum
\(M_1\mathbin{\#_{\partial}}M_2\).

A neighborhood of the identified hypersurface in \(M_{\#}\) is parametrized
by the half-annulus
\[
A_r^+:=
\left\{
y\in\overline{\mathbb H^n}\,\middle|\,
\frac{r}{2}<|y|<2r
\right\},
\]
using \(\Phi_1\) on \(\check M_1\) and \(I_r\circ\Phi_2\) on
\(\check M_2\). Since \(I_r\) is a hyperbolic isometry, both \(g_1\) and \(g_2\) are
represented by \(g_{\mathbb H}\) in this chart. Hence they fit together
smoothly to define a metric \(g_{\#}\) on \(\mathring M_{\#}\) with
negative sectional curvature (respectively, negative curvature operator).

We next construct a defining function for \(g_{\#}\). Under the
half-annular parametrization above, the outer subannulus
\(A_r^+\cap\{|y|\geq r\}\)
comes from \(\check M_1\), where \(\rho_1\) is represented by \(y_n\).
The inner subannulus
\(A_r^+\cap\{|y|\leq r\}\)
comes from \(\check M_2\), where \(\rho_2\) is represented by
\[
I_r^*y_n
=
\frac{r^2}{|y|^2}y_n.
\]
Choose a smooth function
\(
q\colon\left(r/2,2r\right)\longrightarrow(0,\infty)
\)
such that \(q(s)=1\) near \(2r\) and \(q(s)=r^2/s^2\) near \(r/2\).
Set
\[
\rho_{\#}=q(|y|)y_n
\]
on \(A_r^+\), and let \(\rho_{\#}\) agree with \(\rho_i\) away from the
half-annular neighborhood. By the choice of \(q\), these definitions agree
on the corresponding overlaps and determine a smooth function
\(\rho_{\#}\) on \(M_{\#}\).

We now verify that \((\mathring M_{\#},g_{\#})\) is AH. Away from \(A_r^+\), the pair \((g_{\#},\rho_{\#})\) agrees with one of the original pairs \((g_i,\rho_i)\), so it suffices to verify the AH conditions on \(A_r^+\). Since
\(q>0\), $\rho_\#$ is positive for \(y_n>0\) and vanishes precisely when \(y_n=0\). Along \(y_n=0\),
\[
d\rho_{\#}=q(|y|)\,dy_n\neq 0.
\]
Thus \(\rho_{\#}\) is a defining function. On \(A_r^+\),
\[
\rho_{\#}^{\,2}g_{\#}=q(|y|)^2dy^2,
\]
which extends smoothly as a Riemannian metric up to \(y_n=0\). Along \(y_n=0\),
\[
|d\rho_{\#}|_{\rho_{\#}^{\,2}g_{\#}}=1.
\]
Therefore, \(g_{\#}\) is AH.

Since \(g_{\#}\) is AH, for sufficiently small \(\delta>0\), the compact domain
\(
\{\rho_{\#}\geq\delta\}
\)
is diffeomorphic to \(M_{\#}\) and has strictly convex boundary.
Therefore, 
\(
M_1 \mathbin{\#_{\partial}} M_2\in\mathcal C_n
\)
(respectively, 
\(
M_1 \mathbin{\#_{\partial}} M_2\in\mathcal C'_n).
\)

Repeating the argument using two distinct points on the conformal boundary of a single manifold \(M\) proves that both \(\mathcal C_n\) and \(\mathcal C'_n\) are closed under boundary \(1\)-handle attachments.

Finally, a compact geodesic ball in \(\mathbb H^n\) has negative curvature
operator and strictly convex boundary, and therefore belongs to
\(\mathcal C'_n\). Since every \(n\)-dimensional \(1\)-handlebody is obtained
from \(\mathbb D^n\) by successively attaching boundary \(1\)-handles, every
such handlebody belongs to \(\mathcal C'_n\).

(ii) The sectional curvature case also follows from Anderson’s construction \cite{Anderson}, since sufficiently small tubular neighborhoods of its totally geodesic zero section have strictly convex boundary. We give a direct local proof treating both curvature conditions.

Let $\pi:E\to N$ be the bundle projection, and let $h$ be a Riemannian metric on $N$ with negative sectional curvature
(respectively, negative curvature operator). Choose a fiber metric and a compatible connection on $E$, and let $g_0$ be the associated connection metric. Thus the horizontal and vertical distributions are orthogonal with respect to $g_0$,
with metrics $\pi^*h$ and the fiber metric, respectively. Write $r(v)=|v|$ for the fiber norm, and set $f=\lambda r^2$,
where $\lambda>0$ is a constant to be chosen later. Define
\[
g_\lambda=e^{2f}g_0.
\]
Since $r^2$ is smooth on $E$, so is $g_\lambda$. Fiber reflection
$v\mapsto-v$ is an isometry of both metrics, with fixed point
set equal to the zero section. Hence the zero section is
totally geodesic for both metrics, and both induce $h$ on it.

Along the zero section, we have 
\[
TE|_N\cong TN\oplus E\qquad\text{and}\qquad f=0,\ \  df=0.
\]
Moreover, $\nabla^2_{g_0}f$ vanishes on horizontal and mixed pairs and
equals $2\lambda g_0$ on vertical pairs. The conformal curvature
formula~\cite[Theorem~1.159(b)]{Besse} gives
\[
\Rm^{g_\lambda}
=e^{2f}\left[
\Rm^{g_0}-g_0\owedge
\left(\nabla^2_{g_0}f-df\otimes df
+\frac12|df|_{g_0}^2g_0\right)
\right],
\]
where $\owedge$ denotes the Kulkarni--Nomizu product. Hence, along the zero section,
\[
\Rm^{g_\lambda}
=\Rm^{g_0}-g_0\owedge\nabla^2_{g_0}f.
\]
Using the orthogonal decomposition
\[
\wedge^2TE|_N
=\wedge^2TN\oplus(TN\wedge E)\oplus\wedge^2E,
\]
write $\varphi=\alpha+\beta+\gamma$. Then
\begin{equation}\label{eq:disk-bundle-curvature}
\Rm^{g_\lambda}(\varphi,\varphi)
=\Rm^{g_0}(\varphi,\varphi)
-2\lambda|\beta|_{g_0}^2-4\lambda|\gamma|_{g_0}^2.
\end{equation}
In the sectional curvature case, we restrict to decomposable bivectors \(\varphi\), whose horizontal components \(\alpha\) are also decomposable.

Since the zero section is totally geodesic, the Gauss equation gives
\[
\left.\Rm^{g_0}\right|_{\wedge^2TN}=\Rm^h.
\]
By compactness of $N$ and the curvature assumption
on $h$, there exists $c>0$ such that
\[
\Rm^{g_0}(\alpha,\alpha)
=\Rm^h(\alpha,\alpha)
\le -2c|\alpha|_{g_0}^2.
\]
The remaining curvature coefficients of $g_0$ are uniformly
bounded along $N$. Applying Young's inequality to absorb the
cross terms, we obtain a constant $C>0$, independent of $\lambda$,
such that
\[
\Rm^{g_0}(\varphi,\varphi)
\le -c|\alpha|_{g_0}^2
+C\bigl(|\beta|_{g_0}^2+|\gamma|_{g_0}^2\bigr).
\]
Together with~\eqref{eq:disk-bundle-curvature}, this gives
\[
\Rm^{g_\lambda}(\varphi,\varphi)
\le -c|\alpha|_{g_0}^2
+(C-2\lambda)|\beta|_{g_0}^2
+(C-4\lambda)|\gamma|_{g_0}^2.
\]
Fix $\lambda>C/2$. Then $g_\lambda$ satisfies the desired
curvature condition along $N$. By compactness and continuity, the same curvature condition holds on
\(
D_\varepsilon(E)=\{r\leq\varepsilon\}
\)
for sufficiently small $\varepsilon>0$.

It remains to verify boundary convexity. For $g_0$, the inward unit normal to
$\partial D_\varepsilon(E)$ is $-\partial_r$. Its second fundamental form
$\secf_{g_0}$ vanishes on horizontal and mixed pairs and equals
$\varepsilon^{-1}g_0$ on vertical tangent vectors. In particular,
$\secf_{g_0}\geq0$. The conformal transformation
formula~\cite[(1.163)]{Besse} yields
\[
\secf_{g_\lambda}=e^{\lambda\varepsilon^2}\left(\secf_{g_0}+2\lambda\varepsilon\,g_0|_{T\partial D_\varepsilon(E)}\right)>0.
\]
Pulling back \(g_\lambda\) to \(D(E)\) by fiberwise dilation completes the proof.
\end{proof}

\subsection{Application to isometric realization}\label{realization}
In this subsection, we prove the following realization theorem:
\begin{theorem}[Restatement of Theorem \ref{Thm: realization}]\label{Thm: realization'}
Let \(M\in\mathcal C_n\) (respectively, \(\mathcal C'_n\)). For every smooth metric \(\gamma\) on \(\partial M\), there exist an asymptotically hyperbolic metric \(g\) on \(\mathring M\) with negative sectional curvature (respectively, negative curvature operator) and a smooth isometric embedding
\(
X\colon(\partial M,\gamma)\hookrightarrow(\mathring M,g).
\)
Moreover, the second fundamental form of \(X(\partial M)\) with respect to the inward normal is positive definite.
\end{theorem}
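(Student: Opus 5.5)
The proof is a direct combination of Theorem \ref{Thm: main 2'} and Theorem \ref{Thm:conformally compact'}, applied in that order.

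First, since \(M\in\mathcal C_n\) (respectively, \(\mathcal C'_n\)), Theorem \ref{Thm: main 2'} gives a smooth metric \(g_1\) on \(M\) with negative sectional curvature (respectively, negative curvature operator). It satisfies \(g_1|_{\partial M}=\gamma\), and \(\partial M\) is strictly convex and umbilical with respect to the inward normal. In particular, the second fundamental form \(\secf^{g_1}\) of \(\partial M\) with respect to the inward unit normal is positive definite. Hence the boundary index is \(0\le 1\) everywhere, so \((M,g_1)\) satisfies the hypotheses of Theorem \ref{Thm:conformally compact'}.

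Next, apply Theorem \ref{Thm:conformally compact'} to \((M,g_1)\). The conformal infinity is irrelevant here, so one may take it to be \([\gamma]\) or any other class. This produces a complete asymptotically hyperbolic metric \(g\) on \(\mathring M\) with the same curvature condition, together with an isometric embedding \(Y\colon(M,g_1)\hookrightarrow(\mathring M,g)\). Set \(X:=Y|_{\partial M}\). Then \(X^*g=g_1|_{\partial M}=\gamma\), so \(X\) is a smooth isometric embedding of \((\partial M,\gamma)\). Since \(Y\) is a smooth embedding of a compact manifold with boundary into the interior, \(X(\partial M)=Y(\partial M)\) is a smooth closed hypersurface in \(\mathring M\).

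The only step needing care is convexity, namely matching the notion of ``inward normal'' for \(X(\partial M)\) with the one for \(\partial M\subset(M,g_1)\). Since \(Y\) is a local isometry onto its image \(Y(M)\), and \(Y(M)\) is a compact domain in \(\mathring M\) bounded by \(X(\partial M)\), the second fundamental form of \(X(\partial M)\) computed with respect to the normal pointing into \(Y(M)\) equals \(\secf^{g_1}\). This uses only the side \(Y(M)\), since the second fundamental form depends only on the metric on one side up to first order. It is therefore positive definite.

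It remains to check that this normal is the ``inward'' one. In the construction of Theorem \ref{Thm:conformally compact'}, \(Y(M)\) is the compact core and the AH end lies on the other side. So the normal pointing into \(Y(M)\) is the natural inward normal of the compact region enclosed by \(X(\partial M)\). No further computation is needed, and I expect no genuine obstacle beyond this bookkeeping.
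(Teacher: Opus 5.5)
Your proposal is correct and follows the same route as the paper: apply Theorem \ref{Thm:main 2} to obtain a negatively curved metric on $M$ with boundary metric $\gamma$ and strictly convex boundary, then apply Theorem \ref{Thm:conformally compact} to that metric and restrict the resulting isometric embedding to $\partial M$. Your extra check on which normal is the inward one spells out a point the paper leaves implicit.
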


\begin{proof}
By Theorem \(\ref{Thm:main 2}\), there exists a smooth metric \(g_0\) on \(M\) with negative sectional curvature (respectively, negative curvature operator) such that \(g_0|_{\partial M}=\gamma\) and \(\partial M\) is strictly convex with respect to the inward unit normal. Applying Theorem \(\ref{Thm:conformally compact}\) to \((M,g_0)\), we obtain a complete AH metric \(\tilde g\) on \(\mathring M\) with negative sectional curvature (respectively, negative curvature operator) and an isometric embedding
\[
X\colon(M,g_0)\hookrightarrow(\mathring M,\tilde g).
\]
Then the restriction of \(X\) to \(\partial M\) is the required isometric embedding. 
\end{proof}

\begin{remark}\label{can'tsimconnec}
In general, one cannot expect the target Riemannian manifold to be simply connected. For example, let \((\Sigma,\gamma)\) be a closed orientable surface with nonpositive sectional curvature. Suppose that \((\Sigma,\gamma)\) admits an isometric embedding into a complete, simply connected Riemannian \(3\)-manifold \((M,g)\) with nonpositive sectional curvature. By the classical Cartan–Hadamard theorem, \(M\) is diffeomorphic to \(\mathbb R^3\). Identifying \(\Sigma\) with its image, let \(M_{\mathrm{ext}}\) denote the closure of the unbounded component of \(M\setminus\Sigma\). Both the interior and the boundary of \(M_{\mathrm{ext}}\) have nonpositive sectional curvature, and its intrinsic length metric is complete. Theorems 5.2 and 5.3 therefore imply that its universal cover is contractible, and hence \(\pi_2(M_{\mathrm{ext}})=0\). However, \(M_{\mathrm{ext}}\) retracts onto a sufficiently large sphere enclosing \(\Sigma\), so \(\pi_2(M_{\mathrm{ext}})\neq0\), a contradiction.
\end{remark}

\begin{theorem}[Restatement of Theorem \ref{Thm: sphererealization}]
For every smooth metric \(\gamma\) on \(\mathbb S^n\), there exist a complete metric \(g\) on \(\mathbb R^{n+1}\) with negative curvature operator and a smooth isometric embedding
\(
X\colon(\mathbb S^n,\gamma)\hookrightarrow(\mathbb R^{n+1},g).
\)
Moreover, \(g\) has constant negative sectional curvature outside a compact set, and the second fundamental form of \(X(\mathbb S^n)\) with respect to the inward normal is positive definite. 
\end{theorem}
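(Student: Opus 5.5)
Apply the machinery with $M=\mathbb D^{n+1}$, which lies in $\mathcal C'_{n+1}$ since a compact geodesic ball in $\mathbb H^{n+1}$ has constant curvature $-1$ (hence negative curvature operator) and strictly convex boundary. By Theorem \ref{Thm:main 2}, $\gamma$ on $\partial\mathbb D^{n+1}=\mathbb S^n$ extends to a metric $g_0$ on $\mathbb D^{n+1}$ with negative curvature operator and strictly convex umbilical boundary. The open manifold will be $\mathring{\mathbb D}^{n+1}\cong\mathbb R^{n+1}$, and $X$ will be the restriction to $\partial\mathbb D^{n+1}$ of an isometric embedding of $(\mathbb D^{n+1},g_0)$ into it.

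The only new requirement beyond Theorem \ref{Thm: realization} is that $g$ be exactly hyperbolic outside a compact set. I would get this from Lemma \ref{Lem: improve conformal}, run globally rather than near a single point. Apply Theorem \ref{Thm:conformally compact'} to $(\mathbb D^{n+1},g_0)$ with prescribed conformal infinity the round class $[\gamma_{\mathbb S^n}]$. On the end $\mathbb S^n\times[0,\infty)$, use the warping function $f_R(t)=(a+\lambda)e^t-a\eta(t/R)e^{-t}$ from the lemma. The estimates in the proof of Lemma \ref{Lem: improve conformal} give $f_R''>0$ and $f_R'\ge 2a+\lambda$ uniformly, together with the bound $(2a+\lambda)^2>C_2$, where $C_2$ bounds the curvature of the round metric. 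So the end has negative curvature operator, and the gluing data at $t=0$ are unchanged. The corner smoothing via Theorem \ref{thm: smoothing sec} and Proposition \ref{practical_use} then applies as before.

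For $t\ge R$ the metric is $dt^2+(a+\lambda)^2e^{2t}\gamma_{\mathbb S^n}$. This is not exactly hyperbolic, since the hyperbolic cusp-free end over a round sphere is $dt^2+\sinh^2(t+c)\gamma_{\mathbb S^n}$. The main obstacle is therefore to pick the warping function so that it is eventually $\sinh(t+c)$ while keeping the curvature operator negative. For a warped product $dt^2+f^2\gamma_{\mathbb S^n}$ (unit round sphere), the curvature operator is diagonal, with eigenvalues $-f''/f$ on mixed planes and $(1-f'^2)/f^2$ on tangential planes. Negativity thus needs exactly $f''>0$ and $f'>1$.

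I would take $f(t)=A\sinh(t+c)$ near $t=0$, choose $A$ and $c$ to match $f(0)=\lambda$ with $f'(0)$ large (needed for $\secf^++\secf^-<0$ and for the tangential term), and interpolate to $\sinh(t+c')$ for large $t$. Concretely, I would set $f=\sinh(t+c)+\phi(t)$ with a correction $\phi$ that is large near $0$, has $\phi''\ge 0$ and $\phi'\ge 0$, and decays to zero. A cleaner variant is to let $f'$ be any increasing function $\ge 2a+\lambda>1$ that eventually equals $\cosh(t+c')$. Then $f=\lambda+\int_0^t f'$, with $c'$ chosen so that $f$ eventually equals $\sinh(t+c')$; this amounts to choosing the constant of integration, which is possible since $\sinh(t+c')$ grows without bound. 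Monotonicity of $f'$ gives $f''\ge 0$, and strictness can be arranged. Near $t=0$ the second fundamental form is still $-\lambda f'(0)\gamma$, so the smoothing is unaffected.

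The resulting manifold is $\mathring{\mathbb D}^{n+1}\cong\mathbb R^{n+1}$, complete because the end is complete. It has negative curvature operator, and outside the compact set $\{t\le T\}$ it is $dt^2+\sinh^2(t+c')\gamma_{\mathbb S^n}$, which is hyperbolic of curvature $-1$. The embedding of $(\mathbb D^{n+1},g_0)$ restricted to the boundary sphere is isometric for $\gamma$, with positive definite inward second fundamental form by the strict convexity from Theorem \ref{Thm:main 2}.
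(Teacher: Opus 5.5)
Your route is correct in outline but differs from the paper's, and one step needs repair.

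\textbf{How the two routes differ.} The paper leaves the warping function alone. It keeps the end $dt^2+f^2\gamma_{\rm std}$ from the proof of Theorem~\ref{Thm:conformally compact'} and truncates it at $t=1$. It then glues on the exterior of a geodesic ball in the space form of curvature $-\kappa^2$, with the same boundary metric $f(1)^2\gamma_{\rm std}$.

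Along that end $f''=f$, so $f'^2-f^2=4a(a+\lambda)$ is conserved. For the large $a$ used there this exceeds $1$, so the end never becomes a curvature $-1$ hyperbolic end. This is why the paper takes $\kappa$ large: it makes $f(1)f'(1)-f(1)\sqrt{1+\kappa^2f(1)^2}<0$. The paper then smooths this second corner by Theorem~\ref{thm: smoothing sec}, using that the warped side is convex.

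You instead redesign the warping function so that $f'^2-f^2$ is driven down to $1$ while keeping $f''>0$ and $f'>1$. By your correct diagonal computation, these two conditions are exactly negativity of the curvature operator. Your route avoids the second gluing and smoothing and gives curvature exactly $-1$ at infinity. The paper's route needs no new ODE design; it only reuses its existing construction and smoothing theorem as black boxes.

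\textbf{The matching step.} Your first idea cannot work: $\phi'\ge0$, $\phi''\ge0$ and $\phi\to0$ force $\phi\equiv0$.

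In the second variant, the constant of integration is not free, because it is fixed by $f(0)=\lambda$. Take a generic increasing $f'$ that equals $\cosh(t+c')$ for $t\ge T$. For $t\ge T$ you then get $f=\sinh(t+c')+K$ with $K=\lambda+\int_0^Tf'-\sinh(T+c')$, and $K\neq0$ is not hyperbolic. So you must choose $f'$ on $[0,T]$ so that $\int_0^Tf'=\sinh(T+c')-\lambda$.

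This can be done. Let $\mu=f'(0)$, take $T=1$, and take $s=1+c'$ so large that $\sinh s>\mu+\lambda$. Then $\mu<\sinh s-\lambda<\cosh s$. Consider smooth functions on $[0,1]$ with positive derivative, value $\mu$ at $0$, and equal to $\cosh(t+c')$ on $[1-\delta,1]$; note $t+c'>0$ there. These form a convex set, and the integral is linear on it. So the attainable values of $\int_0^1 f'$ form an interval, and this interval exhausts $(\mu,\cosh s)$ as $\delta\to0$. Hence the required value $\sinh s-\lambda$ is attained. With this fix your argument is complete.
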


\begin{proof}
By Theorem \(\ref{Thm:main 2}\), there exists a smooth metric \(g_0\) on \(\mathbb D^{n+1}\) with negative curvature operator such that \(g_0|_{\partial \mathbb D^{n+1}}=\gamma\) and \(\partial \mathbb D^{n+1}\) is strictly convex with respect to the inward unit normal. Let $\gamma_{\rm std}$ denote the standard metric on $\mathbb S^n$. Applying the construction in the proof of Theorem \ref{Thm:conformally compact'} to \((\mathbb D^{n+1},g_0)\), with \(\gamma_{\rm std}\) prescribed at conformal infinity, we obtain a complete metric \(\tilde g\) on \(\mathbb R^{n+1}\) with negative curvature operator and an isometric embedding
\[
X\colon(\mathbb D^{n+1},g_0)\hookrightarrow(\mathbb R^{n+1},\tilde g).
\]
Moreover, outside a compact set containing \(X(\mathbb D^{n+1})\), the manifold is identified with \(\mathbb S^n\times[0,\infty)\), and
\[
\tilde g=dt^2+f(t)^2\gamma_{\rm std},
\]
where $f>0$ and $f'>0$.

For \(\kappa>0\), choose \(r_\kappa>0\) such that
\[
\frac{1}{\kappa}\sinh(\kappa r_\kappa)=f(1).
\]
Truncate the warped-product end along \(\mathbb S^n\times\{1\}\) and attach the exterior of the geodesic ball of radius \(r_\kappa\) in the \((n+1)\)-dimensional hyperbolic space of curvature \(-\kappa^2\). The two boundaries have the same induced metric \(f(1)^2\gamma_{\rm std}\). With respect to the inward unit normals on the two sides, their second fundamental forms are
\[
f(1)f'(1)\gamma_{\rm std}
\qquad\text{and}\qquad
-f(1)\sqrt{1+\kappa^2f(1)^2}\gamma_{\rm std},
\]
respectively. For sufficiently large \(\kappa\), their sum is negative definite. 

Since the first form is positive definite, Theorem \ref{thm: smoothing sec} applies. Choosing the smoothing parameter sufficiently small, we obtain a smooth metric \(g\) with negative curvature operator on the resulting manifold, which is diffeomorphic to \(\mathbb R^{n+1}\). Moreover, $g$ agrees with the hyperbolic metric of curvature \(-\kappa^2\) outside a compact set. Since the smoothing takes place away from \(X(\mathbb D^{n+1})\), the restriction \(X|_{\partial\mathbb D^{n+1}}\) is the required isometric embedding, and its second fundamental form remains positive definite.
\end{proof}

\section*{Statements and Declarations}

\noindent\textbf{Funding.}
The first author was partially supported by the National Natural Science Foundation of China (Grant Nos. 12471054 and 12001292) and the Fundamental Research Funds for the Central Universities, Nankai University (Grant No. 050-63263075). The second author was partially supported by National Key R\&D Program of China 2023YFA1009900, NSFC grant 12401072, and the start-up fund from Westlake University.
\medskip

\noindent\textbf{Competing Interests.}
The authors declare that they have no competing interests.

\medskip

\noindent\textbf{Data Availability.}
Data sharing is not applicable to this article, as no datasets were generated or analyzed during the current study.

\end{document}